\definecolor{cobalt}{RGB}{61,89,171}
\definecolor{blue3}{RGB}{0,0,205}
\newcommand{\Lie}{\mathcal{L}}      
\newcommand{\lvec}[1]{\overleftarrow{#1}}
\theoremstyle{plain}
\newtheorem{theorem}{Theorem}[section]
\newtheorem*{theorem*}{Theorem}
\newtheorem{proposition}[theorem]{Proposition}
\newtheorem{lemma}[theorem]{Lemma}
\newtheorem{corollary}[theorem]{Corollary}
\theoremstyle{definition}
\newtheorem{definition}[theorem]{Definition}
\newtheorem{example}[theorem]{Example}
\theoremstyle{remark}
\newtheorem{remark}[theorem]{Remark}
\newcommand{\bN}{\mathbb{N}}
\newcommand{\bR}{\mathbb{R}}
\newcommand{\bZ}{\mathbb{Z}}
\newcommand{\fX}{\mathfrak{X}}
\newcommand{\bbeta}{\bar{\beta}}
\newcommand{\btheta}{\bar{\theta}}
\newcommand{\bphi}{\overline{\phi}}
\newcommand{\tphi}{\widetilde{\phi}}
\newcommand{\tpsi}{\widetilde{\psi}}
\newcommand{\tchi}{\widetilde{\chi}}
\newcommand{\bomega}{\bar{\omega}}
\newcommand{\cB}{\mathcal{B}}
\newcommand{\cF}{\mathcal{F}}
\newcommand{\cL}{\mathcal{L}}
\newcommand{\cM}{\mathcal{M}}
\newcommand{\tcM}{\widetilde{\cM}}
\newcommand{\bcM}{\overline{\cM}}
\newcommand{\fe}{\mathsf{e}}
\renewcommand{\d}{\delta}
\newcommand{\cext}{\circledcirc}
\newcommand{\fg}{\mathfrak{g}}
\newcommand{\fh}{\mathfrak{h}}
\newcommand{\fk}{\mathfrak{k}}
\newcommand{\fs}{\mathfrak{s}}
\newcommand{\la}{\langle}
\newcommand{\ran}{\rangle}
\DeclareMathOperator{\Aut}{Aut}
\DeclareMathOperator{\Id}{Id}
\begin{document}

\title{On locally conformal symplectic manifolds of the first kind}
\author{Giovanni Bazzoni$^*$ \quad Juan Carlos Marrero$^\dagger$
\\[0.5cm]
 \small\texttt{\href{mailto:bazzoni@math.lmu.de}{bazzoni@math.lmu.de}} \qquad
  \small\texttt{\href{mailto:jcmarrer@ull.edu.es}{jcmarrer@ull.edu.es}}\\[0.1cm]
  {\normalsize\slshape $^*$Mathematisches Institut der Ludwig-Maximilians-Universit\"at M\"unchen, }\\[-0.1cm]
  {\normalsize\slshape Theresienstra\ss e 39, 80333 M\"unchen}\\[-0.1cm]
  {\normalsize\slshape $^\dagger$Universidad de La Laguna, Facultad de Ciencias,}\\[-0.1cm]
  {\normalsize\slshape Dpto. de Matem\'aticas, Estad{\'\i}stica e IO}\\[-0.1cm]
  {\normalsize\slshape Avda. Astrof\'isico Francisco S\'anchez s/n. 38071, La Laguna}\\[-0.1cm]
}
\date{}
\maketitle

\begin{abstract}
We present some examples of locally conformal symplectic structures of the first kind on compact nilmanifolds which do not admit Vaisman metrics. One of these examples does not admit locally 
conformal K\"ahler metrics and all the structures come from left-invariant locally conformal symplectic structures on the corresponding nilpotent Lie groups.
Under certain topological restrictions related with the compactness of the canonical foliation, we prove a structure theorem for locally conformal symplectic manifolds of the first kind. In the 
non compact case, we show that they are the product of a real line with a compact contact manifold and, in the compact case, we obtain that they are mapping tori of compact contact manifolds by strict 
contactomorphisms. Motivated by the aforementioned examples, we also study left-invariant locally conformal symplectic structures on Lie groups. In particular, we obtain a complete description of these structures 
(with non-zero Lee $1$-form) on connected simply connected nilpotent Lie groups in terms of locally conformal symplectic extensions and symplectic double extensions of symplectic nilpotent Lie groups. In order to obtain this description, we study locally conformal symplectic structures of the first kind on Lie algebras. 
\end{abstract}

\vskip1pt{\small\textsf{\textbf{MSC classification [2010]}}: 22E25, 22E60, 53C12, 53D05, 
53D10, 53C55}\vskip0.5pt
\vskip1pt{\small\textsf{\textbf{Key words}}: Locally conformal symplectic structures, contact structures, mapping torus, Lie algebras, Lie groups, nilpotent Lie algebras, nilpotent Lie groups, 
compact nilmanifolds, locally conformal K\"ahler metrics, Vaisman metrics.}\vskip10pt

\tableofcontents

%
%
\section{Introduction}

A locally conformal symplectic structure on a manifold $M$ consists of a pair $(\Phi,\omega)$, where $\Phi$ and $\omega$ are a $2$-form and a $1$-form, respectively, with $\Phi$ non degenerate, $\omega$ closed, 
subject to the equation $d\Phi=\omega\wedge\Phi$. $\omega$ is known as the Lee form. This implies that, locally, $\Phi$ is conformal to a genuine symplectic form, hence the name. If $\omega=d f$, 
then the global conformal change $\Phi\mapsto e^{-f}\Phi$ endows $M$ with a symplectic structure. We use the name globally conformal symplectic structure in this case. Notice that a manifold 
endowed with a locally conformal symplectic structure is orientable and almost complex. A locally conformal symplectic manifold is one endowed with a locally conformal symplectic structure.

Locally conformal symplectic structures were introduced by Lee in \cite{Lee} and then studied extensively by Vaisman (see \cite{VaismanF}), Banyaga (see \cite{Banyaga1,Banyaga2,Banyaga3}) 
and many others (see \cite{BK1,GL,LV,MMTP}). Vaisman pointed out in \cite[Section 1]{VaismanF} that locally conformal symplectic structures play an important role in Hamiltonian mechanics, 
generalizing the usual description of the phase space in terms of symplectic geometry.

In this paper we are mainly concerned with locally conformal symplectic structures of the first kind; this means that there exists an automorphism $U$ of $(\Phi,\omega)$ with $\omega(U)=1$; $U$ is 
then called an anti-Lee vector field. This is equivalent to the existence of a 1-form $\eta$, with $d\eta$ of rank $2n-2$, such that $\Phi=d\eta-\omega\wedge\eta$; here $2n=\dim M$. In order 
to obtain examples of locally conformal symplectic manifolds, one can take the product of a contact manifold and an interval; more generally, locally conformal symplectic structures of the first 
kind exist on the suspension (or mapping torus) of a strict contactomorphism\footnote{In this paper we will restrict to contact \emph{forms} rather than contact structures. Consequently, 
our morphisms will be \emph{strict} contactomorphisms, i.e.\ contactomorphisms preserving the contact form - see Definition \ref{contact_isomorphism}.} of a contact manifold. 
We refer to Section \ref{section:Preliminaries} for all the details. Banyaga proved a sort of converse to this result: in \cite[Theorem 2]{Banyaga1}, he showed that a compact manifold endowed with a locally conformal 
symplectic structure of the first kind fibres over the circle and that the fibre inherits a contact structure. This result, which shows an interplay between locally conformal symplectic geometry and contact 
geometry, is a non-metric version of a result of Ornea and Verbitski. In fact, in \cite{OV1}, they proved that compact Vaisman manifolds (see the definition below) are diffeomorphic to mapping tori with Sasakian fiber.

Our first result is the following (see Theorem \ref{global-descrip-lcs}):

\noindent {\bf Theorem A}
\textit{Let $M$ be a connected manifold endowed with a locally conformal symplectic structure of the first kind $(\Phi,\omega)$, where $\Phi=d\eta-\omega\wedge\eta$. Assume that the anti-Lee vector field $U$ 
is complete and that the foliation $\mathcal{F}=\{\omega=0\}$ has a compact leaf $L$. Denote by $\Psi$ the flow of $U$ and write $\eta_L=i^*\eta$, where $i\colon L\to M$ is the canonical inclusion. 
Then we have two possibilities:
\begin{enumerate}
\item $(\Phi,\omega)$ is a globally conformal symplectic structure on $M$ and $\Psi\colon L\times \bR\to M$ is an isomorphism of globally conformal symplectic manifolds;
\item there exist a real number $c>0$ and a strict contactomorphism $\phi\colon L \to L$ such that $\Psi$ induces an isomorphism between the locally conformal symplectic manifold of the first kind 
$L_{(\phi, c)}$ and $M$. In particular, $M$ is compact.
\end{enumerate}
In both cases, each leaf of $\mathcal{F}$ is of the form $\Psi_t(L)$ for some $t\in\bR$ and $\Psi_t\big|_L\colon L\to\Psi_t(L)$ is a strict contactomorphism.}

We refer to the discussion after the proof of Theorem \ref{global-descrip-lcs} (in Section \ref{lcs-can-fol}) for a comparison between Banyaga's result and the previous theorem.

Continuing our parallel between contact and locally conformal symplectic structures of the first kind, a theorem of Martinet (see \cite{Mar}) asserts that every oriented closed manifold of 
dimension $3$ has a contact form. Hence every orientable closed 3-manifold admits a contact structure. We provide a Martinet-type result for locally conformal symplectic structures of 
the first kind on 4-manifolds.

In fact, if $M$ is an oriented connected manifold of dimension $4$, $\omega$ is a closed $1$-form on $M$ without singularities and $L$ is a compact leaf of the foliation $\cF=\{\omega = 0\}$ then we obtain sufficient 
conditions for $M$ to admit a locally conformal symplectic structure of the first kind (see Corollary \ref{Martinet_type}).

Locally conformal symplectic structures also appear in the context of almost Hermitian geometry. An almost Hermitian structure on a manifold
$M$ of dimension $2n$ consists of a Riemannian metric $g$ and a compatible almost complex structure $J$, that is, an endomorphism $J\colon TM\to TM$ with $J^2=-\mathrm{Id}$, such
that $g(JX,JY)=g(X,Y)$ for every $X,Y\in\fX(M)$. If $(g,J)$ is an almost Hermitian structure on $M$, one can consider the following tensors:
\begin{itemize}
 \item a 2-form $\Phi$, the K\"ahler form, defined by $\Phi(X,Y)=g(JX,Y)$, $X,Y\in\fX(M)$;
 \item a 1-form $\omega$, the Lee form, defined by $\omega(X)=\frac{-1}{n-1}\delta\Phi(JX)$, where $\delta$ is the codifferential and $X\in\fX(M)$.
\end{itemize}

In \cite{GH}, Gray and Hervella classified almost Hermitian structures $(g,J)$ by studying the covariant derivative of the K\"ahler form $\Phi$ with respect to the Levi-Civita connection 
$\nabla$ of $g$. Of particular interest for us are the following classes:
\begin{itemize}
 \item the class of K\"ahler structures, for which $\nabla\Phi=0$. In this case, $g$ is said to be a K\"ahler metric. A K\"ahler manifold is a 
manifold endowed with a K\"ahler structure. The Lee form is zero on a K\"ahler manifold. We refer to \cite{Hu} for an introduction to K\"ahler geometry.
 \item the class of locally conformal K\"ahler (lcK) structures, for which $d\Phi=\omega\wedge\Phi$; in this case we call $g$ a locally conformal K\"ahler metric. A locally conformal K\"ahler manifold is a 
manifold endowed with a locally conformal K\"ahler structure. Locally conformal K\"ahler manifolds were introduced by Vaisman in \cite{VaismanA}. A remarkable example of locally conformal K\"ahler manifold is the Hopf surface. 
The study of locally conformal K\"ahler metrics on compact complex surfaces was undertaken in \cite{Belgun}. Homogeneous locally conformal K\"ahler and Vaisman structures have been studied in \cite{ACHK,GMO,HK,HK2}. 
The standard reference for locally conformal K\"ahler geometry is \cite{DO}; see also the recent survey \cite{OV}.
\end{itemize}

In the K\"ahler and locally conformal K\"ahler case, the almost complex structure $J$ is integrable, hence these are complex manifolds. One can interpret K\"ahler structure as a 
``degenerate'' case of locally conformal K\"ahler structures. Indeed, it turns out that the Lee form of a locally conformal K\"ahler structure is closed; if it is exact, then one can show that there 
is a K\"ahler metric in the conformal class of $g$. In such a case, one says that the structure is globally conformal K\"ahler. In general, if a manifold admits a genuine locally conformal K\"ahler structure, 
i.e.\ one for which the Lee form is not exact, then it admits an open cover such that the Lee form is exact on each open set, hence the locally conformal K\"ahler metric is locally conformal 
to a K\"ahler metric.

The non-metric version of K\"ahler manifolds are symplectic manifolds (see \cite{McDS}). A manifold $M^{2n}$ is symplectic if there exists a 2-form $\Phi$ such that
$d\Phi=0$ and $\Phi^n$ is a volume form. Clearly, the K\"ahler form of a K\"ahler structure is symplectic. It is well known that the existence of a K\"ahler metric on a compact manifold deeply 
influences its topology. In fact, suppose $M$ is a compact K\"ahler manifold of dimension $2n$ and let $\Phi$ be the K\"ahler form; then:
\begin{itemize}
 \item the odd Betti numbers of $M$ are even;
 \item the Lefschetz map $H^{p}(M)\to H^{2n-p}(M)$, $[\alpha]\mapsto [\alpha\wedge\Phi^{n-p}]$ is an isomorphism for $0\leq p\leq n$;
 \item $M$ is formal.
\end{itemize}

For a long time, the only known examples of symplectic manifolds came from K\"ahler (or even projective) geometry. In \cite{Th}, Thurston constructed the first example of a compact, symplectic
4-manifold which violates each of the three conditions given above. Since then, the problem of constructing compact symplectic manifolds without
K\"ahler structures has inspired beautiful Mathematics (see, for instance, \cite{FM,Go,McD,OT}).

Following this train of thought, the K\"ahler form and the Lee form of a locally conformal K\"ahler structure define a locally conformal symplectic structure. In contrast to the K\"ahler 
case, however, not much is known about the topology of compact locally conformal K\"ahler manifolds. In \cite[Conjecture 2.1]{DO} it was 
conjectured that a compact locally conformal K\"ahler manifold which satisfies the topological restrictions of a K\"ahler manifold admits a (global) K\"ahler metric. A stronger conjecture (see 
\cite{VaismanD}) is that a compact locally conformal K\"ahler manifold which is not globally conformal K\"ahler must have at least one odd degree Betti number which is odd. It was shown in 
\cite{BMO,KS,VaismanE} that a compact Vaisman manifold, i.e.\ a locally conformal K\"ahler manifold with non-zero parallel Lee form (see \cite{VaismanB}), has odd $b_1$. Hence the stronger conjecture holds for Vaisman 
structures. However, it does not hold in general: in \cite{OeTo}, Oeljeklaus and Toma 
constructed a locally conformal K\"ahler manifold of complex dimension 3 with $b_1=b_5=2$, $b_0=b_2=b_4=b_6=1$ and $b_3=0$.

In \cite{OV}, the authors proposed the following problem:
\begin{quote}
\emph{Is there a compact manifold with locally conformal symplectic structures but no locally conformal K\"ahler metric?}
\end{quote}

A first answer to this question was given by Bande and Kotschik: in \cite{Bande-Kotschick} they described a locally conformal symplectic structure on a 4-manifold of the form $M\times S^1$, which 
does not carry any complex structure, hence no locally conformal K\"ahler metric. 

The second goal of this paper is to give a different answer to the question above. In fact, we prove the following result (see Corollary \ref{Main:3}):

\noindent{\bf Theorem B}
\textit{There exists a compact, 4-dimensional nilmanifold, not diffeomorphic to the product of a compact 3-manifold and a circle, which has a locally conformal symplectic structure but no locally conformal 
K\"ahler metric.}

This result is contained in the preprint \cite{BMA}, of which this paper represents a substantial expansion. The example of Thurston that we mentioned above is also a nilmanifold. This makes the parallel between the 
symplectic and the locally conformal symplectic case particularly transparent. The product of the Heisenberg manifold and the real line admits a compact quotient, which turns out to be a nilmanifold which admits a Vaisman 
structure. It was conjectured by Ugarte in \cite{U} that this is basically the only possibility, i.e.\ that a compact nilmanifold endowed with a locally conformal K\"ahler structure with non-zero Lee form is a compact quotient 
of the Heisenberg group multiplied by $\bR$. This conjecture was proved by Sawai in \cite{Sawai}, assuming that the locally conformal K\"ahler structure has a left-invariant complex structure, but remains open in general. 
In Section \ref{sec:examples} we provide different examples of compact nilmanifolds with locally conformal symplectic structures.

On the other hand, in last years, special attention has been devoted to the study of symplectic Lie algebras (see \cite{BaCo,DaMe,LiMe,MeRe,Ovando}). In particular, in \cite{MeRe} (see also \cite{DaMe}), the authors introduce 
the notion of a symplectic double extension of a symplectic Lie algebra. In fact, if $\fs_1$ is a symplectic Lie algebra of dimension $2n-2$ then, in the presence of a derivation on $\fs_1$ and an element of $\fs_1$ which satisfy 
certain conditions, one may produce a new symplectic Lie algebra of dimension $2n$. In addition, in \cite{MeRe} (see also \cite{DaMe}), the authors prove a very interesting result: {\em every symplectic nilpotent Lie algebra of 
dimension $2n$ may be obtained as a sequence of $(n-1)$ symplectic double extensions from the abelian Lie algebra $\mathbb{R}^2$}.

We remark that a symplectic structure on a Lie algebra $\fs$ induces a left-invariant symplectic structure on a Lie group with Lie algebra $\fs$ and, conversely, a left-invariant symplectic structure on a Lie group induces 
a symplectic structure on its Lie algebra. Furthermore, symplectic structures may be considered as locally conformal symplectic structures with zero Lee $1$-form. In addition, all the locally conformal symplectic 
structures on the examples of compact nilmanifolds in Section \ref{sec:examples} come from left-invariant locally conformal symplectic structures (with non-zero Lee $1$-form) on connected simply connected Lie groups. 

Therefore, the following problem we tackle in this paper is the study of left-invariant locally conformal symplectic structures on Lie groups with non-zero Lee $1$-form and, more precisely, left-invariant locally conformal 
symplectic structures of the first kind. In this direction, our first result relates locally conformal symplectic Lie groups of the first kind with contact Lie groups 
(that is, Lie groups endowed with left-invariant contact structures). In fact, we prove that (see Section \ref{relacion-contact}):

\noindent{\bf Theorem C}
\textit{The extension of a contact Lie group $H$ by a contact representation of the abelian Lie group $\mathbb{R}$ on $H$ is a locally conformal symplectic Lie group of the first kind. Conversely, every connected simply 
connected locally conformal symplectic group of the first kind is an extension of a connected simply connected contact Lie group $H$ by a contact representation of $\mathbb{R}$ on $H$.}

We also introduce the definition of a locally conformal symplectic extension of a symplectic Lie group $S$ by a symplectic $2$-cocycle and a symplectic representation of $\mathbb{R}$ on $S$ and, then, we 
prove the following result (see Section \ref{rel_symplectic}):

\noindent{\bf Theorem D}
\textit{The locally conformal symplectic extension of a symplectic Lie group $S$ of dimension $2n$ is a locally conformal symplectic Lie group of the first kind with bi-invariant Lee vector field and dimension $2n+2$. 
Conversely, every connected simply connected locally conformal symplectic Lie group of the first kind with bi-invariant Lee vector field is the locally conformal symplectic extension of a connected simply connected symplectic 
Lie group.}

The last part of the paper is devoted to the study of locally conformal symplectic nilpotent Lie groups with non-zero Lee $1$-form. We completely describe the nature of these Lie groups in terms of locally conformal symplectic 
extensions and double symplectic extensions. In fact, we prove the following result (see Theorem \ref{structure_nilpotent}):

\noindent{\bf Theorem E}
\textit{Every connected simply connected locally conformal symplectic nilpotent Lie group of dimension $2n+2$ with non-zero Lee $1$-form is the locally conformal symplectic nilpotent extension of a connected simply connected 
symplectic nilpotent Lie group $S$ and, in turn, $S$ may be obtained as a sequence of $(n-1)$ double symplectic nilpotent extensions from the abelian Lie group $\mathbb{R}^2$.}

We show that all the compact locally conformal symplectic nilmanifolds in Section \ref{sec:examples} may be described using the previous result. On the other hand, in order to obtain the above results on locally conformal 
symplectic Lie groups, we discuss Lie algebras endowed with locally conformal symplectic structures.

This paper is organized as follows:
\begin{itemize}
 \item in Section \ref{section:Preliminaries} we recall the main definitions and known results about locally conformal symplectic geometry, Lie algebra cohomology, 
 multiplicative vector fields, central extensions of Lie algebras and groups and compact nilmanifolds;
 \item in Section \ref{sec:examples} we obtain some examples of compact locally conformal symplectic nilmanifolds of the first kind (symplectic or not) which do not admit Vaisman metrics; in particular, we present an example of a 
 compact, 4-dimensional nilmanifold, not diffeomorphic to the product of a compact 3-manifold and a circle, which has a locally conformal symplectic structure but no locally conformal 
 K\"ahler metric;
 \item in Section \ref{sec:foliation} we provide some general results about foliations of codimension 1, we describe the global structure of a connected locally conformal symplectic manifold of the first kind with a compact leaf 
 in its canonical foliation (see \thmref{global-descrip-lcs}) and we deduce some consequences (see Corollary \ref{Martinet_type});
 \item in Section \ref{sec:lcs_Lie_algebras} we study locally conformal symplectic structures on Lie algebras, with a particular emphasis on the nilpotent case. 
 \item in Section \ref{sec:lcs_Lie_groups} we consider left-invariant locally conformal symplectic structures on Lie groups emphasizing again the nilpotent case. We show how to recover the examples of Section 
\ref{sec:examples} from the general description.
\end{itemize}

%
%

\section{Preliminaries}\label{section:Preliminaries}

In this section we review the basics in locally conformal symplectic geometry as well as some definitions, 
constructions and results on Lie algebra cohomology, multiplicative vector fields, central extensions of Lie algebras and groups and compact nilmanifolds.

\subsection{Locally conformal symplectic structures}\label{LCS_Manifolds}

\begin{definition}
A \emph{locally conformal symplectic (lcs)} structure on a smooth manifold $M^{2n}$ ($n \geq 2$) consists of a 2-form $\Phi\in\Omega^2(M)$ 
and a closed 1-form $\omega\in\Omega^1(M)$, called the Lee form, such that $\Phi^n$ is a volume form on $M$ and 
\begin{equation}\label{eq:1}
 d\Phi=\omega\wedge\Phi.
\end{equation}
The lcs structure is \emph{globally conformal symplectic (gcs)} if $\omega$ is exact.
\end{definition}

Notice in particular that if $H^1(M;\bR)=0$, every lcs structure on $M$ is gcs. 
We require $n\geq 2$, since in dimension 2 a 2-form is automatically closed, hence lcs geometry in dimension 2 is nothing but symplectic geometry.

Here is an equivalent definition: a manifold $M^{2n}$ has a lcs structure if there exist a 2-form $\Phi\in\Omega^2(M)$ with 
$\Phi^n\neq 0$, an open cover $M=\cup_\alpha U_\alpha$ and smooth functions $\sigma_\alpha\colon U_\alpha\to \bR$ such that 
$\Phi_\alpha=e^{\sigma_\alpha}\Phi\big|_{U_\alpha}$ satisfies $d\Phi_\alpha=0$. The lcs structure is globally conformal 
symplectic if the domain of $\sigma_\alpha$ can be chosen to be all of $M$. In this case, the global conformal change $\Phi\mapsto \Phi'=e^\sigma\Phi$ produces a \emph{closed} and 
non-degenerate 2-form $\Phi'$, hence $(M,\Phi')$ is a symplectic manifold.

Since $\Phi$ is a non-degenerate 2-form, it provides an isomorphism $\fX(M)\to \Omega^1(M)$ given by
\[
X\mapsto \imath_X\Phi.
\]
We define $V\in\fX(M)$ by the condition $\imath_V\Phi=\omega$. $V$ is the \emph{Lee vector field} of the lcs structure. Clearly $\omega(V)=0$, since $\Phi$ is skew-symmetric.

As usual, a good approach to the study of a geometric structure is to consider its automorphisms. If $(\Phi,\omega)$ is a lcs structure on $M$, an \emph{infinitesimal automorphism} of $(\Phi,\omega)$ is a 
vector field $X\in\fX(M)$ such that $\Lie_X\Phi=0$. Since $n \geq 2$, $\Lie_X\omega =0.$ Infinitesimal automorphisms of $(\Phi,\omega)$ form a Lie subalgebra $\fX_\Phi(M)$ of $\fX(M)$. 
Moreover, if $X\in\fX_\Phi(M)$ then $\omega(X)$ is a constant function whenever $M$ is connected. Hence the \emph{Lee morphism} $\ell\colon \fX_\Phi(M)\to\bR$, $\ell(X)=\omega(X)$, is well defined. Viewing $\bR$ as an abelian
Lie algebra, $\ell$ becomes a Lie algebra morphism (see \cite{VaismanF} or \cite[Proposition 2]{Banyaga3}). Notice that the Lee vector field $V$ is in $\fX_\Phi(M)$ and that $\ell(V)=0$. 
Since the image of $\ell$ has dimension at most 1, the Lee morphism is either surjective or identically zero. The lcs structure $(\Phi,\omega)$ is said to be \emph{of the first kind} if the Lee 
morphism is surjective, \emph{of the second kind} otherwise (see \cite{VaismanF}). 

In this paper, we shall restrict ourselves to lcs structures of the first kind. Also, in general, when we say \emph{lcs structure} we explicitly exclude the possibility that the structure is 
actually globally conformal symplectic.

Let $(\Phi,\omega)$ be a lcs structure of the first kind on $M$. Let $U\in\fX_\Phi(M)$ be a vector field such that $\omega(U)=1$. Define $\eta\in\Omega^1(M)$ by the equation 
$\eta=-\imath_U\Phi$. If $U$ and $\eta$ are as above and $V$ is the Lee vector field then
\[
\eta(U) = 0, \; \; \eta(V) = 1, \; \; 
\Lie_U\eta=\Lie_U\omega=\Lie_V\eta=\Lie_V\omega=0. 
\]
In particular, $\imath_U d\eta=\imath_V d\eta=0$.

The next proposition gives an alternative characterization of lcs structures of the first kind in terms of the existence of a certain 1-form. For a proof, see \cite[Proposition 2.2]{VaismanF}.

\begin{proposition}\label{lcs_1_kind}
Let $M^{2n}$ be manifold endowed with a lcs structure of the first kind $(\Phi,\omega)$. Then there exists a 1-form $\eta\in\Omega^1(M)$ such that
\begin{equation}\label{eq:2}
\Phi= d\eta-\omega\wedge\eta, 
\end{equation}
$d\eta$ has rank $2n-2$ and $\omega \wedge \eta \wedge (d\eta)^{n-1}$ is a volume form.
Conversely, let $M^{2n}$ be a manifold endowed with two nowhere zero 1-forms $\omega$ and $\eta$, with $d\omega=0$, $\textrm{rank}(d\eta)<2n$ and such that $\omega\wedge\eta\wedge(d\eta)^{n-1}$ is a 
volume form. Then $M$ admits a lcs structure of the first kind. 
\end{proposition}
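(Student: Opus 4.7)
The plan is to prove the two implications by direct computation, using Cartan's formula and basic multilinear algebra.

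For the direct implication, I would set $\eta := -\imath_U \Phi$, as suggested by the discussion preceding the proposition, and compute $\mathcal{L}_U \Phi$ via Cartan's formula:
\[
0 = \mathcal{L}_U \Phi = d\imath_U \Phi + \imath_U d\Phi = -d\eta + \imath_U(\omega \wedge \Phi) = -d\eta + \Phi + \omega \wedge \eta,
\]
using $\omega(U)=1$ and $\imath_U\Phi = -\eta$. This gives \eqref{eq:2}. Next I would check that both $U$ and the Lee vector field $V$ lie in $\ker(d\eta)$: Cartan plus $\mathcal{L}_U\eta = 0 = \mathcal{L}_V\eta$ and the fact that $\eta(U)=0$, $\eta(V) = \omega(U) = 1$ give $\imath_U d\eta = \imath_V d\eta = 0$; since $U$ and $V$ are linearly independent (they have different values on $\eta$), this forces $\mathrm{rank}(d\eta) \leq 2n-2$, i.e.\ $(d\eta)^n = 0$.

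The rank equality and the volume statement then come from expanding $\Phi^n$. Since $(\omega\wedge\eta)\wedge(\omega\wedge\eta)=0$, only the first two terms of the binomial expansion survive, so
\[
\Phi^n = (d\eta)^n - n\,\omega \wedge \eta \wedge (d\eta)^{n-1} = -n\,\omega \wedge \eta \wedge (d\eta)^{n-1}.
\]
As $\Phi^n$ is a volume form, so is $\omega\wedge\eta\wedge(d\eta)^{n-1}$, and in particular $(d\eta)^{n-1}\neq 0$, forcing $\mathrm{rank}(d\eta) = 2n-2$.

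For the converse, I would set $\Phi := d\eta - \omega\wedge\eta$ and check \eqref{eq:1} directly using $d\omega = 0$: $d\Phi = \omega\wedge d\eta = \omega\wedge\Phi$. The same binomial computation as above, together with $(d\eta)^n = 0$ (which follows from $\mathrm{rank}(d\eta) < 2n$), shows $\Phi^n = -n\,\omega\wedge\eta\wedge(d\eta)^{n-1}$, a volume form, so $\Phi$ is non-degenerate. The crux is then to exhibit an anti-Lee vector field. Here is where the main obstacle lies, but it is a linear-algebra step. Pointwise, $\ker(d\eta)$ is a rank-$2$ distribution; choosing a Darboux-type basis in which $d\eta = \sum_{i=1}^{n-1} e^{2i-1}\wedge e^{2i}$, one sees from the nowhere-vanishing of $\omega\wedge\eta\wedge(d\eta)^{n-1}$ that $\omega|_{\ker d\eta}$ and $\eta|_{\ker d\eta}$ are pointwise linearly independent. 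Therefore the equations $\omega(U)=1$, $\eta(U)=0$, $\imath_U d\eta = 0$ uniquely determine a smooth vector field $U$ (and similarly $V$ with $\omega(V)=0$, $\eta(V)=1$). Finally, Cartan's formula gives $\imath_U\Phi = -\eta$ and $\imath_U d\Phi = \imath_U(\omega\wedge\Phi) = \Phi + \omega\wedge\eta = d\eta$, hence $\mathcal{L}_U\Phi = -d\eta + d\eta = 0$, so $U$ is an anti-Lee vector field, proving that $(\Phi,\omega)$ is of the first kind.
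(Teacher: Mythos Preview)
Your argument is correct and is essentially the standard proof; the paper itself does not supply a proof, referring instead to \cite[Proposition~2.2]{VaismanF}. One small refinement: in the forward direction you invoke $\mathcal{L}_V\eta=0$ to deduce $\imath_V d\eta=0$, but that identity is more cleanly obtained \emph{after} you already have $\Phi=d\eta-\omega\wedge\eta$, by contracting directly: $\imath_V d\eta=\imath_V\Phi+\imath_V(\omega\wedge\eta)=\omega+0-\omega\,\eta(V)=0$. This avoids any appearance of circularity, since establishing $\mathcal{L}_V\eta=-\imath_{[V,U]}\Phi=0$ from first principles would otherwise require knowing $[U,V]=0$.
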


The conditions given in Proposition \ref{lcs_1_kind} imply that there exist two vector fields $U$ and $V$ on $M$ which are characterized by the following conditions
\begin{eqnarray*}
\omega(U) =1, & \eta(U) = 0, & i_Ud\eta = 0,\nonumber \\
\omega(V) =0, & \eta(V) = 1, & i_Vd\eta = 0.
\end{eqnarray*}

Here $V$ is the Lee vector field, $U$ is an \emph{anti-Lee vector field} and $\eta$ is an \emph{anti-Lee $1$-form}.

Note that different anti-Lee vector fields (and, thus, different anti-Lee $1$-forms) may exist for a lcs structure of the first kind. However, from now on, when we refer to a 
lcs structure of the first kind, we will assume that the anti-Lee vector field (and, therefore, the anti-Lee $1$-form) is fixed. Consequently, in what follows, a lcs structure of the first 
kind will be a couple of 1-forms which satisfy the conditions in Proposition \ref{lcs_1_kind}.

Lcs structures can be told apart according to another criterion, which we now review briefly. Let $M$ be a smooth manifold and let $\omega\in\Omega^1(M)$ be a closed 1-form. In 
\cite{GL}, a twisted de Rham 
differential $d_\omega$ was defined: given $\alpha\in\Omega^p(M)$, one sets
\[
d_\omega(\alpha)=d\alpha-\omega\wedge\alpha.
\]
One sees that $(d_\omega)^2=0$, hence the cohomology $H^\bullet_\omega(M)$ of the complex $(\Omega^\bullet(M),d_\omega)$ is defined. $H^\bullet_\omega(M)$ is called the \emph{Lichnerowicz} or \emph{Morse-Novikov} cohomology of $M$ 
relative to $\omega$. Notice that $d_\omega$ is not a derivation with respect to the algebra structure given by the wedge product on $\Omega^\bullet(M)$, i.e.\ it does not satisfy the Leibniz rule. 
If $\omega$ is an exact 1-form, then $(\Omega^\bullet(M),d_\omega)$ is homotopic to the standard de Rham complex $(\Omega^\bullet(M),d)$, hence 
$H^\bullet_\omega(M)\cong H^\bullet(M;\bR)$. But this is not the case if $\omega$ is not exact; for instance, when $M$ is connected, oriented and 
$n$-dimensional, $H^n_\omega(M)= 0$, see \cite[Page 429]{GL}. However, as pointed out in \cite{BK1}, the Euler characteristic of the Lichnerowicz cohomology equals the usual Euler characteristic. Also, 
$H^\bullet_\omega(M)$ is isomorphic to the cohomology of $M$ with values in the sheaf $\mathcal{F}_\omega$ 
of local functions $f$ on $M$ such that $d_\omega f=0$ (see \cite[Proposition 3.1]{VaismanC}).

Let $(\Phi,\omega)$ be a lcs structure on a manifold $M$. Equation \eqref{eq:1} above tells us that $\Phi$ is a 2-cocycle in $(\Omega^\bullet(M),d_\omega)$, hence it defines a cohomology class 
$[\Phi]_\omega\in H^2_\omega(M)$. The lcs structure $(\Phi,\omega)$ is called \emph{exact} if $[\Phi]_\omega=0\in H^2_\omega(M)$, \emph{non exact} otherwise.

Equation \eqref{eq:2} above shows that a lcs structure of the first kind is automatically exact. The converse, however, need not be true. Indeed, by \cite[Proposition 3]{Banyaga3}, out of an 
exact lcs structure $(\Phi,\omega)$ we get a vector field $X$ which satisfies $\cL_X\Phi=(\omega(X)-1)\Phi$ and we can not assure that $\omega(X)=1$. See also Example \ref{solv_ex_not_first_kind} 
below.

According to \cite[Corollary 1]{Banyaga1}, the study of non exact lcs structures and of their automorphisms on a manifold $M$ is strictly related to the study of symplectic structures on the 
minimum cover $\tilde{M}\to M$ on which the Lee form becomes exact and of their corresponding automorphisms. Examples of non exact lcs structures on the 4-dimensional solvmanifold constructed in 
\cite{ACFM} are given in \cite{Banyaga1}.

We describe now two methods to construct manifolds endowed with lcs structures of the first kind from contact manifolds. These examples play a very important role in this paper.
\begin{example}\label{ex:1}
Let $(L,\theta)$ be a contact manifold of dimension $2n-1$; hence $\theta\wedge(d\theta)^{n-1}$ is a volume form and the \emph{Reeb field} $R\in\fX(M)$ is uniquely determined by the 
conditions $\imath_Rd\theta=0$ and $\imath_R\theta=1$. Let $I\subset\bR$ be an open interval (we do not exclude the case $I=\bR$). Then $M=L\times I$ admits a gcs structure, which we describe 
using Proposition \ref{lcs_1_kind}. Let $\pi_i$ denote the projection from $M$ to the $i$-th factor, $i=1,2$. We have two 1-forms $\omega_\theta$ and $\eta_\theta$ on 
$M$,
\[
\omega_\theta=\pi_2^*dt \quad \mathrm{and} \quad \eta_\theta=\pi_1^*\theta,
\]
where $t$ is the standard coordinate on $\bR$. The anti-Lee vector field is $U_\theta=\left(0, \frac{\partial}{\partial t}\right)$ while the Lee vector field is 
$V_\theta=(R, 0)$. Clearly, $M$ is globally conformal symplectic to the standard symplectization of the contact manifold $L$.

In the compact case, one can start with a compact contact manifold $(L,\theta)$ and consider the product $M\coloneq L\times S^1$. The two 1-forms
\[
\omega_\theta=\pi_2^*\tau \quad \mathrm{and} \quad \eta_\theta=\pi_1^*\theta,
\]
where $\tau$ is the angular form on $S^1$, give a lcs structure of the first kind on $M$.
\end{example}

\begin{example}\label{ex:2}
More generally, let $(L,\theta)$ be a contact manifold and let $\phi\colon L\to L$ be a strict contactomorphism: this means that $\phi$ is a diffeomorphism and $\phi^*\theta=\theta$. Let $c$ be a 
positive real number and denote by $M$ the \emph{suspension} of $L$ by $\phi$ and $c > 0$, i.e.
\[
M= L \times_{(\phi, c)} \bR =\frac{L \times \bR }{\sim_{(\phi, c)}},
\]
where $(x,t)\sim_{(\phi, c)}(x',t')$ if and only if $x=\phi^k(x')$ and $t'=t+ck$, $k\in\bZ$. $M$ is also known as the \emph{mapping torus} $L_{(\phi, c)}$ of $L$ by $c$ and $\phi$; it 
can alternatively be described as the quotient space
\[
M=\frac{L \times [0,c]}{(\phi(x), 0)\sim (x, c)}.
\]
The standard gcs structure $(\omega_\theta,\eta_\theta)$ on $ L \times \bR $ induces a lcs structure of the first kind on $M$, which we denote by 
$(\omega_{(\theta,\phi,c)},\eta_{(\theta,\phi,c)})$. We also have a canonical projection $\pi\colon M\to S^1=\bR/c\bZ$ satisfying $\omega_{(\theta,\phi,c)} = 
\pi^*(\tau)$, where $\tau$ is the angular form on $S^1$. Furthermore, if $L$ is compact, so is $M$.
\end{example}

\subsection{Lie algebra cohomology}\label{Lie_alg_cohom}

Let $\fg$ be a real Lie algebra of dimension $n$ and let $W$ be a finite dimensional $\fg$-module. For $1\leq p\leq n$ we consider the space
\[
C^p(\fg;W)=\{f\colon\Lambda^p\fg \to W\};
\]
we also set $C^0(\fg;W)=W$. For $0\leq p\leq n$ we define a map $\d\colon C^p(\fg;W)\to C^{p+1}(\fg;W)$ by
\begin{align*}
 (\d f)(x_1,\ldots,x_{p+1})&=\sum_{i=1}^{p+1}(-1)^{i+1}x_i\cdot f(x_1,\ldots,\hat{x}_i,\ldots,x_{p+1})\\
 &+\sum_{i<j}(-1)^{i+j}f([x_i,x_j],x_1,\ldots,\hat{x}_i,\ldots,\hat{x}_j,\ldots,x_{p+1}),
\end{align*}
where $\cdot$ denotes the $\fg$-module structure on $W$ and $\hat{x}_i$ means that $x_i$ is omitted. We set 
\[
 Z^p(\fg;W)=\ker(\d\colon C^p(\fg;W)\to C^{p+1}(\fg;W)) \quad \mathrm{and} \quad B^p(\fg;W)=\mathrm{im}(\d\colon C^{p-1}(\fg;W)\to C^p(\fg;W)).
\]
Elements in $Z^p(\fg;W)$ (resp.\ $B^p(\fg;W)$) are called \emph{p-cocycles} (resp.\ \emph{p-coboundaries}). One has that $\d^2=0$, hence the degree $p$ cohomology of the complex 
$(C^*(\fg;W),d)$ can be defined as
\[
H^p(\fg;W)=\frac{Z^p(\fg;W)}{B^p(\fg;W)}. 
\]

\begin{example}
When $W=\bR$, the trivial $\fg$-module, then $(C^*(\fg;W),\d)$ is isomorphic to the Chevalley-Eilenberg complex $(\Lambda^\bullet\fg^*,d)$, where $\fg^*=\hom(\fg,\bR)$. In this case, $b_p(\fg)\coloneq\dim H^p(\fg;\bR)$ are 
the \emph{Betti numbers} of $\fg$.
\end{example}

\begin{example}\label{1_dim_rep}
Given a Lie algebra $\fg$, suppose that there exists a non-zero $\omega\in\fg^*$ with $d\omega=0$. Consider the 1-dimensional non-trivial $\fg$-module $W_\omega$ with $\fg$-representation given by
\begin{equation}\label{eq:representation}
 X\cdot w=-\omega(X)w, 
\end{equation}
where $w\in W_\omega$. Since $d\omega=0$, \eqref{eq:representation} is indeed a Lie algebra representation. In this peculiar situation, a computation shows that 
$\delta=d_\omega$, where $d_\omega(\alpha)=d\omega-\omega\wedge\alpha$. Thus the Lie algebra cohomology of $W_\omega$ coincides with the so-called \emph{Lichnerowicz} or \emph{Morse-Novikov} 
cohomology of $\fg$ (see \cite{Millionschikov})
\end{example}

Recall that, for a nilpotent Lie algebra $\fg$, $b_1(\fg)\geq 2$, hence we can always find a non-zero element $\omega\in\fg^*$ with $d\omega=0$. In this case, $W_\omega$ is a non-trivial 1-dimensional $\fg$-module. We need the 
following result of Dixmier:

\begin{theorem}[{\cite[Th\'eor\`eme 1]{Dixmier}}]\label{Dixmier}
Let $\fg$ be a nilpotent $n$-dimensional Lie algebra and let $W$ be a $\fg$-module such that every $\fg$-module contained in $W$ is non-trivial. Then $H^p(\fg;W)=0$ for $0\leq p\leq n$.
\end{theorem}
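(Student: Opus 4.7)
The plan is to induct on $n = \dim\fg$, with the base case $n=0$ immediate since the hypothesis $W^{\fg}=0$ forces $W=0$. For the inductive step, I would exploit the fact that a nonzero nilpotent Lie algebra has non-trivial center: pick $0\neq z\in Z(\fg)$, set $\fh=\bR z$, and let $\bar\fg=\fg/\fh$, still nilpotent but of dimension $n-1$. Write $T=\rho(z)\in\mathrm{End}(W)$; centrality of $z$ makes $T$ commute with every $\rho(x)$, so its Fitting decomposition splits $W=W_0\oplus W_1$ as $\fg$-modules, with $T$ nilpotent on $W_0$ and invertible on $W_1$. Since $W^{\fg}=W_0^{\fg}\oplus W_1^{\fg}=0$, both summands inherit the Dixmier hypothesis, and it is enough to prove $H^{\bullet}(\fg;W_i)=0$ for $i=0,1$ separately.

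The invertible piece $W_1$ I would dispose of without induction. Define the contraction $\iota_z\colon C^p(\fg;W_1)\to C^{p-1}(\fg;W_1)$ by $\iota_z\varphi(x_1,\ldots,x_{p-1})=\varphi(z,x_1,\ldots,x_{p-1})$. Centrality of $z$ kills the $[z,x_i]$ terms in the Cartan-type formula and reduces it to $\d\iota_z+\iota_z\d = T_*$, where $T_*$ denotes post-composition with $T$. Thus $T_*$ is null-homotopic and acts as zero on $H^{\bullet}(\fg;W_1)$; but invertibility of $T$ on $W_1$ makes $T_*$ an automorphism of $H^{\bullet}(\fg;W_1)$, forcing $H^{\bullet}(\fg;W_1)=0$.

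For the nilpotent piece $W_0$ I would filter by $V_k = \ker T^k$, obtaining $0=V_0\subseteq V_1\subseteq\cdots\subseteq V_N=W_0$; each $V_k$ is a $\fg$-submodule, and $\fh$ acts trivially on every quotient $V_k/V_{k-1}$, so the latter is naturally a $\bar\fg$-module. The essential algebraic check is that $(V_k/V_{k-1})^{\bar\fg}=0$: if $\rho(x)v\in V_{k-1}$ for all $x\in\fg$, then $\rho(x)T^{k-1}v=T^{k-1}\rho(x)v=0$, so $T^{k-1}v\in W^{\fg}=0$, which places $v\in V_{k-1}$. The inductive hypothesis applied to $\bar\fg$ then yields $H^p(\bar\fg;V_k/V_{k-1})=0$; since $\fh$ acts trivially, the Hochschild--Serre spectral sequence for the central ideal $\fh\triangleleft\fg$ degenerates and gives $H^p(\fg;V_k/V_{k-1})=0$. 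Walking up the filtration via the long exact sequences of $0\to V_{k-1}\to V_k\to V_k/V_{k-1}\to 0$ then produces $H^{\bullet}(\fg;W_0)=0$ and closes the induction.

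The main obstacle I anticipate is propagating the Dixmier hypothesis cleanly to each piece appearing in the induction. The Fitting step is automatic because the invariants split, but the filtration step is not: the vanishing of $(V_k/V_{k-1})^{\bar\fg}$ is what allows the inductive hypothesis to apply at each stage, and it rests on the commutation $\rho(x)T^{k-1}=T^{k-1}\rho(x)$, valid precisely because $z$ is central. Once that descent is secured, the remaining machinery — Hochschild--Serre for $\fh$, the long exact sequences of the filtration, and the $\iota_z$-homotopy for the invertible summand — is formal.
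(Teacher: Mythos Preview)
The paper does not supply its own proof of this statement: Theorem~\ref{Dixmier} is quoted verbatim from \cite{Dixmier} and used as a black box to derive Corollary~\ref{cor:Dixmier}. So there is no ``paper's proof'' to compare against.

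That said, your argument is correct and is essentially Dixmier's original one. The reduction of the hypothesis to $W^{\fg}=0$, the Fitting splitting along a central element, the contraction homotopy $\d\iota_z+\iota_z\d=\rho(z)_*$ killing the invertible summand, and the filtration by $\ker T^k$ with descent to $\bar\fg=\fg/\bR z$ are exactly the ingredients. Two minor comments: first, when you invoke Hochschild--Serre, ``degenerates'' is not quite the point---what you use is simply that $E_2^{p,q}=H^p(\bar\fg;H^q(\fh;V_k/V_{k-1}))$ vanishes identically because $H^q(\fh;V_k/V_{k-1})\cong V_k/V_{k-1}$ for $q=0,1$ and you have already killed $H^p(\bar\fg;V_k/V_{k-1})$ by induction; second, one should check that $H^q(\fh;V_k/V_{k-1})$ carries the expected $\bar\fg$-module structure, which it does precisely because $z$ is central (so the transgression/conjugation action of $\bar\fg$ on $H^*(\fh;-)$ reduces to the original action on coefficients). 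With those points made explicit, the proof is complete.
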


\begin{corollary}\label{cor:Dixmier}
 Suppose $\fg$ is an $n$-dimensional nilpotent Lie algebra and pick a non-zero $\omega\in\fg^*$ with $d\omega=0$. Then $H^p(\fg;W_\omega)=0$ for $0\leq p\leq n$.
\end{corollary}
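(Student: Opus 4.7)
The plan is to verify the hypothesis of Dixmier's theorem (Theorem \ref{Dixmier}) and then simply invoke it. Since the module $W_\omega$ is one-dimensional, its only $\fg$-submodules are $\{0\}$ and $W_\omega$ itself, so the condition that every submodule of $W_\omega$ be non-trivial reduces to checking that $W_\omega$ itself is a non-trivial $\fg$-module.

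First I would recall that, by the definition given in Example \ref{1_dim_rep}, the $\fg$-action on $W_\omega$ is
\[
X \cdot w = -\omega(X)\, w, \qquad X \in \fg,\ w \in W_\omega.
\]
This action is the trivial one if and only if $\omega(X) = 0$ for every $X \in \fg$, i.e. if and only if $\omega = 0$. Since the hypothesis of the corollary explicitly excludes this case, $W_\omega$ is a non-trivial $\fg$-module, and consequently every non-zero submodule of $W_\omega$ (there is only one, namely $W_\omega$ itself) is non-trivial.

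Having checked the hypothesis, I would then apply Theorem \ref{Dixmier} directly to conclude that $H^p(\fg; W_\omega) = 0$ for all $0 \leq p \leq n$.

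There is essentially no obstacle: the entire content of the corollary is the observation that the non-vanishing of $\omega$ promotes $W_\omega$ from the trivial one-dimensional module (for which cohomology is not forced to vanish, and in fact computes the ordinary Betti numbers $b_p(\fg)$) to a one-dimensional module satisfying Dixmier's hypothesis. The only subtle point to mention is the convention that ``every $\fg$-module contained in $W$'' should be read as referring to non-zero submodules, which in the one-dimensional case leaves only $W_\omega$ itself to check.
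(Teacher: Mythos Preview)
Your proposal is correct and matches the paper's intended argument: the paper states Corollary \ref{cor:Dixmier} immediately after Theorem \ref{Dixmier} without giving a proof, treating it as the obvious specialization to the one-dimensional module $W_\omega$. Your verification that $\omega \neq 0$ makes $W_\omega$ a non-trivial module, together with the observation that a one-dimensional module has no proper non-zero submodules, is exactly the missing step.
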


\subsection{Multiplicative vector fields on Lie groups}\label{multiplicative}

In this section we review some definitions and constructions on multiplicative vector fields in a Lie group and semidirect product of Lie groups and algebras (for more information see, for instance, \cite{MaXu}).

Let $H$ be a Lie group with Lie algebra $\fh$ and let $\phi\colon \mathbb{R} \to \Aut (H)$ be a representation of the abelian Lie group $\mathbb{R}$ on $H$. $\phi$ is the flow of a multiplicative vector 
field $\cM\colon H \to TH$, that is, $\cM$ satisfies the condition 
\[
\cM(h h') = \cM(h) \cdot \cM(h'), \; \; \mbox{ for } h, h' \in H
\]
where $\cdot$ denotes the multiplication in the Lie group $TH$. In other words,
\[
\cM(h h') = (T_hr_{h'})(\cM(h)) + (T_{h'} \ell_{h})(\cM(h')),
\]
$r_{h'}\colon H \to H$ and $\ell_h\colon H \to H$ being right translation by $h'$ and left translation by $h$, respectively.

If $\fe$ is the identity element in $H$, $\cM(\fe) = 0$ and, in addition, $\cM$ induces a derivation $D\colon\fh \to\fh$ in the Lie algebra $\fh$ which is defined by
\begin{equation}\label{der-induced}
\lvec{DX} = [\lvec{X}, \cM],
\end{equation}
for $X \in\fh$, where $[\cdot, \cdot ]$ is the standard Lie bracket of vector fields in $H$. Here, we use the following notation: if $K$ is a Lie group with Lie algebra $\fk$ and $X \in \fk$ then $\lvec{X}$ is 
the left-invariant vector field on $K$ whose value at $\fe$ is $X$. Recall that a derivation of a Lie algebra $\fh$ is a linear map $D\colon \fh\to\fh$ such that
\[
D([X,Y])=[D(X),Y]+[X,D(Y)] \quad \forall X,Y\in\fh.
\]

The derivations of a Lie algebra $\fh$ form a Lie algebra, denoted $\mathrm{Der}(\fh)$. A derivation is a 2-cocycle in the complex $C^*(\fh,\fh)$, where the $\fh$-module 
structure of $\fh$ is given by the adjoint representation. A derivation $D$ is \emph{inner} if $D=\mathrm{ad}_X$ for some $X\in\fh$.

Using the representation $\phi$, we may consider the semidirect product Lie group $G = H \rtimes_\phi\mathbb{R}$, with multiplication defined by
\[
(h, t) (h', t') = (h\phi_t(h'), t + t'), \; \; \mbox{ for } (h, t), (h', t') \in G.
\]

The Lie algebra $\fg$ of $G$ is the semidirect product $\fg =\fh \rtimes_D \mathbb{R}$. This is simply $\fh\oplus\bR$ with bracket
\begin{equation}\label{extension_derivation}
[(X,a), (Y,b)] = (aD(Y) - bD(X) + [X, Y]_{\fg},0), \; \; \mbox{ for } (X,a), (Y,b) \in \fh\oplus\bR.
\end{equation}

Conversely, let $D\colon\fh\to\fh$ be a derivation of $\fh$ and let $H$ be the connected simply connected Lie group with Lie algebra $\fh$. Then, there exists a unique 
multiplicative vector field $\cM$ on $H$ whose flow induces a representation
\[
\phi\colon \mathbb{R} \to \Aut(H)
\]
of the abelian Lie group $\mathbb{R}$ on $H$ and
\[
\lvec{DX} = [\lvec{X},\cM],
\]
for $X \in \fh$. Hence we can consider the Lie group $G = H \rtimes_\phi \mathbb{R}$ whose Lie algebra $\fg$ is $\fh\rtimes_D\bR$. 

The semidirect product of a group $H$ and $\bR$ with an automorphism 
$\phi\colon \bR\to\Aut(H)$ sits in a short exact sequence $1\to H\to H\rtimes_\phi \bR\to \bR\to 1$ and $H$ is a normal subgroup of $H\rtimes_\phi\bR$.

\subsection{Central extensions of Lie algebras and groups by $\mathbb{R}$}\label{central-ext-alg-group}

In this section we review some constructions on central extensions of Lie algebras (resp.\ groups) by the abelian Lie algebra (resp.\ group) $\mathbb{R}$ (see, for instance, \cite{MaMiOrPeRa,TuWi}).

Let $S$ be a Lie group with Lie algebra $\fs$. A 2-cocycle on $S$ with values in $\mathbb{R}$ is a smooth map $\varphi\colon S \times S \to \mathbb{R}$ such that
\[
\varphi(s, s') - \varphi(s, s's'') + \varphi(ss', s'') - \varphi(s', s'') = 0, \quad \mathrm{for} \ s, s', s'' \in S.
\]

If $\varphi$ is a $2$-cocycle on $S$ with values in $\mathbb{R}$, we can consider the central extension of $S$ by $\varphi$, denoted $\bR\cext_{\varphi}S$. As a set, this is $\bR\times S$, with Lie group structure given
\[
(u,s)(u',s') = (u + u' + \varphi(s, s'),ss') \quad \mathrm{for} \ (u,s), (u',s') \in \bR\times S.
\]
On the other hand, the $2$-cocycle $\varphi$ induces a $2$-cocycle $\sigma$ on $\fs$ (compare with Section \ref{Lie_alg_cohom}), defined by
\begin{equation}\label{sigma}
\sigma(X,Y) = \displaystyle \frac{d}{dt}\Big|_{t=0} \frac{d}{ds}\Big|_{s=0} (\varphi(\exp(tX),\exp(sY)) - \varphi(\exp(sY),\exp(tX))),
\end{equation}
where $\exp\colon\fs \to S$ is the exponential map associated with the Lie group $S$.

Moreover, the Lie algebra of the central extension $\bR\cext_{\varphi}S$ is the central extension $\bR\cext_\sigma\fs$ of $\fs$ by $\sigma$; this is just $\bR\oplus\fs$ with bracket
\begin{equation}\label{extension_central}
[(u,X),(u',X')] = (\sigma(X,X'),[X,X']), \; \; \mbox{ for } (u,X), (u',X') \in \bR\oplus\fs. 
\end{equation}

Conversely, let $S$ be a connected simply connected Lie group with Lie algebra $\fs$ and let $\sigma\in\Lambda^2\fs^*$ be a $2$-cocycle. Then, one may find a $2$-cocycle $\varphi\colon S \times S \to \mathbb{R}$ on $S$ with 
values in $\mathbb{R}$ such that $\varphi$ and $\sigma$ are related by \eqref{sigma}. In addition, the central extension 
$\bR \cext_{\varphi} S$ of $S$ by $\varphi$ is a Lie group with Lie algebra the central extension $\bR\cext_{\sigma}\fs$ of $\fs$ by $\sigma$ (for more details, see \cite{TuWi}). A central extension of a group $S$ by $\bR$, 
given by a 2-cocycle $\varphi\colon S\times S\to\bR$ sits in a short exact sequence $1\to \bR\to \bR\cext_\varphi S\to S\to 1$ and $\bR$ is a normal subgroup of $\bR\cext_\varphi S$.

\subsection{Compact nilmanifolds}
Let $G$ be a connected, simply connected nilpotent Lie group and let $\fg$ be its Lie algebra. It is interesting to know when $G$ contains a discrete, co-compact subgroup (i.e.\ a lattice).

\begin{theorem}[Mal'tsev, \cite{Mal'tsev}]\label{Maltsev}
$G$ contains a lattice if and only if there exists a basis of $\fg$ such that the structure constants of $\fg$ with respect to this basis are 
rational numbers.
\end{theorem}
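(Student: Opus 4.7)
The plan is to exploit the fact that for a connected simply connected nilpotent Lie group $G$, the exponential map $\exp\colon\fg\to G$ is a diffeomorphism, and the Baker--Campbell--Hausdorff (BCH) series
\[
\exp(X)\exp(Y)=\exp\Bigl(X+Y+\tfrac12[X,Y]+\cdots\Bigr)
\]
is a \emph{finite} sum of iterated brackets with universal rational coefficients. Hence, in coordinates of the first or second kind, multiplication and inversion in $G$ are polynomial, and everything reduces to comparing a lattice in the vector space $\fg$ with the rational structure of the Lie bracket.

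For the direction ``rational structure constants $\Rightarrow$ lattice,'' I would argue as follows. Suppose a basis $\{X_1,\dots,X_n\}$ of $\fg$ has structure constants in $\bQ$; after rescaling I may assume they lie in $\bZ$. Let $\fg_\bQ$ be the $\bQ$-span; because BCH only involves iterated brackets and universal rational coefficients, $\exp(\fg_\bQ)$ is closed under the group operations. Choosing a strong Mal'tsev basis $\{Y_1,\dots,Y_n\}$ adapted to the descending central series of $\fg_\bQ$, one verifies that
\[
\Gamma=\{\exp(m_1 Y_1)\cdots\exp(m_n Y_n):m_i\in\bZ\}
\]
is a subgroup of $G$: the polynomial identities provided by BCH, combined with induction on nilpotency class, show that a product of two such elements can be rewritten with integer exponents, using that the relevant correction terms live in a lower step of the central series (where the induction hypothesis applies). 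Discreteness and cocompactness are then read off from the fact that the map $(m_1,\dots,m_n)\mapsto \exp(m_1 Y_1)\cdots\exp(m_n Y_n)$ is a polynomial diffeomorphism $\bR^n\to G$ sending $\bZ^n$ onto $\Gamma$.

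For the converse, suppose $\Gamma\subset G$ is a lattice. I would argue by induction on the nilpotency class $k$ of $\fg$. The base case $k=1$ (abelian) is immediate: $\Gamma$ is a lattice in $\fg\cong\bR^n$, and any $\bZ$-basis of $\Gamma$ (transported back by $\exp^{-1}$) yields a basis of $\fg$ in which all structure constants vanish. For the inductive step, use that $\Gamma\cap Z(G)$ is a lattice in the center $Z(G)$ (a classical consequence of Mal'tsev's subgroup theorem) and that $\Gamma/(\Gamma\cap Z(G))$ is a lattice in $G/Z(G)$, which has smaller nilpotency class. By induction, choose a basis of $\fg/Z(\fg)$ with rational structure constants coming from logarithms of elements of $\Gamma/(\Gamma\cap Z(G))$, lift it to a family $\{X_1,\dots,X_r\}\subset\log\Gamma\subset\fg$, and complete it by a $\bZ$-basis $\{X_{r+1},\dots,X_n\}$ of $\log(\Gamma\cap Z(G))$ in $Z(\fg)$. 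The brackets $[X_i,X_j]$ land in $Z(\fg)$, and one reads off their coefficients in terms of the basis $\{X_{r+1},\dots,X_n\}$ by comparing $\exp(X_i)\exp(X_j)\exp(-X_i)\exp(-X_j)\in\Gamma\cap Z(G)$ with its expression via BCH; here the correction terms beyond $[X_i,X_j]$ are iterated brackets of length $\geq 3$, hence already handled by induction, and rationality of the BCH coefficients forces the remaining coefficients to be rational.

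The main obstacle is the inductive/BCH bookkeeping in the ``$\Rightarrow$'' direction: one must simultaneously keep the chosen basis compatible with the central (or derived) series \emph{and} arrange that the basis vectors are logarithms of elements of $\Gamma$. A naive lift from $\fg/Z(\fg)$ need not land in $\log\Gamma$, and making it do so requires a careful choice of Mal'tsev coordinates of the second kind, together with the finiteness of the BCH series, to ensure that the rationality of the top-degree structure constants is preserved under the lift. Once this combinatorial step is in place, the equivalence follows cleanly from the polynomial nature of the group law in these coordinates.
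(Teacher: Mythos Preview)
The paper does not prove this statement; it is Mal'tsev's classical theorem, quoted from \cite{Mal'tsev} and used only as a black box (to guarantee the existence of lattices in the nilpotent Lie groups appearing in Sections~\ref{sec:examples} and~\ref{sec:lcs_Lie_groups}). There is therefore no proof in the paper against which to compare your proposal.

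That said, your sketch follows the standard strategy and the forward direction is essentially fine. In the converse direction, however, your inductive step has a gap. You write that ``the brackets $[X_i,X_j]$ land in $Z(\fg)$,'' but this holds only in the $2$-step case. What the induction on $\fg/Z(\fg)$ actually gives is that $[X_i,X_j]\equiv\sum_{k\leq r}c_{ij}^kX_k\pmod{Z(\fg)}$ with $c_{ij}^k\in\bQ$; you must still show the central defect $z_{ij}=[X_i,X_j]-\sum_{k\leq r}c_{ij}^kX_k$ is a rational combination of $X_{r+1},\dots,X_n$. Your proposed commutator trick $\exp(X_i)\exp(X_j)\exp(-X_i)\exp(-X_j)\in\Gamma$ produces, via BCH, $\exp\bigl([X_i,X_j]+\text{higher iterated brackets}\bigr)$, but those higher brackets are neither central nor yet known to have rational coefficients in your basis---that is precisely what you are trying to prove---so the argument as written is circular. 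The usual repair is structural rather than combinatorial: carry out the induction along the \emph{entire} lower central series (equivalently, work with a strong Mal'tsev basis adapted to that series from the outset), so that at each stage the unknown part of every bracket lies in the next term of the series and the BCH corrections are already controlled by the previous stages.
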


Assume that $G$ contains a lattice $\Gamma$ and let $N=\Gamma\backslash G$ be the corresponding nilmanifold. One identifies elements of $\Lambda^\bullet\fg^*$ with left-invariant 
forms on $G$, which descend to $N$.
\begin{theorem}[Nomizu, \cite{Nom}]\label{Nomizu}
 The natural inclusion $(\Lambda^\bullet\fg^*,d)\hookrightarrow(\Omega^\bullet(N),d)$ induces an isomorphism on cohomology.
\end{theorem}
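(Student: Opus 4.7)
The plan is to prove this by induction on $\dim G$, using the fact that a connected simply connected nilpotent Lie group admits a principal circle bundle structure over a nilmanifold of lower dimension, and comparing the Serre spectral sequence of this fibration with the Hochschild--Serre spectral sequence of the associated central extension of Lie algebras.

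The base case is $\dim G = 1$, so $G = \bR$, $N = S^1$, and $\Lambda^\bullet\fg^* = \bR \oplus \bR$ with zero differential, matching $H^\bullet(S^1;\bR)$ directly. For the inductive step, since $\fg$ is nilpotent, its center $\fz(\fg)$ is non-trivial. Using Mal'tsev's rationality criterion (Theorem \ref{Maltsev}), one can choose a one-dimensional rational central subalgebra $\fz \subset \fz(\fg)$; the corresponding central subgroup $Z \cong \bR$ of $G$ satisfies that $\Gamma \cap Z$ is a lattice in $Z$ (a standard consequence of the rationality of $\Gamma$). The quotient $G' = G/Z$ is a connected simply connected nilpotent Lie group with Lie algebra $\fg' = \fg/\fz$, and $\Gamma' = \Gamma/(\Gamma\cap Z)$ is a lattice in $G'$. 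This yields a principal $S^1$-bundle $\pi\colon N \to N'$ with $N' = \Gamma'\backslash G'$, and, at the infinitesimal level, the central extension $0 \to \fz \to \fg \to \fg' \to 0$.

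Next I would compare two spectral sequences. The Serre spectral sequence of the $S^1$-fibration has
\[
E_2^{p,q} = H^p(N';\bR) \otimes H^q(S^1;\bR) \Rightarrow H^{p+q}(N;\bR),
\]
and the Hochschild--Serre spectral sequence of the central extension has
\[
\tilde E_2^{p,q} = H^p(\fg';\bR) \otimes H^q(\fz;\bR) \Rightarrow H^{p+q}(\fg;\bR).
\]
The inclusion of left-invariant forms gives a morphism of filtered complexes (filtering by the degree in the fiber direction), hence a morphism of spectral sequences $\tilde E_r \to E_r$. On the $E_2$-page, this is
\[
H^p(\fg';\bR) \otimes H^q(\fz;\bR) \longrightarrow H^p(N';\bR) \otimes H^q(S^1;\bR),
\]
which by the inductive hypothesis applied to the nilmanifold $N'$ (and the trivial case $\fz \leftrightarrow S^1$) is an isomorphism. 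By the comparison theorem for spectral sequences, the morphism is an isomorphism on the $E_\infty$-page, and then a standard argument on convergent filtrations gives that the induced map $H^\bullet(\fg;\bR) \to H^\bullet(N;\bR)$ is an isomorphism.

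The main obstacle is setting up the comparison map of spectral sequences cleanly: one must verify that the identification of the $E_1$-terms of the Hochschild--Serre spectral sequence with fiber-wise left-invariant forms on $N$ is compatible, under the natural inclusion, with the $E_1$-term of the Serre spectral sequence (described via the sheaf of vertical cohomology). One must also be careful that $\Gamma \cap Z$ is indeed cocompact in $Z$, which ultimately relies on the rational structure provided by \thmref{Maltsev} and the fact that $\fz$ is a rational subalgebra of $\fg$. Everything else, including the explicit description of the $E_2$-page in both settings, is then formal.
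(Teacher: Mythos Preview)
The paper does not give its own proof of this statement; Theorem \ref{Nomizu} is simply quoted from Nomizu's original paper \cite{Nom} and used as a black box throughout. Your sketch is correct and in fact follows essentially Nomizu's original argument: induction on $\dim G$ via the principal $S^1$-bundle $N\to N'$ coming from a rational one-dimensional central subgroup, together with a comparison of the Serre spectral sequence of the fibration and the Hochschild--Serre spectral sequence of the central extension $0\to\fz\to\fg\to\fg'\to 0$. The technical points you flag (rationality of the chosen central direction so that $\Gamma\cap Z$ is a lattice in $Z$, and compatibility of the inclusion of left-invariant forms with the two filtrations) are exactly the places where care is needed, and they can be handled as you indicate.
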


\subsection{Notation for Lie algebras}\label{Notation_Lie_algebras}

We will adopt the following notation for Lie algebras, best explained by an example. $\fg=(0,0,0,12)$ means that $\fg^*$ has a basis $\{e^1,\ldots,e^4\}$ such that $de^1=de^2=de^3=0$ and $de^4=e^{12}$, where $d$ is the 
Chevalley-Eilenberg differential and $e^{12}\coloneq e^1\wedge e^2$.

%
%

\section{Some examples of compact lcs manifolds of the first kind}\label{sec:examples}

In this section we use nilmanifolds to construct examples of lcs manifolds of the first kind. In dimension 4, we construct a compact nilmanifold 
endowed with a lcs structure of the first kind which does not carry any lcK metric and, furthermore, is not the product of a 3-manifold and a circle, thus proving \thmref{Main:3}. 
In higher dimension we construct compact nilmanifolds with lcs structures of the first kind, symplectic or not, which do not carry any lcK metric (with left-invariant complex structure) and any Vaisman metric. 
We summarize our examples in Table \ref{table:0} (the superscript \textsuperscript{$\dagger$} means \emph{left-invariant} complex or lcK structure).

\begin{table}[h!]
\caption{Summary of the examples}\label{table:0}
\begin{center}
{\tabulinesep=1.2mm
\begin{tabu}{lcccccccc}
\toprule[1.5pt]
& dimension & symplectic & complex\textsuperscript{$\dagger$} & complex & lcs & lcK\textsuperscript{$\dagger$} & lcK & Vaisman\\
\specialrule{1pt}{0pt}{0pt}
\thmref{first:example} & $4$ & \checkmark & $\times$ & $\times$ & \checkmark & $\times$ & $\times$ & $\times$\\
\specialrule{1pt}{0pt}{0pt}
\thmref{dim:6_first} & $6$ & \checkmark & \checkmark & \checkmark &\checkmark & $\times$ & ? & $\times$\\  
\specialrule{1pt}{0pt}{0pt}
\thmref{dim:6_second} & $6$ & $\times$ & $\times$ & ? & \checkmark & $\times$ & ? & $\times$\\  
\specialrule{1pt}{0pt}{0pt}
\thmref{third:example} & $2n,n\geq 4$ & $\times$ & \checkmark & \checkmark & \checkmark & $\times$ & ? & $\times$\\
\bottomrule[1pt]
\end{tabu}}
\end{center}
\end{table}

\subsection{Dimension 4}\label{Dimension:4}

In this section we present an example of a $4$-dimensional compact nilmanifold endowed with a lcs structure of the first kind, without complex structures, hence no lcK 
structures. We use the general construction in Example \ref{ex:2}, i.e.\ we consider the mapping torus of a compact contact manifold by a strict contactomorphism. The contact manifold 
is a compact quotient of the Heisenberg group of dimension 3. This is the connected simply connected nilpotent Lie group
\[
H=\left\{
\begin{pmatrix} 
1 & y & z \\
0 & 1 & x\\
0 & 0 & 1 
\end{pmatrix} \ | \ x,y,z\in\bR\right\}. 
\]
 We denote a point of $H$ by $(x,y,z)$. In these global coordinates, a basis of left-invariant 1-forms is given by
\[
\alpha=d x,\quad \beta=d y \quad \mathrm{and} \quad \theta=d z-yd x.
\]
It is clear that $\theta$ is a contact 1-form on $H$. We look for a strict contactomorphism $\phi$ of $(H,\theta)$. 
The condition 
\[
d(z\circ\phi) - dz = (y\circ\phi)d(x \circ\phi) - y dx
\]
must hold and if we assume that
\[
z\circ\phi = z + f(y), \; \; y \circ\phi = y, \; \; x\circ\phi = x + g(y),
\]
it follows that
\[
\displaystyle \frac{d f}{dy} = y \frac{d g}{dy}.
\]
In particular, if $t \in \bR$ we have that
\[
\phi_t\colon H\to H, \; \; \; \phi_t(x, y, z) = (x + ty, y, z + t y^2/2)
\]
is a strict contactomorphism. In fact, it is easy to prove that
\begin{lemma}\label{contact_Lie_iso} 
In the above situation, $\{\phi_t\}_{t\in \mathbb{R}}$ is a $1$-parameter group of strict contactomorphisms for $(H, \theta)$ and each $\phi_t$ is a Lie group isomorphism.
\end{lemma}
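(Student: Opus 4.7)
The plan is to verify by direct computation in the global coordinates $(x,y,z)$ three facts about the explicit family $\phi_t(x,y,z)=(x+ty,\,y,\,z+ty^2/2)$: (i) $\phi_0=\mathrm{Id}$ and $\phi_{t+s}=\phi_t\circ\phi_s$; (ii) $\phi_t^*\theta=\theta$; and (iii) $\phi_t$ is a group homomorphism of $H$. Once (i) is in hand, $\phi_{-t}$ is a two-sided inverse for $\phi_t$, so bijectivity is automatic, and combined with (iii) this promotes each $\phi_t$ to a Lie group automorphism.

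Part (i) is a one-line substitution: applying $\phi_t$ to $\phi_s(x,y,z)$ leaves $y$ untouched and adds $ty$ to the $x$-coordinate and $ty^2/2$ to the $z$-coordinate, yielding $\phi_{s+t}$. For (ii), pulling back $\theta=dz-y\,dx$ produces
\[
\phi_t^*\theta = d(z+ty^2/2)-y\,d(x+ty) = (dz+ty\,dy)-y\,dx-ty\,dy = \theta,
\]
where the cross-terms in $y\,dy$ cancel. Neither step requires any structural insight beyond substitution.

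The only step that needs careful bookkeeping is (iii). From the matrix presentation of $H$, the group law in these coordinates reads
\[
(x,y,z)\cdot(x',y',z') = (x+x',\,y+y',\,z+z'+yx').
\]
Applying $\phi_t$ to this product gives a $z$-coordinate of $z+z'+yx'+t(y+y')^2/2$, while $\phi_t(x,y,z)\cdot\phi_t(x',y',z')$ produces a $z$-coordinate of $z+z'+yx'+ty^2/2+t(y')^2/2+tyy'$. The identity $(y+y')^2=y^2+2yy'+(y')^2$ shows these agree, and the remaining coordinates match trivially. The \emph{main obstacle}, in the mild sense that it is the only computation where one must keep track of a quadratic cross-term, is precisely this matching; conceptually there is no difficulty, because the correction $ty^2/2$ was designed so that its differential supplies the $ty\,dy$ needed in (ii) and its value is a quadratic in $y$ whose polarization is exactly the Heisenberg cocycle $yx'$ pulled through $\phi_t$ in (iii).
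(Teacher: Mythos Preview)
Your proof is correct. The paper does not give a detailed proof of this lemma (it simply writes ``it is easy to prove that''), but your direct verification of the one-parameter group property, of $\phi_t^*\theta=\theta$, and of the homomorphism property with respect to the Heisenberg multiplication $(x,y,z)\cdot(x',y',z')=(x+x',y+y',z+z'+yx')$ is exactly the kind of straightforward computation the authors have in mind.
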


Now, denote by $\phi$ the strict contactomorphism $\phi_1$. We want to find a lattice $\Gamma\subset H$ such that $\phi(\Gamma) = \Gamma$. It is sufficient to take
\[
\Gamma = \{(m,2n,2p) \in H \ | \ m, n, p \in \mathbb{Z} \}.
\]
Indeed, we have that $\phi(\Gamma)=\Gamma$, hence
\begin{equation}\label{inv-discrete-sub}
\phi_r(\Gamma) = \Gamma, \; \; \; \mbox{ for every } r\in \bZ.
\end{equation}
This implies that $\phi$ induces a diffeomorphism $\bphi\colon L \to L$, where $L$ is the compact nilmanifold $L = \Gamma\backslash H$.

On the other hand, $\theta$ induces a contact 1-form $\btheta$ on $L$ and it is clear that $\bphi\colon L \to L$ is a strict contactomorphism of the contact manifold $(L, 
\btheta)$. Thus, the mapping torus $L_{(\bphi, 1)}$ of $L$ by the couple $(\bphi, 1)$ 
\[
L_{(\bphi, 1)} = \frac{L \times \mathbb{R}}{\sim_{(\bphi, 1)}}
\]
is a 4-dimensional compact lcs manifold of the first kind.

Next, we present a description of $L_{(\bphi, 1)}$ as a compact nilmanifold. In fact, using Lemma \ref{contact_Lie_iso}, we deduce that
\[
\phi\colon \mathbb{R} \to \mathrm{Aut}(H), \quad t \mapsto \phi_t
\]
is a representation of the abelian Lie group $\mathbb{R}$ on $H$. Therefore, we can consider the semidirect product $G = H \rtimes_{\phi} \mathbb{R}$ with multiplication given by
\[
((x, y, z),t)\cdot((x', y', z'),t') = ((x+x'+ty', y+y', z+z'+t(y')^2/2 + yx' + tyy'),t+t').
\]
A basis of left-invariant vector fields on $G$ is $\{U,V,A,B\}$, with
\[
U = \frac{\partial}{\partial t}, \; V = \frac{\partial}{\partial z}, \; A = \frac{\partial}{\partial x} + y \frac{\partial}{\partial z}, \; B = \frac{\partial}{\partial y} + t 
\frac{\partial}{\partial x} + ty \frac{\partial}{\partial z}, 
\]
and we have that
\[
[A,B] = -V, \; \; [U,B] = A,
\]
the rest of the basic Lie brackets being zero. Thus, $G$ is a connected simply connected nilpotent Lie group. The dual basis of left-invariant 1-forms on $G$ is $\{\omega,\eta, \alpha,\beta\}$, with
\[
\omega = dt, \; \; \eta = dz - ydx, \; \; \alpha = dx - t dy, \; \; \beta = dy,
\]
and
\begin{equation}\label{diff-left-inv}
d\omega = d\beta = 0, \; \; d\alpha = \beta\wedge\omega, \; \; d\eta = \alpha \wedge \beta.
\end{equation}
From \eqref{inv-discrete-sub}, it follows that the lattice $\Gamma$ of $H$ is invariant under the restriction to $\mathbb{Z}$ of the representation $\phi$. This implies that
$\Xi=\Gamma\rtimes_\phi\bZ$ is a lattice in $G$ and $M = \Xi\backslash G$ is a compact nilmanifold. 

Now, it is easy to prove that the map
\[
M \to L_{(\bphi,1)}, \; \; \; [((x, y, z), t)] \to [([(x,y, z)], t)]
\]
is a diffeomorphism and, therefore, $L_{(\bphi, 1)}$ is a compact nilmanifold.

\begin{remark}
Note that, under the identification between $L_{(\bphi, 1)}$ and $M$, the lcs structure of the first kind on $L_{(\bphi, 1)}$ is just the couple $(\bomega, \bar{\eta})$, where $\bomega$ and 
$\bar{\eta}$ are the 1-forms on $M$ induced by the left-invariant 1-forms $\omega$ and $\eta$, respectively, on $G$.
\end{remark}

Moreover, we may prove the following result.
\begin{lemma}\label{lemma:1}
$L_{(\bphi, 1)}$ is a compact nilmanifold of dimension $4$ which admits a lcs structure of the first kind and $b_1(L_{(\bphi, 1)}) = 2$. 
\end{lemma}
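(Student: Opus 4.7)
Most of the work has been done in the preceding discussion, so the proof is essentially a matter of assembling three pieces: identifying $L_{(\bphi,1)}$ with the nilmanifold $M = \Xi\backslash G$, checking that the pair $(\bomega,\bar\eta)$ fulfils the hypotheses of Proposition \ref{lcs_1_kind}, and computing $b_1$ via Nomizu's theorem.

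First, since $G = H \rtimes_\phi \mathbb{R}$ is a connected simply connected nilpotent Lie group of dimension $3+1=4$ and $\Xi = \Gamma \rtimes_\phi \mathbb{Z}$ is a lattice in $G$, the quotient $M = \Xi\backslash G$ is a compact nilmanifold of dimension $4$, which has already been shown to be diffeomorphic to $L_{(\bphi,1)}$. Thus the compactness and nilmanifold claims are immediate.

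Second, to verify that $(\bomega,\bar\eta)$ is an lcs structure of the first kind, I would use the second part of Proposition \ref{lcs_1_kind}: both $\bomega$ and $\bar\eta$ are nowhere zero (being induced by left-invariant $1$-forms that are part of a global coframe on $G$), $d\bomega = 0$ by \eqref{diff-left-inv}, and $d\bar\eta$ has rank exactly $2$ since $d\eta = \alpha\wedge\beta$ is a decomposable non-zero $2$-form at the Lie algebra level. Finally, a short direct computation using $\alpha\wedge\beta = (dx - t\,dy)\wedge dy = dx\wedge dy$ and $\eta = dz - y\,dx$ shows that
\[
\omega\wedge\eta\wedge d\eta \;=\; dt\wedge(dz - y\,dx)\wedge dx\wedge dy \;=\; dt\wedge dz\wedge dx\wedge dy,
\]
which is a volume form on $G$ and descends to a volume form on $M$. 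Hence $(\bomega,\bar\eta)$ defines an lcs structure of the first kind, with Lee form $\bomega$ and anti-Lee $1$-form $\bar\eta$.

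Third, for the Betti number I would invoke Nomizu's theorem (Theorem \ref{Nomizu}), which gives $H^1(M;\mathbb{R}) \cong H^1(\fg;\mathbb{R})$ where $\fg$ is the Lie algebra of $G$. From \eqref{diff-left-inv}, the closed left-invariant $1$-forms are precisely the linear combinations of $\omega$ and $\beta$, while there are no exact left-invariant $1$-forms. Therefore $H^1(\fg;\mathbb{R})$ is $2$-dimensional, yielding $b_1(L_{(\bphi,1)}) = 2$. No step is really hard here; the only mild point of care is the rank and volume-form check in the second step, which would be the main source of error if one tried to do it sloppily in coordinates rather than using the Lie algebra structure equations \eqref{diff-left-inv}.
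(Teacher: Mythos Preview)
Your proof is correct and follows essentially the same route as the paper: the identification of $L_{(\bphi,1)}$ with $M=\Xi\backslash G$ and the lcs structure are taken from the preceding discussion, and $b_1$ is computed via Nomizu's theorem together with the structure equations \eqref{diff-left-inv}, yielding $H^1(\fg;\bR)=\langle\beta,\omega\rangle$. The only difference is one of presentation: the paper's proof is terser, deferring the compactness, nilmanifold, and lcs claims entirely to the text before the lemma (in particular to the mapping-torus construction of Example~\ref{ex:2}) and writing out only the $b_1$ computation, whereas you re-verify the lcs conditions directly via Proposition~\ref{lcs_1_kind}.
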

\begin{proof}
Let $H^1(L_{(\bphi, 1)};\bR)$ be the first de Rham cohomology group of $L_{(\bphi, 1)}$. Then, 
using \eqref{diff-left-inv} and Theorem \ref{Nomizu}, we deduce that
\[
H^1(L_{(\bphi, 1)};\bR) = \langle \bbeta, \bomega \rangle,
\]
where $\bbeta$ is the $1$-form on $M \simeq L_{(\bphi, 1)}$ induced by the left-invariant 1-form $\beta$ on $G$.
\end{proof}

\begin{theorem}\label{first:example}
 $L_{(\bphi, 1)}$ is a compact nilmanifold of dimension $4$ which admits a lcs structure of the first kind; however, $L_{(\bphi, 1)}$ does not carry any lcK metric.
\end{theorem}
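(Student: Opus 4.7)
The first assertion is the content of Lemma~\ref{lemma:1}. For the non-existence of an lcK metric, it suffices to show that $L_{(\bphi,1)}$ carries no integrable almost complex structure, since any lcK metric is by definition compatible with such a $J$.

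My plan is to combine the computation $b_1(L_{(\bphi,1)})=2$ from Lemma~\ref{lemma:1} with the classification of compact $4$-dimensional nilmanifolds admitting complex structures. The structure equations \eqref{diff-left-inv} identify the Lie algebra $\fg$ of $G$ as $3$-step nilpotent, with lower central series $\fg\supset\langle A,V\rangle\supset\langle V\rangle\supset 0$; thus $\fg$ is the unique $4$-dimensional filiform nilpotent Lie algebra, isomorphic neither to $\bR^4$ nor to $\mathfrak{h}_3\oplus\bR$. By Hasegawa's classification, a compact $4$-dimensional nilmanifold admitting a complex structure must be diffeomorphic to a complex torus (Lie algebra $\bR^4$, $b_1=4$) or to a primary Kodaira surface (Lie algebra $\mathfrak{h}_3\oplus\bR$, $b_1=3$). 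Since $b_1(L_{(\bphi,1)})=2$, the nilmanifold $L_{(\bphi,1)}$ falls outside this list, and so admits no complex structure and, a fortiori, no lcK metric.

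The main obstacle is invoking Hasegawa's theorem, whose proof relies on the nontrivial fact that in dimension $4$ any complex structure on a compact nilmanifold is equivalent to a left-invariant one on the underlying Lie algebra. A more self-contained alternative uses the Kodaira--Enriques classification: a minimal compact complex surface with $b_1=2$ and $\chi=0$ is necessarily hyperelliptic (and parallelizability forbids blow-ups), but hyperelliptic surfaces have virtually abelian fundamental group, whereas $\pi_1(L_{(\bphi,1)})=\Xi$ is a lattice in the filiform nilpotent Lie group $G$ and, by Mal'tsev rigidity, shares the $3$-step nilpotent Mal'tsev completion of $G$, hence is not virtually abelian. Either path produces the required contradiction.
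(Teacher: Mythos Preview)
Your first route (via the classification of $4$-dimensional complex nilmanifolds) is correct and in fact proves the stronger assertion that $L_{(\bphi,1)}$ admits no integrable complex structure. The paper argues differently and more economically: assuming an lcK structure exists, one has a compact complex surface with $b_1=2$ even, which by the classification of surfaces \cite{BHPVdV} forces the existence of a K\"ahler metric; then the Benson--Gordon/Hasegawa theorem \cite{BG,Has} (a compact K\"ahler nilmanifold is a torus) yields the contradiction $b_1(T^4)=4\neq 2$. This sidesteps entirely the question of which nilmanifolds carry complex structures, and uses only results already cited in the paper.

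Your alternative route has a gap: a minimal compact complex surface with $b_1=2$ and $\chi=0$ is not necessarily hyperelliptic---for instance $\mathbb{P}^1\times E$ with $E$ an elliptic curve is ruled, minimal, and has exactly these invariants. One can patch this using asphericity (to exclude rational fibres) and then further casework on properly elliptic surfaces, but at that point the argument is no longer more self-contained than the paper's.
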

\begin{proof}
Assume that $L_{(\bphi, 1)}$ carries a lcK metric. Then $L_{(\bphi, 1)}$ is a compact complex surface. By the Kodaira-Enriques classification, $L_{(\bphi, 1)}$ also carries a K\"ahler metric, since 
its first Betti number is even by Lemma \ref{lemma:1} (also, by \cite[IV. Theorem 3.1]{BHPVdV}, a complex surface with even first Betti number admits a K\"ahler metric). 
Now $L_{(\bphi, 1)}$ is a nilmanifold by Lemma \ref{lemma:1}. It is well known (see \cite{BG,Has}) that a compact K\"ahler nilmanifold is diffeomorphic to a torus. Hence $L_{(\bphi, 1)}$ should be diffeomorphic to the 
4-dimensional torus $T^4$, but this is absurd, since $b_1(T^4)=4$ and $b_1(L_{(\bphi, 1)})=2$.
\end{proof}

We also have the following result:
\begin{proposition}\label{main:prop2}
 The nilmanifold $L_{(\bphi, 1)}$ is not the product of a compact 3-dimensional manifold and a circle.
\end{proposition}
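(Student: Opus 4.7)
The plan is to derive a contradiction from a cohomological feature of $L_{(\bphi,1)}$ that is incompatible with any product decomposition of the form $N\times S^1$.

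First, I would compute the de Rham cohomology of $L_{(\bphi,1)}$ via Nomizu's theorem (Theorem \ref{Nomizu}), applied to the Chevalley--Eilenberg complex of $\fg$ governed by the structure equations \eqref{diff-left-inv}: $d\omega=d\beta=0$, $d\alpha=\beta\wedge\omega$, $d\eta=\alpha\wedge\beta$. A straightforward bookkeeping gives $b_1 = b_2 = b_3 = 2$ with
\[
H^1 = \langle[\omega],[\beta]\rangle,\qquad H^2 = \langle[\omega\wedge\alpha],[\eta\wedge\beta]\rangle,
\]
noting that the remaining closed $2$-forms $\omega\wedge\beta=-d\alpha$ and $\alpha\wedge\beta=d\eta$ are exact. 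This already uses Lemma \ref{lemma:1} for $b_1$.

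The crucial step is to verify that the cup product map $H^1(L_{(\bphi,1)};\bR)\otimes H^2(L_{(\bphi,1)};\bR)\to H^3(L_{(\bphi,1)};\bR)$ vanishes identically. Two of the four basic products are zero for trivial reasons (repeated factors in $[\omega]\cup[\omega\wedge\alpha]$ and $[\beta]\cup[\eta\wedge\beta]$), while the remaining two satisfy
\[
[\omega\wedge\eta\wedge\beta] = -[d(\eta\wedge\alpha)] \quad \text{and}\quad [\omega\wedge\alpha\wedge\beta] = -[d(\omega\wedge\eta)],
\]
so both are exact. Thus the whole map $H^1\otimes H^2\to H^3$ is zero.

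Now suppose, for contradiction, that $L_{(\bphi,1)}\cong N\times S^1$ for some compact $3$-manifold $N$. Since nilmanifolds are orientable, so is $N$. By Künneth, $b_1(N) = b_1(L_{(\bphi,1)})-1 = 1>0$. Poincar\'e duality on the closed oriented $3$-manifold $N$ asserts that the cup product pairing $H^1(N;\bR)\otimes H^2(N;\bR)\to H^3(N;\bR)\cong\bR$ is non-degenerate, so one can pick classes $[a]\in H^1(N;\bR)$ and $[b]\in H^2(N;\bR)$ with $[a]\cup[b]\neq 0$. Pulling back along the projection $p\colon N\times S^1\to N$ yields $p^*[a]\cup p^*[b] = p^*([a]\cup[b])\neq 0$ in $H^3(L_{(\bphi,1)};\bR)$, contradicting the previous step.

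I expect the main obstacle to be purely computational: keeping signs straight throughout the Chevalley--Eilenberg calculation and ensuring the two non-trivial cup products are correctly identified as exact forms rather than as genuine cohomology classes. Conceptually the argument is clean — it exploits the fact that $H^1$ and $H^2$ of this nilpotent Lie algebra come from rather ``disjoint'' generators so that their cup product has no room to land in $H^3$, whereas Poincar\'e duality forces a non-trivial $H^1\cup H^2$ pairing on any closed orientable 3-manifold with $b_1\geq 1$.
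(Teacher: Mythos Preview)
Your argument is correct and takes a genuinely different route from the paper. The paper argues that if $L_{(\bphi,1)}\cong N\times S^1$ then $\pi_1(N)$, being a subgroup of the nilpotent torsion-free group $\pi_1(L_{(\bphi,1)})$, is itself nilpotent and torsion-free; by Mal'tsev's theorem it is therefore the fundamental group of a compact nilmanifold, forcing $b_1(N)=b_1(\pi_1(N))\geq 2$, which contradicts $b_1(N)=1$ from K\"unneth. Your approach instead exploits the \emph{ring} structure of $H^\bullet(L_{(\bphi,1)};\bR)$: the vanishing of $H^1\cup H^2$ is incompatible with Poincar\'e duality on any closed orientable $3$-manifold factor having $b_1\geq 1$. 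Your route is more self-contained---it avoids Mal'tsev's realization theorem entirely and makes no use of the nilpotency of the hypothetical factor---at the cost of the explicit cup-product bookkeeping. The paper's route trades that computation for an appeal to structure theory, and its principle (``a factor of a nilmanifold must itself look like a nilmanifold, hence $b_1\geq 2$'') would apply verbatim to any nilmanifold with $b_1=2$, whereas your cup-product vanishing is specific to this Lie algebra.
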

\begin{proof}
By Lemma \ref{lemma:1}, we see that $b_1(L_{(\bphi, 1)})=2$. Assume $L_{(\bphi, 1)}$ is a product $M\times S^1$, where $M$ is a compact 3-dimensional manifold. Since $L_{(\bphi, 1)}$ is a compact 
nilmanifold, $\pi_1(L_{(\bphi, 1)})$ is nilpotent and torsion-free. Then $\pi_1(M)\subset\pi_1(L_{(\bphi, 1)})$ is also nilpotent and torsion-free. In \cite{Mal'tsev}, Mal'tsev showed 
that for such a group $\pi_1(M)$ there exists a real nilpotent Lie group $K$ such that $P=\pi_1(M)\backslash K$ is a nilmanifold. It is well known that a compact nilmanifold has 
first Betti number at least 2, hence $b_1(P)\geq 2$. Now $P$ is an aspherical manifold; in this case, $H^*(P;\bZ)$ is isomorphic to the group cohomology $H^*(\pi_1(M);\bZ)$ (see for instance \cite[page 40]{Br}). Hence $b_1(M) \geq 2$. Now $M$ is also an aspherical space with fundamental group $\pi_1(M)$, hence, 
by the same token, $b_1(\pi_1(M))\geq 2$. However, by the K\"unneth formula applied to $L_{(\bphi, 1)}=M\times S^1$, we get $b_1(M)=1$. Alternatively, we could apply \cite[Theorem 3]{FHT} directly to 
our manifold $M$.
\end{proof}

Now, from \thmref{first:example} and Proposition \ref{main:prop2}, we deduce the following result.

\begin{corollary}\label{Main:3}
There exists a compact, 4-dimensional nilmanifold, not diffeomorphic to the product of a compact 3-manifold and a circle, which has a locally conformal symplectic structure but no locally conformal 
K\"ahler metric.
\end{corollary}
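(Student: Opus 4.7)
The plan is to exhibit the 4-dimensional compact nilmanifold $M = L_{(\bphi,1)}$ constructed above as the witness, and then invoke the two preceding results to cover the two negative clauses in the statement. The manifold $M$ is the mapping torus of the Heisenberg nilmanifold $L = \Gamma \backslash H$ by the strict contactomorphism $\bphi$ induced from $\phi_1$; Lemma \ref{lemma:1} already confirms that $M$ is a compact $4$-dimensional nilmanifold equipped with a lcs structure of the first kind, so the existence part of the corollary is immediate once this example is proposed.

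For the absence of a lcK metric, I would simply cite Theorem \ref{first:example}: on $M$ an lcK metric, combined with the even first Betti number $b_1(M) = 2$, would by the Kodaira--Enriques classification upgrade to a K\"ahler metric; but a compact K\"ahler nilmanifold must be diffeomorphic to a torus, and $b_1(T^4) = 4 \neq 2$. For the non-product assertion I would invoke Proposition \ref{main:prop2}, where the proof again hinges on a Betti number count: if $M \cong N \times S^1$ with $N$ a compact $3$-manifold, then $\pi_1(N)$ is torsion-free and nilpotent, hence by Mal'tsev's theorem realizes $N$ as a $3$-nilmanifold (via an aspherical identification) with $b_1 \geq 2$, whereas K\"unneth forces $b_1(N) = 1$.

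Since both Theorem \ref{first:example} and Proposition \ref{main:prop2} are already in hand, the corollary is a pure assembly statement: pick $M = L_{(\bphi,1)}$ and cite both. The real conceptual work lies upstream --- in the explicit construction of the contactomorphism $\phi$ and its invariant lattice $\Gamma$, in the computation $b_1(M) = 2$ via Nomizu, and in the two Betti-number obstruction arguments --- so I expect no essential obstacle at the level of the corollary itself; the only thing to verify is that \emph{the same} manifold simultaneously witnesses both negative properties, which is automatic because both earlier results are stated for exactly this $L_{(\bphi,1)}$.
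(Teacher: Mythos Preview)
Your proposal is correct and matches the paper's approach exactly: the corollary is stated immediately after Proposition \ref{main:prop2} with the remark that it follows from Theorem \ref{first:example} and Proposition \ref{main:prop2}, and the witness is precisely $L_{(\bphi,1)}$. Your recap of the upstream arguments is accurate but not needed at this point, since both results are already established for this specific manifold.
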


\begin{remark}
 The manifold $L_{(\bphi, 1)}$ also admits a symplectic structure $\sigma$, coming from a left-invariant symplectic structure on $G$. In terms of the basis of $\fg^*$ 
given above, $\sigma=\omega\wedge\alpha+\eta\wedge\beta$. $L_{(\bphi, 1)}$ can also be endowed with a left-invariant, non-integrable almost complex structure, given in terms of the basis 
$\{U,V,A,B\}$ of $\fg$, by $J(U)=A$, $J(V)=B$ and a left-invariant metric $g$ which makes such basis orthonormal. By the Gray-Hervella classification of almost Hermitian manifolds in dimension 4 (see Table II in \cite{GH}) 
there is a line $\mathcal{W}_2$ corresponding to the almost K\"ahler case and a line $\mathcal{W}_4$ corresponding to the complex case, intersecting in the origin, which is the K\"ahler case.
 With the almost Hermitian structure $(g,J,\sigma)$, $L_{(\bphi, 1)}$ lies 
on the line $\mathcal{W}_2$.
\end{remark}

As we remarked in the introduction, Bande and Kotschick (\cite{Bande-Kotschick}) gave a method to construct examples of compact manifolds with lcs structures of the first kind which carry no 
lcK metrics; we describe it briefly. Suppose that $M$ is an oriented compact manifold of dimension $3$. By a result of Martinet, see \cite{Mar}, $M$ admits a contact structure and, then, the 
product manifold $M\times S^1$ is locally conformal symplectic of the first kind (see Example \ref{ex:1}). On the other hand, using some results on compact complex surfaces, the authors show that $M$ may be chosen in 
such a way that the product manifold $M \times S^1$ admits no complex structures (this happens, for instance, if $M$ is hyperbolic) and, in particular, no lcK structures. Moreover, by the results of 
\cite{Friedl_Vidussi}, only for special choices of $M$ (those who fiber over a circle) can $M\times S^1$ have a symplectic structure. Therefore, for suitably chosen $M$, one obtains examples of locally conformal symplectic 
structures on $M\times S^1$, with no symplectic structure. We can then ask:

\begin{quote}
{\em Is there a compact, almost Hermitian lcs 4-manifold, not the product of a 3-manifold and a circle, which admits no complex and no symplectic structure?}
\end{quote}

Concerning this question, we remark that certain Inoue surfaces, discovered by Belgun (see \cite[Theorem 7]{Belgun}), do not carry any lcK metric compatible with the \emph{fixed} 
complex structure. One may, however, fix the underlying smooth manifold 
and vary the complex structure. For example, the smooth manifolds underlying Inoue surfaces are all diffeomorphic to a certain solvmanifold, quotient of the completely solvable Lie group $G$ whose Lie algebra is 
$(0,12,-13,23)$. For particular choices of the complex structure, such manifolds \emph{do} carry lcK metrics.

\subsection{Dimension 6}\label{Dimension:6}

In this section, we describe two 6-dimensional examples of lcs nilmanifolds which were announced in lines 2 and 3 of Table \ref{table:0}.

We start by recalling a result of Sawai (see \cite[Main Theorem]{Sawai}) which completely characterizes lcK nilmanifolds with left-invariant complex structure. For a description of the Lie algebra of the 
Heisenberg group, we refer to the proof of Lemma \ref{Betti_Heisenberg} below.

\begin{theorem}\label{Sawai}
Let $(M^{2n},J)$ be a non-toral compact nilmanifold with a left-invariant complex structure $J$. If $(M,J)$ carries a locally conformal K\"ahler metric, then it is biholomorphic to a 
quotient of $(\mathrm{Heis}_{2n-1}\times\bR,J_0)$, where $\mathrm{Heis}_{2n-1}$ is the Heisenberg group of dimension $2n-1$.
\end{theorem}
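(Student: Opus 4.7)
The approach splits naturally into two stages: (i) reduce an arbitrary lcK metric on the nilmanifold to a left-invariant one, and (ii) classify nilpotent Lie algebras admitting a left-invariant lcK structure. For stage (i), write $M=\Gamma\backslash G$ with $G$ simply connected nilpotent, hence unimodular, so $M$ carries a bi-invariant probability measure $\mu$. I would symmetrize the given lcK data $(g,\Phi,\omega)$ by averaging over left translates against $\mu$: the resulting tensors $\tilde g,\tilde\Phi,\tilde\omega$ are left-invariant, $\tilde g$ remains $J$-Hermitian because $J$ is already left-invariant, and the relation $d\Phi=\omega\wedge\Phi$ descends to $d\tilde\Phi=\tilde\omega\wedge\tilde\Phi$ (using that averaging commutes with $d$ and that $\omega$ is already closed, so its wedge with $\Phi$ averages correctly). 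The non-toral hypothesis guarantees $\tilde\omega\neq 0$, giving a left-invariant lcK structure on $G$, equivalently an lcK structure on the Lie algebra $\fg$.

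For stage (ii), I work in $\fg$ of dimension $2n$. By Corollary \ref{cor:Dixmier}, $H^p(\fg;W_\omega)=0$ for all $p$, so $[\Phi]_\omega=0$ in Morse--Novikov cohomology and there is $\eta\in\fg^*$ with $\Phi=d\eta-\omega\wedge\eta$. Non-degeneracy of $\Phi^n$ forces $\omega\wedge\eta\wedge(d\eta)^{n-1}$ to be a volume form, so the Lie-algebra analogue of Proposition \ref{lcs_1_kind} shows $(\Phi,\omega)$ is of the first kind. The subspace $\fh:=\ker\omega$ is an ideal of codimension $1$ (since $d\omega=0$ gives $[\fg,\fg]\subset\fh$), and $\eta|_\fh$ is a contact form on the nilpotent Lie algebra $\fh$ of dimension $2n-1$. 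A classical result asserts that the only nilpotent Lie algebra carrying a contact form is the Heisenberg algebra $\mathfrak{heis}_{2n-1}$, hence $\fh\cong\mathfrak{heis}_{2n-1}$.

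It remains to upgrade the splitting $\fg=\fh\oplus\bR A$ (where $A$ is the metric dual of $\omega$) from a semidirect to a direct Lie-algebra sum, which boils down to showing $B:=JA$ is central in $\fg$, and then to matching $J$ with the standard complex structure $J_0$ on $\mathfrak{heis}_{2n-1}\oplus\bR$. This is achieved by evaluating $d\Phi=\omega\wedge\Phi$ on triples involving $A$ and $B$, combining the Hermitian identity $\Phi(X,Y)=g(JX,Y)$, the identities $\imath_A\Phi=-\eta$ and $\imath_B\Phi=\omega$, and nilpotency of $\fg$ to conclude $[B,X]=0$ for all $X$. I expect this centrality step to be the main technical obstacle: the single lcK relation by itself is underdetermined, and one must leverage the descending central series of $\fg$ (so that iterated Jacobi identities eventually kill brackets) jointly with $J$-compatibility. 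Once $B$ is central, the orthogonal complement of $\mathrm{span}(A,B)$ inside $\fh$ completes the Heisenberg ideal, the splitting $\fg\cong\mathfrak{heis}_{2n-1}\oplus\bR$ is a direct sum, and the $J$-action is forced to match $J_0$. Passing back to $G$ and $\Gamma\backslash G$ yields the claimed biholomorphism.
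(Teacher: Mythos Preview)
The paper does not prove this theorem: it is quoted from Sawai \cite{Sawai} and used as a black box in Section~\ref{Dimension:6} and Section~\ref{Dimension:higher}. So there is no ``paper's own proof'' to compare against.

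That said, your sketch contains a genuine error. In stage~(ii) you assert that ``the only nilpotent Lie algebra carrying a contact form is the Heisenberg algebra''. This is false: the paper itself exhibits several non-Heisenberg nilpotent contact Lie algebras, for instance the $5$-dimensional algebra $\fh=(0,0,0,12,14+23)$ in Section~\ref{Dimension:6} (with $b_1=3$, whereas $\mathfrak{heis}_5$ has $b_1=4$), and more generally the algebras $\fh_{2n-1}$ of Section~\ref{Dimension:higher}. Every lcs nilpotent Lie algebra in Table~\ref{table:1} arises from a nilpotent contact ideal, and only one of them is $\mathfrak{heis}_5\oplus\bR$. So the reduction ``lcs of the first kind $\Rightarrow$ contact ideal $\Rightarrow$ Heisenberg'' cannot work without bringing the integrable complex structure $J$ and the Hermitian metric into the argument much earlier and much more forcefully than you do. Your final paragraph gestures at this, but the heavy lifting is precisely there, not in the contact reduction.

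There is also a smaller gap in stage~(i): averaging $\omega$ and $\Phi$ simultaneously does not preserve $d\Phi=\omega\wedge\Phi$, because the average of $\omega\wedge\Phi$ is not $\tilde\omega\wedge\tilde\Phi$. The standard fix is to first replace $\omega$ by its left-invariant representative in the same de~Rham class (using Nomizu's theorem) via a conformal change of $\Phi$, and only then average $\Phi$; with $\omega$ already left-invariant, averaging does commute with wedging by $\omega$.
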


We consider the 5-dimensional Lie group 
\[
H=\left\{
\begin{pmatrix} 
1 & y & t & w \\
0 & 1 & x & z\\
0 & 0 & 1 & x\\
0 & 0 & 0 & 1\\
\end{pmatrix} \ | \ x,y,z,t,w\in\bR\right\},
\]
which is diffeomorphic to $\bR^5$ as a manifold; we denote a point of $H$ by $(x,y,z,t,w)$. 

For the first example, we choose the following basis of left-invariant 1-forms:
\[
\alpha=d x,\quad \beta=d y,\quad \gamma= xdx-dz,\quad \delta=d t-ydx \quad \mathrm{and} \quad \eta=d w-yd z+(xy-t)dx
\]
with differentials
\[
d\alpha=0,\quad d\beta=0,\quad d\gamma=0,\quad d\delta=\alpha\wedge\beta \quad \mathrm{and} \quad 
d\eta=\alpha\wedge\delta+\beta\wedge\gamma.
\]
In particular, $(H,\eta)$ is a contact manifold. The Lie algebra of $H$ is $\fh=(0,0,0,12,14+23)$. $\fh$ is nilpotent and isomorphic to $L_{5,3}$ (notation from 
\cite{BM}). Hence $H$ is a connected and simply connected nilpotent Lie group.

The subgroup $\Gamma=\{(m,n,p,q,r)\in H \ | \ m,n,p,q,r\in\bZ\}\subset H$ is a lattice. Since 
$\eta$ is left-invariant, it descends to a contact form $\bar{\eta}$ on the compact nilmanifold $L=\Gamma\backslash H$. By Example \ref{ex:1}, the product $M= L\times S^1$ has a lcs structure of 
the first kind $(\bomega,\bar{\eta})$, where $\bomega$ is the angular form on $S^1$; here we are implicitly identifying $\bar{\eta}\in\Omega^1(L)$ with $\pi_1^*\bar{\eta}\in\Omega^1(M)$ and 
$\bomega\in\Omega^1(S^1)$ with $\pi_2^*\bomega\in\Omega^1(M)$.

\begin{theorem}\label{dim:6_first}
$M$ is a 6-dimensional lcs, complex and symplectic nilmanifold which admits no lcK structures (with left-invariant complex structures). Furthermore, $M$ carries no Vaisman metric.
\end{theorem}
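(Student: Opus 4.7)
The plan is to prove the six assertions separately, reserving a Betti-number argument for the two non-existence parts and treating the existence parts in order of decreasing triviality.

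Three of the existence claims are essentially immediate. Since $H\times\bR$ is a connected simply connected nilpotent Lie group of dimension $6$ and $\Gamma\times\bZ$ is a co-compact lattice, $M=L\times S^1$ is a $6$-dimensional compact nilmanifold. The lcs structure of the first kind $(\bar\omega,\bar\eta)$ is provided directly by the product construction of Example~\ref{ex:1}, using that $\bar\eta$ descends to a contact form on $L$. For a left-invariant symplectic form, I would propose
\[
\Phi=\alpha\wedge\eta-\gamma\wedge\delta+\beta\wedge\omega
\]
on $\fg=\fh\oplus\bR$, where $\omega$ is the generator dual to the $\bR$-summand. The structure equations immediately give $d\Phi=-\alpha\wedge(\alpha\wedge\delta+\beta\wedge\gamma)+\gamma\wedge\alpha\wedge\beta=0$ and $\Phi^3=6\,\alpha\wedge\beta\wedge\gamma\wedge\delta\wedge\eta\wedge\omega\neq 0$, so $\Phi$ descends to a symplectic form on $M$. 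For the left-invariant complex structure, I would produce a $3$-dimensional subspace $V\subset\fg^*\otimes\bC$ with $V\cap\bar V=0$ such that $dV\subset\Lambda^{2,0}\oplus\Lambda^{1,1}$ (no $(0,2)$-part), corresponding via Newlander--Nirenberg to an integrable $J$ on $\fg$; this $J$ then descends to $M$ via left-translation.

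The two non-existence claims both reduce to the computation $b_1(M)=4$. By Nomizu's theorem (Theorem~\ref{Nomizu}), $H^\bullet(M;\bR)\cong H^\bullet(\fg)$, and a glance at the structure equations shows that the closed left-invariant $1$-forms are precisely $\alpha,\beta,\gamma,\omega$. Since compact Vaisman manifolds have odd first Betti number (\cite{BMO,KS,VaismanE}), $M$ admits no Vaisman metric. If $M$ admitted a lcK metric with left-invariant complex structure, then Sawai's theorem (Theorem~\ref{Sawai}) would realize $(M,J)$ as biholomorphic, hence diffeomorphic, to a compact quotient of $\mathrm{Heis}_5\times\bR$. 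Applying Nomizu's theorem again to this quotient gives $b_1=b_1(\mathfrak{heis}_5\oplus\bR)=5$, contradicting $b_1(M)=4$.

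The step I expect to cost the most work is the explicit exhibition of an integrable left-invariant complex structure on the step-$3$ nilpotent algebra $\fg=(0,0,0,12,14+23,0)$: naive ``diagonal'' pairings of the real generators (patterned after the Heisenberg case) leave stubborn $(0,2)$-contributions coming from the term $e^{14}+e^{23}$ in $de^5$, so one must mix the real basis non-trivially before complexifying to make these contributions cancel. Everything else in the proof is either routine bookkeeping with the structure equations or a direct appeal to the cited theorems.
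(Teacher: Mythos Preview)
Your proposal is correct and matches the paper's proof almost step for step: the nilmanifold, lcs, and symplectic structures are handled identically (your $\Phi$ is the paper's $\rho$ up to the sign convention $-\gamma\wedge\delta=\delta\wedge\gamma$), and the two non-existence arguments via $b_1(M)=4$ together with Sawai's theorem and the odd-$b_1$ obstruction for Vaisman metrics are exactly what the paper does. The one difference is the complex structure: the paper does not construct $J$ explicitly but simply observes that $\fh\oplus\bR$ is isomorphic to the algebra $\fh_9$ in the classification of \cite{CFGU} and then invokes \cite[Theorem~3.3]{Sal}, which already records that $\fh_9$ admits a complex structure---so the step you anticipated as the hardest is dispatched by citation, and your proposed direct construction, while a perfectly valid alternative, is not needed.
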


\begin{proof}
$M$ is a nilmanifold, being the quotient of the connected, simply connected nilpotent Lie group $H\times\bR$ by the lattice $\Gamma\times\bZ$. The Lie algebra of $H\times\bR$ is 
$\fh\oplus\bR$, isomorphic to the Lie algebra $\fh_9$ of \cite{CFGU}; $\fh_9$ admits a complex structure by \cite[Theorem 3.3]{Sal} (compare also Table \ref{table:1} below). In particular, $M$ admits 
a left-invariant complex structure. To show that $M$ is symplectic, it is enough to find a left-invariant symplectic form on $H\times\bR$. Consider 
$\rho\coloneq \alpha\wedge\eta+\delta\wedge\gamma+\beta\wedge\omega\in\Omega^2(H\times\bR)$, with $\omega = dt$ the canonical $1$-form on $\mathbb{R}$. $\rho$ is left-invariant, closed and non-degenerate, 
hence descends to a symplectic form on $M$. By Theorem \ref{Nomizu}, $b_1(L)=b_1(\fh)=3$, hence $b_1(M)=4$ by the K\"unneth formula. By Lemma \ref{Betti_Heisenberg}, a quotient of 
$\mathrm{Heis}_5\times\bR$ has $b_1=5$, hence $M$ is not a quotient of $\mathrm{Heis}_5\times\bR$. By Theorem \ref{Sawai}, $M$ does not carry any lcK metric (with left-invariant complex 
structure). If $M$ carried a Vaisman metric, then $b_1(M)$ should be odd by \cite[Corollary 3.6]{VaismanE}.
\end{proof}

\begin{lemma}\label{Betti_Heisenberg}
 A $2n$-dimensional nilmanifold $N$, quotient of $\mathrm{Heis}_{2n-1}\times\bR$, has $b_1(N)=2n-1$.
\end{lemma}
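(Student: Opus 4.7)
The plan is to reduce the computation of $b_1(N)$ to a computation at the Lie algebra level via Nomizu's theorem (Theorem \ref{Nomizu}), and then carry out this computation by inspecting the Chevalley--Eilenberg complex of $\mathfrak{heis}_{2n-1} \oplus \mathbb{R}$ in low degrees.

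First I would set up notation. The Heisenberg Lie algebra $\mathfrak{heis}_{2n-1}$ has a basis $\{X_1,\dots,X_{n-1},Y_1,\dots,Y_{n-1},Z\}$ in which the only nontrivial brackets are $[X_i,Y_i]=Z$ for $i=1,\dots,n-1$. Adding a central generator $T$ spanning the extra $\mathbb{R}$ factor, the Lie algebra $\mathfrak{g}=\mathfrak{heis}_{2n-1}\oplus\mathbb{R}$ has dimension $2n$. In the dual basis $\{x^1,\dots,x^{n-1},y^1,\dots,y^{n-1},z,t\}$ of $\mathfrak{g}^*$, the Chevalley--Eilenberg differential reads
\[
dx^i=dy^i=dt=0,\qquad dz=-\sum_{i=1}^{n-1}x^i\wedge y^i.
\]
In particular $\mathfrak{g}$ is nilpotent and, as $N$ is by hypothesis a compact quotient of the connected simply connected nilpotent Lie group $\mathrm{Heis}_{2n-1}\times\mathbb{R}$, Nomizu's theorem applies and gives $b_1(N)=\dim H^1(\mathfrak{g};\mathbb{R})$.

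Next I would compute $H^1(\mathfrak{g};\mathbb{R})$ directly from the Chevalley--Eilenberg complex. Since $C^0(\mathfrak{g};\mathbb{R})=\mathbb{R}$ consists of constants (on which $d$ vanishes), $B^1(\mathfrak{g};\mathbb{R})=0$, so $H^1(\mathfrak{g};\mathbb{R})=Z^1(\mathfrak{g};\mathbb{R})$. A general $1$-form is
\[
\alpha=\sum_{i=1}^{n-1}a_i\,x^i+\sum_{i=1}^{n-1}b_i\,y^i+c\,z+e\,t,
\]
and $d\alpha=-c\sum_{i=1}^{n-1}x^i\wedge y^i$, which vanishes if and only if $c=0$. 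Thus the closed $1$-forms form the $(2n-1)$-dimensional subspace spanned by $\{x^1,\dots,x^{n-1},y^1,\dots,y^{n-1},t\}$, giving $\dim H^1(\mathfrak{g};\mathbb{R})=2n-1$.

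Combining with Nomizu's theorem yields $b_1(N)=2n-1$, as claimed. No real obstacle appears here: the whole content is that $z$ is the unique non-closed element in a natural basis of $\mathfrak{g}^*$, and Nomizu's theorem transfers this statement to $N$.
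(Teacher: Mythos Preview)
Your proof is correct and follows essentially the same approach as the paper: both reduce to the Chevalley--Eilenberg complex of $\mathfrak{heis}_{2n-1}\oplus\mathbb{R}$, observe that the only non-closed basis element of $\mathfrak{g}^*$ is $z$, and then invoke Nomizu's theorem. Your version merely spells out the computation of $Z^1(\mathfrak{g};\mathbb{R})$ a bit more explicitly.
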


\begin{proof}
The Lie algebra $\mathfrak{heis}_{2n-1}$ of $\mathrm{Heis}_{2n-1}$ is spanned by vectors $\{X_1,Y_1,\ldots,X_{n-1},Y_{n-1},Z\}$ with brackets $[X_i,Y_i]=Z$ for every 
$i=1,\ldots,n-1$. If $\{x_1,y_1,\ldots,x_{n-1},y_{n-1},z\}$ is the dual basis of $\mathfrak{heis}_{2n-1}^*$, then the Chevalley-Eilenberg complex of the Lie algebra 
$\mathfrak{heis}_{2n-1}^*\oplus\bR$ has differentials
\[
d x_i=d y_i=0 \ \forall \ i=1,\ldots,n-1, \quad d w=0 \quad \mathrm{and} \quad d z=-\sum_{i=1}^{n-1}x_i\wedge y_i,
\]
where $w$ generates the $\bR$-factor. This means that $\mathfrak{heis}_{2n-1}\oplus\bR$ has $b_1=2n-1$. By Theorem \ref{Nomizu}, the same is true for every compact quotient of 
$\mathrm{Heis}_{2n-1}\times\bR$. 
\end{proof}

For the second example, we take
\[
\alpha=d x, \quad \beta=d y, \quad \gamma= dz-xdx, \quad \delta=d t-ydx \quad \mathrm{and} \quad \theta=d w-yd z+(xy-t)dx
\]
as a basis of left-invariant 1-forms on $H$; hence,
\[
d\alpha=0,\quad d\beta=0,\quad d\gamma=0,\quad d\delta=\alpha\wedge\beta \quad \mathrm{and} \quad 
d\theta=\alpha\wedge\delta-\beta\wedge\gamma.
\]
As before, $\theta$ is a contact form on $H$. A diffeomorphism $\phi\colon H\to H$ will be a strict 
contactomorphism provided
\begin{equation}\label{strict_contactomorphism}
 d(w\circ\phi)-dw=(y\circ\phi)d(z\circ\phi)-ydz-((x\circ\phi)(y\circ\phi)-(t\circ\phi))d(x\circ\phi)+(xy-t)dx.
\end{equation}
If we assume $y\circ\phi=y+f(x)$, $z\circ\phi=z+g(x)+h(y)$, $t\circ\phi=t+i(x)+j(y)+k(z)$ and $w\circ\phi=w+\ell(x)+m(y)+n(x,y)+o(x,z)$,
then the functions
\begin{align*}
 f(x)  &= x   &   g(x)  & =\frac{x}{2} & h(y) & = y & i(x) & = \frac{x}{6} & j(x) & = \frac{y}{2}\\
 \ell(x) &= \frac{x^2-x^3}{3} & m(y) &= \frac{y^2}{2} & n(x,y) &=xy & o(x,z) &=xz
\end{align*}
verify \eqref{strict_contactomorphism} and $\phi_s\colon H\to H$, mapping $(x,y,z,t,w)$ to
\[
 \left(x,y+sx,z+sy+\frac{1}{2}s^2x,t+sz+\frac{1}{2}s^2y+\frac{1}{6}s^3x,w+sxz+\frac{1}{2}sy^2-\frac{1}{3}sx^3+s^2xy+\frac{1}{3}s^3x^2\right)
\]
is a strict contactomorphism for every $s \in \mathbb{R}$. More precisely:

\begin{lemma}\label{contact-Lie-iso:dim_6}
In the above situation, $\{\phi_s\}_{s\in \bR}$ is a $1$-parameter group of strict contactomorphisms of $(H,\theta)$ and each $\phi_s$ is a Lie group isomorphism.
\end{lemma}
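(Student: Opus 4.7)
The lemma asserts three things about $\{\phi_s\}_{s\in\bR}$: that it forms a one-parameter group, that each $\phi_s$ is a strict contactomorphism, and that each $\phi_s$ is a Lie group automorphism. My plan is to reduce all three to computations involving the infinitesimal generator $X := \frac{d}{ds}\big|_{s=0}\phi_s$ and the linear map on $\mathfrak{h}$ it induces.

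Differentiating the explicit formula for $\phi_s$ at $s=0$ yields
\[ X \,=\, x\,\partial_y \,+\, y\,\partial_z \,+\, z\,\partial_t \,+\, \bigl(xz \,+\, \tfrac{1}{2}y^2 \,-\, \tfrac{1}{3}x^3\bigr)\,\partial_w. \]
Because every component of $\phi_s(x,y,z,t,w)$ is polynomial in $s$, the vector field $X$ is complete, and the group property $\phi_{s+s'}=\phi_s\circ\phi_{s'}$ can be verified either by direct substitution in the five coordinates (the coefficients organize themselves into $(s+s')^k/k!$ up to $k=3$) or, more efficiently, by checking that $\phi_s$ is the unique solution of the ODE $\frac{d}{ds}\phi_s = X\circ\phi_s$ with $\phi_0=\mathrm{Id}$. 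In particular each $\phi_s$ is a diffeomorphism with inverse $\phi_{-s}$.

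For the strict contactomorphism property, by the one-parameter group law it suffices to show $\mathcal{L}_X\theta = 0$. Using Cartan's formula $\mathcal{L}_X\theta = d(\iota_X\theta)+\iota_X d\theta$ together with $\theta = dw - y\,dz + (xy-t)\,dx$ and $d\theta = \alpha\wedge\delta - \beta\wedge\gamma$, this is a finite calculation.

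For the Lie group automorphism property, $\phi_s$ fixes the identity $\fe=(0,0,0,0,0)$ by inspection. Since $H$ is connected and simply connected, it then suffices to show that $X$ is a multiplicative vector field in the sense of Section~\ref{multiplicative}; equivalently, that the linear endomorphism of $\mathfrak{h}$ induced from $X$ through \eqref{der-induced} is a derivation. Reading this endomorphism off at the identity gives a nilpotent map that sends the left-invariant vector fields dual to $\alpha,\beta,\gamma$ respectively to those dual to $\beta,\gamma,\delta$, and annihilates the vector fields dual to $\delta$ and $\theta$. The three non-trivial brackets of $\mathfrak{h}$, read off from $d\delta=\alpha\wedge\beta$ and $d\theta=\alpha\wedge\delta-\beta\wedge\gamma$, are easily seen to be compatible with this assignment via the Leibniz rule, and a check against the vanishing brackets (only $[A,C]=0$ is non-trivial, where the two contributions $[B,C]$ and $[A,D_{\delta}]$ cancel) completes the argument. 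The main obstacle is purely computational: the cubic and quadratic polynomials in the $w$-component of $\phi_s$ make both $\phi_{s+s'}=\phi_s\circ\phi_{s'}$ and $\mathcal{L}_X\theta=0$ slightly tedious to verify, though no conceptual issues arise once the infinitesimal picture is adopted.
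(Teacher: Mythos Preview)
The paper leaves this lemma without proof (by analogy with Lemma~\ref{contact_Lie_iso}, which is introduced with ``it is easy to prove that''), so there is no detailed argument to compare against; the intended method is direct verification using the explicit coordinate formulas. Your infinitesimal approach via the generator $X$ is cleaner and works perfectly for the one-parameter group property (the ODE argument is valid) and for the strict contactomorphism property (the computation $\mathcal{L}_X\theta=0$ goes through exactly as you describe).

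There is, however, a genuine gap in the automorphism argument. You claim that $X$ being multiplicative is \emph{equivalent} to the linearisation at $\fe$ being a derivation of $\fh$, and you then only verify the Leibniz rule for that linearisation. This equivalence is false: on the abelian group $H=\bR^2$ the vector field $X=x^2\partial_y$ vanishes at the origin with zero linearisation (trivially a derivation), yet its flow $\phi_s(x,y)=(x,y+sx^2)$ is not additive. Equation~\eqref{der-induced} only makes sense once you already know $[\lvec{Y},X]$ is left-invariant for each $Y\in\fh$; it does not give you multiplicativity for free from the value at $\fe$. To close the gap you must either verify that $[\lvec{Y},X]$ is a \emph{left-invariant} vector field for each of the five basis elements (a short computation with the explicit $A,B,C,D_\delta,R$ and $X$; once this holds, the flow of $X$ preserves left-invariant fields, hence right translations, hence is a homomorphism, and the derivation property is then automatic from Jacobi), or simply check $\phi_s(gh)=\phi_s(g)\phi_s(h)$ directly from the group law $(x,y,z,t,w)\cdot(x',y',z',t',w')=(x+x',\,y+y',\,z+z'+xx',\,t+t'+yx',\,w+w'+yz'+tx')$, which is the route the paper implicitly takes.
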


The subgroup
\[
 \Gamma=\{(6m,2n,p,q,r)\in H \ | \ m,n,p,q,r\in\bZ\}\subset H
\]
is a lattice and we have $\phi_s(\Gamma)=\Gamma$, for $s \in \bZ$. Hence $L\coloneq\Gamma\backslash H$ is a compact nilmanifold, $\theta$ induces a contact form $\btheta$ on $L$ 
and $\phi_1$ descends to a strict contactomorphism $\bphi\colon L\to L$. By Example \ref{ex:2}, the mapping torus $L_{(\bphi,1)}$ of $L$ by the pair $(\bphi, 1)$,
\[
 L_{(\bphi, 1)}=\frac{ L \times \bR}{\sim_{(\bphi, 1)}}
\]
is a 6-dimensional compact lcs manifold of the first kind.

We show that $L_{(\bphi, 1)}$ has the structure of a compact nilmanifold. In fact, by Lemma 
\ref{contact-Lie-iso:dim_6}, we obtain a representation $\phi\colon \bR\to\Aut(H)$, $s\mapsto \phi_s$, hence we can form the semidirect product $G= H \rtimes_{\phi}\bR$, whose 
group structure is given by
\[
 \begin{pmatrix}
 x \\ y\\ z\\ t\\ w\\ s
\end{pmatrix}\cdot
 \begin{pmatrix}
 x' \\ y'\\ z'\\ t'\\ w'\\ s'
\end{pmatrix}=
 \begin{pmatrix}
 x+x' \\ y+y'+sx'\\ z+z'+xx'+sy'+\frac{1}{2}s^2x'\\ t+t'+yx'+sz'+\frac{1}{2}s^2y'+\frac{1}{6}s^3x'\\ w+w'+tx'+yz'+s(x'z'+yy'+\frac{1}{2}(y')^2-\frac{1}{3}(x')^3)+s^2(x'y'+\frac{1}{2}yx')+\frac{1}{3}s^3(x')^2\\ s+s'
\end{pmatrix}
\]

A basis for left-invariant 1-forms on $G$ is given by
\begin{itemize}
 \item $\omega=d s$;
 \item $\alpha=d x$;
 \item $\beta=d y-sdx$;
 \item $\gamma= dz-sdy+(\frac{s^2}{2}-x)dx$;
 \item $\delta=d t-sdz+\frac{s^2}{2}dy+(xs-y-\frac{s^3}{6})dx$;
 \item $\eta=d w-ydz-(t-xy)dx$;
\end{itemize}
with
\[
d\omega=0=d\alpha, \quad d\beta=-\omega\wedge\alpha, \quad d\gamma=-\omega\wedge\beta, \quad d\delta=-\omega\wedge\gamma+\alpha\wedge\beta \quad \mathrm{and} \quad 
d\eta=\alpha\wedge\delta-\beta\wedge\gamma.
\]
In particular, the Lie algebra $\fg$ of $G$ is isomorphic to the nilpotent Lie algebra $L_{6,22}$ (resp.\ $\fh_{32}$) in the notation of \cite{BM} (resp.\ \cite{CFGU}). 
Hence $G$ is a connected, simply connected nilpotent Lie group. Moreover, arguing as in Section \ref{Dimension:4}, $\Xi=\Gamma\rtimes_\phi\bZ\subset G$ is a lattice, hence $M=\Xi\backslash G$ is a 
compact nilmanifold, diffeomorphic to the mapping torus $L_{(\bphi,1)}$.

\begin{remark}
Note that, under the identification between $L_{(\bphi, 1)}$ and $M$, the lcs structure of the first kind on $L_{(\bphi, 1)}$ is just the pair $(\bomega, \bar{\eta})$, where $\bomega$ and 
$\bar{\eta}$ are the 1-forms on $M$ induced by the left-invariant 1-forms $\omega$ and $\eta$, respectively, on $G$.
\end{remark}

\begin{theorem}\label{dim:6_second}
$M$ is a compact, complex and lcs nilmanifold which does not admit symplectic, lcK (with left-invariant complex structure) or Vaisman structures.
\end{theorem}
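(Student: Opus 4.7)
The plan is to establish the three positive attributes (compact nilmanifold, lcs of the first kind, complex) and the three negative ones (no symplectic, no lcK with left-invariant complex structure, no Vaisman) separately, constantly using Nomizu's theorem to trade topological statements about $M$ for linear-algebraic statements about $\fg$. That $M$ is a compact nilmanifold equipped with an lcs structure of the first kind has already been established by the mapping-torus/semidirect-product construction: $M \cong \Xi\backslash G$ is compact, and the lcs structure is induced by the left-invariant pair $(\omega,\eta)$ on $G$, with $\omega$ the Lee form and $\eta$ the anti-Lee form (Proposition \ref{lcs_1_kind}).

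To show $M$ is complex, I would descend a left-invariant complex structure from $G$. Since $\fg \cong \fh_{32}$ in the notation of \cite{CFGU}, and this algebra appears in Salamon's classification \cite{Sal} of $6$-dimensional nilpotent Lie algebras admitting complex structures, we obtain a left-invariant $J$ on $G$, descending to $M$. Alternatively, one writes down an explicit endomorphism $J$ of $\fg$ with $J^2=-\Id$ and checks that its Nijenhuis tensor vanishes by a short direct computation in the given basis.

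I would then compute $b_1(M)$ via Nomizu. Reading off the differentials above, only $\omega$ and $\alpha$ are closed in the distinguished basis of $\fg^*$, and neither is exact, so $b_1(M)=b_1(\fg)=2$. Two consequences fall out at once. First, $b_1(M)$ is even, so by \cite[Corollary 3.6]{VaismanE} $M$ admits no Vaisman metric. Second, $b_1(M)=2\ne 5$, so by Lemma \ref{Betti_Heisenberg} $M$ is not diffeomorphic, let alone biholomorphic, to a quotient of $\mathrm{Heis}_5\times\bR$; Theorem \ref{Sawai} then rules out lcK metrics with left-invariant complex structure.

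The remaining, and most laborious, point is the absence of a symplectic form. Here I would again reduce to the Lie-algebra level: if $\Phi$ were symplectic on $M$ then $[\Phi]^3\ne 0$ in $H^6(M;\bR)\cong H^6(\fg)\cong\bR$, where the last identification is Nomizu's theorem in top degree. The class $[\Phi]$ is represented by a closed left-invariant $2$-form $\Phi_0\in\Lambda^2\fg^*$, and $[\Phi_0]^3\ne 0$ in $H^6(\fg)$ is equivalent, since $H^6(\fg)$ is spanned by any volume form, to $\Phi_0^3\ne 0$, i.e., to $\Phi_0$ being non-degenerate. Thus it is enough to verify that $\fg$ admits no closed non-degenerate $2$-form. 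The hard part is this verification: one writes a general $\Phi_0=\sum_{i<j}a_{ij}\,e^i\wedge e^j$, imposes the four linear equations $d\Phi_0=0$ coming from $d\beta,d\gamma,d\delta,d\eta$, and then checks that $\Phi_0^3=0$ identically in the surviving parameters; a cleaner alternative is to appeal to the known classification of symplectic $6$-dimensional nilpotent Lie algebras and observe that $\fh_{32}$ does not appear on the list.
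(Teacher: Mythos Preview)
Your argument contains one factual error that breaks the ``complex'' claim and, incidentally, reveals a typo in the theorem statement itself. You assert that $\fg\cong\fh_{32}$ \emph{appears} in Salamon's list of $6$-dimensional nilpotent Lie algebras admitting complex structures; in fact it does \emph{not}. Salamon's Theorem~3.3 (and Table~\ref{table:1} of this paper) record that $\fh_{32}$ carries \emph{no} complex structure. So you cannot descend a left-invariant $J$ to $M$, and the ``alternative'' explicit computation you offer would fail as well. The word ``complex'' in the theorem statement is a slip: compare Table~\ref{table:0}, where the entry for this example has $\times$ under ``complex\textsuperscript{$\dagger$}'' and ``?'' under ``complex''. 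The paper's own proof never establishes that $M$ is complex; on the contrary, it \emph{uses} the absence of any complex structure on $\fg$.

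This also means your route to ``no lcK with left-invariant $J$'' and the paper's route diverge. You argue via Sawai: $b_1(M)=2\neq 5$, so $M$ is not a quotient of $\mathrm{Heis}_5\times\bR$, hence no lcK with left-invariant complex structure. This is correct and would be the natural argument \emph{if} $\fg$ did admit some left-invariant $J$ (as in Theorem~\ref{dim:6_first}). The paper takes the shorter path available here: since $\fg$ has no complex structure at all, there is a fortiori no lcK with left-invariant $J$, and Sawai is not needed.

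Your treatment of the remaining points is fine. The reduction of the symplectic question to $\fg$ via Nomizu and the top-degree cohomology is correct and amounts to reproving \cite[Lemma~2]{Has}, which the paper simply cites; both then finish by noting $\fh_{32}$ is absent from the list of symplectic $6$-dimensional nilpotent Lie algebras. The Vaisman obstruction via $b_1(M)=2$ even is identical to the paper's.
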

\begin{proof}
The lcs structure of $M$ is clear from the above discussion. By \cite[Theorem 3.3]{Sal} the Lie algebra $\fg$ of $G$ does not admit any complex structure. 
Hence $M$ does not admit any lcK structure with left-invariant complex structure. Next, assume that $M$ admits a symplectic structure. By \cite[Lemma 2]{Has}, $M$ also 
admits a left-invariant symplectic structure, i.e.\ $\fg$ is a symplectic Lie algebra. But this is not the case, according to \cite[Section 7]{BM}. Hence $M$ does not admit any symplectic structure.
Clearly $b_1(\fg)=2$, hence, again by Nomizu Theorem, $b_1(M)$ is also equal to 2. Therefore $M$ does not carry any Vaisman structure  by \cite[Corollary 3.6]{VaismanE}.
\end{proof}

\subsection{A higher dimensional example}\label{Dimension:higher}

In this section we construct, for each $n\geq 3$, a $2n$-dimensional nilmanifold $M_{2n}$. For $n=3$, $M_6$ is isomorphic to the nilmanifold described in \thmref{dim:6_first}. For $n\geq 4$, 
$M_{2n}$ admits a lcs structure of the first kind, a complex structure, but no symplectic structures, no lcK structures (with 
left-invariant complex structure) and no Vaisman structures.

Consider the $(2n-1)$-dimensional Lie group
\[
H_{2n-1}=\left\{
\begin{pmatrix} 
1 & x_{n-2} & x_{n-3} & \ldots & x_1 & y_{n-1} & w \\
0 & 1 & 0 & \ldots & 0 & 0 & y_{n-2}\\
0 & 0 & 1 & \ldots & 0 & 0 & y_{n-3}\\
\vdots & \vdots & \vdots & \ddots & \ldots &\ldots & \vdots\\
0 & \ldots & \ldots & \ldots & 1 & x_{n-1} & y_1\\
0 & \ldots & \ldots & \ldots & 0 & 1 & x_{n-1}\\
0 & \ldots & \ldots & \ldots & 0 & 0 & 1\\
\end{pmatrix} \ | \ x_i,y_i,w\in\bR, i=1,\ldots,n-1\right\}. 
\]
As a manifold, $H_{2n-1}$ is diffeomorphic to $\bR^{2n-1}$. Taking global coordinates $(x_1,y_1,\ldots,x_{n-1},y_{n-1},w)$, the multiplication in $H_{2n-1}$ is given by
\[
\begin{array}{c}
(x_1, y_1, \dots, x_{n-1}, y_{n-1}, w)\cdot( x'_1, y'_1, \dots, x'_{n-1}, y'_{n-1}, w') \\
= (x_1 + x'_1, y_1 + y'_1 + x_{n-1}x'_{n-1}, x_2 + x'_2, y_2 + y'_2, \dots, x_{n-2} + x'_{n-2}, y_{n-2} + y'_{n-2}, \\
x_{n-1} + x'_{n-1}, y_{n-1} + y'_{n-1} + x_1x'_{n-1}, w + w' + x_{n-2}y'_{n-2} + x_{n-3}y'_{n-3} + \dots + x_1y'_1 + y_{n-1}x'_{n-1}).
\end{array}
\]
Moreover, one may compute a basis of left-invariant forms on $H_{2n-1}$:
\begin{itemize}
 \item $\alpha_i=-dx_i$, $i=1,\ldots,n-2$;
 \item $\alpha_{n-1}=dx_{n-1}$;
 \item $\beta_1=dy_1-x_{n-1}dx_{n-1}$;
 \item $\beta_i=dy_i$, $i=2,\ldots,n-2$;
 \item $\beta_{n-1}=dy_{n-1}-x_1dx_{n-1}$;
 \item $\eta=dw-x_1(dy_1-x_{n-1}dx_{n-1})-\sum_{i=2}^{n-2}x_idy_i-y_{n-1}dx_{n-1}$.
\end{itemize}

These satisfy:
\begin{itemize}
 \item $d\alpha_i=0$, $i=1,\ldots,n-1$;
 \item $d\beta_i=0$, $i=1,\ldots,n-2$;
 \item $d\beta_{n-1}=\alpha_1\wedge\alpha_{n-1}$;
 \item $d\eta=\sum_{i=1}^{n-1}\alpha_i\wedge\beta_i$.
\end{itemize}
In particular, $\fh_{2n-1}$, the Lie algebra of $H_{2n-1}$, is a nilpotent Lie algebra. For $n=3$, it is isomorphic to the Lie algebra $L_{5,3}$ which appeared in Section \ref{Dimension:6}.

Notice that $(H_{2n-1},\eta)$ is a contact manifold. The subgroup $\Gamma_{2n-1}=\{(p_1,q_1,\ldots,p_{n-1},q_{n-1},r)\in H_{2n-1} \ | \ p_i,q_i,r\in\bZ\}\subset H_{2n-1}$ is a lattice. Since 
$\eta$ is left-invariant, it descends to a contact form $\bar{\eta}$ on the nilmanifold $L_{2n-1}=\Gamma_{2n-1}\backslash H_{2n-1}$. By Example \ref{ex:1}, the product $M_{2n}= 
L_{2n-1}\times S^1=(\Gamma_{2n-1}\times\bZ)\backslash(H_{2n-1}\times\bR)$ has a lcs structure of the first kind $(\bomega,\bar{\eta})$, where $\bomega$ is the angular form on $S^1$; as before, we are 
implicitly identifying $\bar{\eta}\in\Omega^1(L_{2n-1})$ with $\pi_1^*\bar{\eta}\in\Omega^1(M_{2n})$ and $\bomega\in\Omega^1(S^1)$ with $\pi_2^*\bomega\in\Omega^1(M_{2n})$.

Set $\fg_{2n}=\fh_{2n-1}\oplus\bR$ and let $\omega$ be the dual of a generator of the $\bR$-factor. Notice that $d\omega=0$.
\begin{proposition}\label{complex_str}
 The Lie algebra $\fg_{2n}$ admits a complex structure.
\end{proposition}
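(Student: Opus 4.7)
The approach is to exhibit an explicit complex structure $J$ on $\fg_{2n}$ by prescribing its space of $(1,0)$-forms $\fg^{1,0}\subset (\fg_{2n})_{\bC}^{*}$. Recall that a left-invariant almost complex structure on a Lie algebra is integrable (vanishing of the algebraic Nijenhuis tensor) if and only if the associated splitting $(\fg_{2n})_{\bC}^{*}=\fg^{1,0}\oplus \fg^{0,1}$ satisfies $d(\fg^{1,0})\subset \fg^{1,0}\wedge (\fg_{2n})_{\bC}^{*}$, i.e., the differential of a $(1,0)$-form has no $(0,2)$ component. Since the structure equations of $\fg_{2n}$ are very sparse, I expect that a clever choice of pairing will make this verification short.

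The ansatz I propose pairs the generators of $(\fg_{2n})^{*}$ ``across'' the index range:
\[
\theta_1=\alpha_1+\sqrt{-1}\,\alpha_{n-1},\quad \theta_2=\beta_1+\sqrt{-1}\,\beta_{n-1},\quad \theta_j=\alpha_{j-1}+\sqrt{-1}\,\beta_{j-1}\ \text{for } 3\le j\le n-1,\quad \theta_n=\omega+\sqrt{-1}\,\eta.
\]
Since the real and imaginary parts of these $n$ forms together give back the real basis of $(\fg_{2n})^{*}$, the family $\{\theta_1,\ldots,\theta_n,\bar\theta_1,\ldots,\bar\theta_n\}$ is $\bC$-linearly independent in $(\fg_{2n})_{\bC}^{*}$, so declaring $\fg^{1,0}=\la \theta_1,\ldots,\theta_n\ran_{\bC}$ determines a well-defined almost complex structure $J$.

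It then remains to check that each $d\theta_i$ lies in $\Lambda^{2,0}\oplus \Lambda^{1,1}$. Since $\alpha_k$, $\beta_j$ are closed for $k=1,\ldots,n-1$, $j=1,\ldots,n-2$ and $d\omega=0$, one gets $d\theta_1=0$ and $d\theta_j=0$ for $3\le j\le n-1$; only $d\theta_2=\sqrt{-1}\,\alpha_1\wedge \alpha_{n-1}$ and $d\theta_n=\sqrt{-1}\sum_{k=1}^{n-1}\alpha_k\wedge \beta_k$ need to be analysed. A direct substitution gives $\alpha_1\wedge \alpha_{n-1}=\tfrac{\sqrt{-1}}{2}\,\theta_1\wedge \bar\theta_1$, which is of type $(1,1)$, and $\alpha_k\wedge \beta_k=\tfrac{\sqrt{-1}}{2}\,\theta_{k+1}\wedge \bar\theta_{k+1}$ for $2\le k\le n-2$. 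The crucial step is the contribution of the indices $k=1$ and $k=n-1$: expanding each product in terms of $\theta_1,\theta_2$ and their conjugates, one finds that the $(0,2)$-parts of $\alpha_1\wedge \beta_1$ and $\alpha_{n-1}\wedge \beta_{n-1}$ are opposite multiples of $\bar\theta_1\wedge \bar\theta_2$, so they cancel and the sum reduces to $\tfrac{1}{2}(\theta_1\wedge \bar\theta_2-\theta_2\wedge \bar\theta_1)$, which is of type $(1,1)$. Assembling everything, $d\theta_n$ is $(1,1)$, so $J$ is integrable.

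The main obstacle is finding a pairing for which this cancellation happens: the naive diagonal choice $\theta_j=\alpha_j+\sqrt{-1}\,\beta_j$ fails, because then $d\theta_{n-1}$ has an uncompensated $(0,2)$-component coming from $d\beta_{n-1}=\alpha_1\wedge \alpha_{n-1}$ and, symmetrically, the $(0,2)$-parts of $\alpha_1\wedge \beta_1$ and $\alpha_{n-1}\wedge \beta_{n-1}$ inside $d\theta_n$ do not cancel. Pairing $(\alpha_1,\alpha_{n-1})$ and $(\beta_1,\beta_{n-1})$ instead forces the non-closed summand $d\beta_{n-1}=\alpha_1\wedge \alpha_{n-1}$ to become a purely $(1,1)$ form, and simultaneously produces the sign flip that kills the $\bar\theta_1\wedge \bar\theta_2$ contribution in $d\theta_n$; this is the only non-routine ingredient of the proof.
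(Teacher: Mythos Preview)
Your proof is correct and takes essentially the same approach as the paper: the complex structure you define via the $(1,0)$-forms $\theta_1,\ldots,\theta_n$ is, up to replacing $J$ by $-J$, exactly the endomorphism the paper writes down on the dual basis $\{X_1,Y_1,\ldots,X_{n-1},Y_{n-1},Z,T\}$ (namely $J(X_1)=-X_{n-1}$, $J(Y_1)=-Y_{n-1}$, $J(X_i)=Y_i$ for $2\le i\le n-2$, $J(Z)=-T$). The paper simply asserts that the Nijenhuis tensor vanishes, whereas you carry out the equivalent verification that $d\theta_i$ has no $(0,2)$-component; the key cancellation you identify between the $\bar\theta_1\wedge\bar\theta_2$ terms of $\alpha_1\wedge\beta_1$ and $\alpha_{n-1}\wedge\beta_{n-1}$ is precisely what makes the paper's ``straightforward computation'' go through.
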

\begin{proof}
Let $\{X_1,Y_1,\ldots,X_{n-1},Y_{n-1},Z,T\}$ be the basis of $\fg_{2n}$ dual to $\{\alpha_1,\beta_1,\ldots,\alpha_{n-1},\beta_{n-1},\eta,\omega\}$.
In this basis, the non-zero brackets are
\[
 [X_i,Y_i]=-Z, \ i=1,\ldots,n-1 \quad \mathrm{and} \quad [X_1,X_{n-1}]=-Y_{n-1}.
\]
We define an endomorphism $J\colon\fg_{2n}\to\fg_{2n}$ by setting
\[
 J(X_1)=-X_{n-1}, \ J(Y_1)=-Y_{n-1}, \ J(X_i)=Y_i, i=2,\ldots,n-2, \ \mathrm{and} \ J(Z)=-T,
\]
and imposing $J^2=-\mathrm{Id}$. A straightforward computation shows that the Nijenhuis tensor of $J$ vanishes, hence $J$ is a complex structure on $\fg_{2n}$.
\end{proof}

\begin{proposition}\label{sympl_str}
 For $n\geq 4$, the Lie algebra $\fg_{2n}$ admits no symplectic structure.
\end{proposition}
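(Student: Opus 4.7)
The plan is to argue by contradiction. Suppose $\sigma \in \Lambda^2\fg_{2n}^*$ is a symplectic form and derive a contradiction by showing $\iota_Z \sigma = 0$, which is impossible since $Z \neq 0$ and $\sigma$ is non-degenerate. The key structural input is that $Z$ is central in $\fg_{2n}$: inspecting the brackets written down in the proof of Proposition~\ref{complex_str}, $Z$ appears only as the output of $[X_i, Y_i] = -Z$, never as an input. Using the Chevalley--Eilenberg formula
\[
d\sigma(A,B,C) = -\sigma([A,B],C) + \sigma([A,C],B) - \sigma([B,C],A),
\]
I record the following useful fact: whenever $V \in \fg_{2n}$ and $i \in \{1,\dots,n-1\}$ satisfy $[X_i,V] = [Y_i,V] = 0$, evaluating $d\sigma = 0$ on the triple $(X_i, Y_i, V)$ immediately yields $\sigma(Z,V) = 0$.

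My plan is then to run through the basis $\{X_1,Y_1,\ldots,X_{n-1},Y_{n-1},Z,T\}$ and produce such an index $i$ for each basis vector $V \neq Z$. For $V = T$, centrality of $T$ means any $i$ works. For $V = Y_j$, any $i \neq j$ works, since the only non-trivial bracket with some $Y_k$ on the left is $[X_k, Y_k] = -Z$. For $V = X_j$, I need $i \neq j$ with $\{i,j\} \neq \{1, n-1\}$, because $[X_1, X_{n-1}] = -Y_{n-1}$ is the only non-trivial bracket among the $X$'s. If $j \in \{2,\ldots,n-2\}$ one takes $i = 1$; if $j \in \{1,n-1\}$ one picks any $i$ in the complementary set $\{2,\ldots,n-2\}$. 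Combined with $\sigma(Z,Z) = 0$, this produces $\iota_Z \sigma = 0$, contradicting the non-degeneracy of $\sigma$.

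The main, and in fact only, obstacle is the subcase $V \in \{X_1, X_{n-1}\}$: the ``obvious'' choice of index $i$ (namely $i = n-1$ when $V = X_1$, or $i = 1$ when $V = X_{n-1}$) fails because $[X_1, X_{n-1}] = -Y_{n-1} \neq 0$. A direct computation of $d\sigma$ on, say, $(X_1, Y_1, X_{n-1})$ merely yields $\sigma(Z, X_{n-1}) = \sigma(Y_{n-1}, Y_1)$, which places no constraint on $\sigma(Z, X_{n-1})$. Hence one genuinely needs an index $i \in \{2,\ldots,n-2\}$, and this set is non-empty precisely when $n \geq 4$. This explains both why the hypothesis $n \geq 4$ is essential and why the proposition is sharp: for $n = 3$ the algebra $\fg_6$ does admit a symplectic structure, as exhibited in the proof of Theorem~\ref{dim:6_first}.
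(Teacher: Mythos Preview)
Your proof is correct and takes a genuinely different route from the paper's argument. The paper works on the dual side: it decomposes $\Lambda^2\fg_{2n}^*$ according to the splitting $\fg_{2n}^* = \fk^* \oplus \langle\eta\rangle \oplus \langle\omega\rangle$, tracks how the Chevalley--Eilenberg differential respects this decomposition, and shows by a sequence of coefficient-chasing arguments that for $n\geq 4$ every closed $2$-form has the $\eta$-component $\sigma_2\wedge\eta$ and the $\omega\wedge\eta$-component forced to zero, hence rank $<n$. Your argument instead exploits directly that $Z$ is central: evaluating $d\sigma=0$ on well-chosen triples $(X_i,Y_i,V)$ kills $\sigma(Z,V)$ for every basis vector $V$, so $\iota_Z\sigma=0$ and non-degeneracy fails. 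Your approach is shorter and more conceptual, pinpointing exactly the structural reason the hypothesis $n\geq 4$ enters (the need for an index in $\{2,\dots,n-2\}$ to handle $V\in\{X_1,X_{n-1}\}$); the paper's approach, while more computational, has the side benefit of effectively describing the space of closed $2$-forms and exhibiting the symplectic form on $\fg_6$ along the way.
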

\begin{proof}
Consider the basis $\{\alpha_1,\beta_1,\ldots,\alpha_{n-1},\beta_{n-1},\eta,\omega\}$ of $\fg_{2n}^*$ and the vector space splitting 
\[
\fg_{2n}^*=\fk^*\oplus \langle\eta\rangle\oplus \langle\omega\rangle, 
\]
where $\fk^*=\langle \alpha_1,\beta_1,\ldots,\alpha_{n-1},\beta_{n-1}\rangle$. Then 
\begin{equation}\label{splitting:Lie}
\Lambda^2\fg_{2n}^*=\Lambda^2\fk^*\oplus\fk^*\wedge\langle\eta\rangle \oplus\fk^*\wedge\langle\omega\rangle\oplus\langle\omega\rangle\wedge\langle\eta\rangle 
\end{equation}
with 
\begin{itemize}
 \item $d(\Lambda^2\fk^*)\subset\langle\alpha_1\wedge\alpha_{n-1}\rangle\wedge\fk^*$
 \item $d(\fk^*\wedge\langle\eta\rangle)\subset\langle\alpha_1\wedge\alpha_{n-1}\rangle\wedge\langle\eta\rangle 
\oplus\fk^*\wedge\langle d\eta\rangle$
 \item $d(\fk^*\wedge\langle\omega\rangle)\subset\langle\alpha_1\wedge\alpha_{n-1}\rangle\wedge\langle\omega\rangle$
 \item $d(\langle\omega\rangle\wedge\langle\eta\rangle)\subset\langle d\eta\rangle\wedge\langle 
\omega \rangle$.
\end{itemize}
Given $\sigma \in \Lambda^2\fg_{2n}^*$, decompose it according to \eqref{splitting:Lie} to get 
\[
\sigma=\sigma_1+\sigma_2\wedge\eta+\sigma_3\wedge\omega+c\omega\wedge\eta.
\]
Notice that $d\sigma_1\in \langle\alpha_1\wedge\alpha_{n-1}\rangle\wedge\fk^*$, $d(\sigma_2 \wedge \eta) \in \langle\alpha_1\wedge\alpha_{n-1}\rangle\wedge\langle\eta\rangle 
\oplus\fk^*\wedge\langle d\eta\rangle $ and
$d(\sigma_3\wedge\omega)\in\langle\alpha_1\wedge\alpha_{n-1}\rangle\wedge\langle\omega\rangle$. We have 
$d(c\omega\wedge\eta)=-c\omega\wedge d\eta$; since $d(c\omega\wedge\eta)$ is the only component of $d\sigma$ which can possibly be a multiple of 
$d\eta\wedge\omega$, $d\sigma=0$ implies $c=0$. By the same token, $d(\sigma_2\wedge\eta)$ is the only component of $d\sigma$ that can possibly lie in $\fk^*\wedge\langle 
d\eta\rangle$. Hence $d\sigma=0$ implies $d(\sigma_2\wedge\eta)=0$. Write $\sigma_2=\sum_{i=1}^{n-1}(a_i\alpha_i+b_i\beta_i)$. The only possible non-zero component of 
$d\sigma$ in $\langle\alpha_1\wedge\alpha_{n-1}\rangle\wedge\langle\eta\rangle$ comes from a term $d(\beta_{n-1}\wedge\eta)$. Hence, $d\sigma=0$ implies $b_{n-1}=0$.
We compute
\begin{multline*}
d\left(\sum_{i=1}^{n-1}a_i\alpha_i\wedge\eta+\sum_{i=1}^{n-2}b_i\beta_i\wedge\eta\right)=\\
 -\sum_{i=1}^{n-1}\sum_{j\neq i}a_i\alpha_i\wedge\alpha_j\wedge\beta_j-\sum_{i=1}^{n-2}\sum_{j\neq i}b_i\beta_i\wedge\alpha_j\wedge\beta_j
 -\sum_{i=1}^{n-2}b_i\beta_i\wedge\alpha_{n-1}\wedge\beta_{n-1};
\end{multline*} 
we see that $d\sigma=0$ implies $b_i=0$ for $i=1,\ldots,n-2$. We are left with $\sigma_2=\sum_{i=1}^{n-1}a_i\alpha_i$. If $n=3$,
\[
d(\sigma_2\wedge\eta)=-a_1\alpha_1\wedge\alpha_2\wedge\beta_2-a_2\alpha_2\wedge\alpha_1\wedge\beta_1\in \langle\alpha_1\wedge\alpha_2\rangle\wedge\fk^*
\]
and choosing $a_1=0$ and $a_2=1$, for instance, we get
\[
 d(\sigma_2\wedge\eta)=\alpha_1\wedge\alpha_2\wedge\beta_1=-d(\beta_1\wedge\beta_2);
\]
this gives the symplectic structure $\sigma=\alpha_1\wedge\omega+\alpha_2\wedge\eta+\beta_1\wedge\beta_2$, which was implicitly used in the proof of 
\thmref{dim:6_first}. Hence $\fg_6$ is symplectic. However, if $n\geq 4$, $\langle\alpha_1\wedge\alpha_{n-1}\rangle\wedge\fk^*\cap \fk^*\wedge\langle d\eta\rangle=\emptyset$ 
and if $d(\sigma_2\wedge\eta)$ has non-zero component in $\fk^*\wedge\langle d\eta\rangle$, then $\sigma$ is not closed. This forces the $a_i$ to vanish. This implies that every 2-cocycle has rank 
$<n$, hence $\fg_{2n}$ has no symplectic structure.
\end{proof}

\begin{theorem}\label{third:example}
 For every $n\geq 4$, $M_{2n}$ is a $2n$-dimensional nilmanifold which is complex, locally conformal symplectic but has no symplectic structure, carries no locally conformal K\"ahler metric (with left-invariant complex 
 structure) and no Vaisman metric.
\end{theorem}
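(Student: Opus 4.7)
The theorem is essentially an assembly of the pieces developed just above, applied to the higher-dimensional family $M_{2n}$; the strategy will mirror the proofs of Theorems \ref{dim:6_first} and \ref{dim:6_second}, substituting the new Propositions \ref{complex_str} and \ref{sympl_str} where appropriate. The plan is to verify each listed property one by one.

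First I would point out that $M_{2n} = (\Gamma_{2n-1}\times\bZ)\backslash(H_{2n-1}\times\bR)$ is a compact nilmanifold by construction, and that the pair $(\bomega,\bar\eta)$ coming from the contact form $\eta$ on $H_{2n-1}$ and the angular form on $S^1$ is the advertised lcs structure of the first kind, exactly as in Example \ref{ex:1}. The complex structure $J$ of Proposition \ref{complex_str} is left-invariant on $H_{2n-1}\times\bR$ and therefore descends to $M_{2n}$, giving the complex structure on the nilmanifold. For the absence of a symplectic structure I would invoke \cite[Lemma 2]{Has}: any symplectic structure on a compact nilmanifold can be chosen left-invariant, so the existence of a symplectic form on $M_{2n}$ would force $\fg_{2n}$ to be symplectic; this contradicts Proposition \ref{sympl_str} since $n\geq 4$.

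The remaining two assertions both rest on a Betti-number computation. By Nomizu's theorem (\thmref{Nomizu}), $b_1(M_{2n})=b_1(\fg_{2n})$, and from the list of differentials of $\alpha_1,\ldots,\alpha_{n-1},\beta_1,\ldots,\beta_{n-1},\eta,\omega$ we read off that the closed elements of $\fg_{2n}^*$ are spanned by $\alpha_1,\ldots,\alpha_{n-1},\beta_1,\ldots,\beta_{n-2},\omega$, so
\[
b_1(M_{2n})=b_1(\fg_{2n})=2n-2.
\]
In particular $b_1(M_{2n})$ is even for every $n\geq 3$. To rule out Vaisman metrics I simply appeal to \cite[Corollary 3.6]{VaismanE}, which forces $b_1$ of a compact Vaisman manifold to be odd.

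For the absence of an lcK metric with left-invariant complex structure, Sawai's result (\thmref{Sawai}) asserts that any such $M_{2n}$ would have to be biholomorphic, hence in particular diffeomorphic, to a compact quotient of $\mathrm{Heis}_{2n-1}\times\bR$. But by \lemref{Betti_Heisenberg} any such quotient has $b_1=2n-1$, which is different from $2n-2$ for every $n$. Hence $M_{2n}$ cannot carry a left-invariant-complex lcK structure. Since every step here is already prepared, the only delicate point worth double-checking is that none of the closed forms in $\Lambda^1\fg_{2n}^*$ have been missed in the $b_1$ computation; this is the main (and only mild) obstacle, and it is straightforward from the explicit list of differentials given before the theorem statement.
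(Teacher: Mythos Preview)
Your proof is correct and follows essentially the same approach as the paper's own argument: both assemble the construction, Proposition~\ref{complex_str}, Proposition~\ref{sympl_str} together with \cite[Lemma~2]{Has}, the Betti-number computation $b_1(M_{2n})=2n-2$ via Nomizu, and then invoke Sawai's theorem with Lemma~\ref{Betti_Heisenberg} and \cite[Corollary~3.6]{VaismanE} to exclude lcK (left-invariant complex) and Vaisman metrics. Your explicit identification of the closed elements of $\fg_{2n}^*$ is a welcome detail that the paper leaves as ``one computes readily''.
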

\begin{proof}
By construction, $M_{2n}=(\Gamma_{2n-1}\times\bZ)\backslash (H_{2n-1}\times\bR)$, hence it is a nilmanifold. We have already described the lcs structure on $M_{2n}$. By Proposition 
\ref{complex_str}, $H_{2n-1}\times\bR$ admits a left-invariant complex structure, which therefore endows $M_{2n}$ with a complex structure. Also, since $n\geq 4$, $\fg_{2n}$ admits no symplectic 
structure by Proposition \ref{sympl_str}, hence $H_{2n-1}\times\bR$ has no left-invariant symplectic structure. Therefore, using Lemma 2 in \cite{Has}, we deduce that $M_{2n}$ carries 
no symplectic structures.

One computes readily that $b_1(\fg_{2n})=2n-2$; hence, by Nomizu \thmref{Nomizu}, $b_1(M_{2n})=2n-2$ and $M_{2n}$ is not a quotient of the Heisenberg group multiplied by $\bR$, according to 
Lemma \ref{Betti_Heisenberg}. Again by \thmref{Sawai}, $M_{2n}$ does not carry any lcK structure (with left invariant complex structure). Finally, since $b_1(M)$ 
is even $M_{2n}$ carries no Vaisman structure by \cite[Corollary 3.6]{VaismanE}.
\end{proof}

\begin{remark}
By Sawai's \thmref{Sawai}, if $M_{2n}$ carries a lcK metric, then the corresponding complex structure can not be left-invariant. This raises the question of studying nilmanifolds endowed with 
complex structures that are not left-invariant.
\end{remark}

%
%

\section{Local and global structure of a lcs manifold of the first kind with a compact leaf in its canonical foliation}\label{sec:foliation}

In this section we present a description of the local and global structure of a compact lcs manifold of the first kind with a compact leaf in its canonical foliation. For this purpose, we use some results on a special class 
of foliations of codimension $1$. We will include basic proofs of them in the next subsection.

\subsection{Some general results on a special class of foliations of codimension 1}\label{fol-codim-1}

Let $\omega$ be a closed $1$-form on a smooth manifold $M$, with $\omega(p) \neq 0$, for every $p \in M$. The codimension 1 foliation ${\mathcal F}$, whose characteristic space at $p 
\in M$ is $\cF(p) = \{ v \in T_pM \ | \ \omega(p)(v) = 0 \}$, is said to be \emph{transversely parallelizable complete} if there exists a complete vector field $U$ on $M$ such that $\omega(U) = 1$. 

Then, we may prove the following results:
\begin{theorem} (the local description)\label{local-descrip-general}
Let $\omega$ be a closed $1$-form on a smooth manifold $M$ such that $\omega(p) \neq 0$, for every $p \in M$. Suppose that $L$ is a compact leaf of the foliation ${\mathcal F}=\{\omega = 0\}$ and 
that ${\mathcal F}$ is transversely parallelizable complete, $U$ being a complete vector field on $M$ such that $\omega(U) = 1$. If $\Psi_L\colon L\times\mathbb{R} \to M$ is the restriction to 
$L\times\mathbb{R}$ of the flow $\Psi\colon M \times \mathbb{R} \to M$ of $U$ then there exists $\epsilon > 0$ and an open subset $W$ of $M$, $L \subseteq W$, such that $\omega|_{W}$ is an exact 
$1$-form and
\[
\Psi^{\epsilon}_L \coloneq (\Psi_{L})\big|_{L\times (-\epsilon, \epsilon)}\colon L\times (-\epsilon, \epsilon) \to W
\]
is a diffeomorphism. Moreover,
\begin{equation}\label{exact-omega}
(\Psi^{\epsilon}_L)^*(\omega|_{W}) = pr_2^*(dt), \; \; U|_{W} \circ \Psi_L^{\epsilon} = T\Psi_L^{\epsilon} \circ \frac{\partial}{\partial t}\Big|_{L\times (-\epsilon, \epsilon)}
\end{equation}
$t$ being the standard coordinate on $\mathbb{R}$. In addition, the leaves of ${\mathcal F}$ over points of $W$ are of the form $\Psi_t(L)$, with $t \in (-\epsilon, \epsilon)$. In particular, they 
are contained in $W$ and they are diffeomorphic to $L$. 
\end{theorem}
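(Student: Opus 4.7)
The plan is to exploit the invariance of $\omega$ under the flow of $U$, together with the compactness of $L$, to produce a flow-box neighbourhood of $L$.

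First, since $\omega$ is closed and $\omega(U)=1$, Cartan's formula gives $\Lie_U\omega = d(\iota_U\omega)+\iota_U d\omega = 0$, so $\Psi_t^*\omega=\omega$ for every $t\in\bR$. Consequently $\Psi_t$ preserves the foliation $\cF$ and carries the leaf $L$ diffeomorphically onto the leaf $\Psi_t(L)$ for every $t$. Next, at any $(x,t)\in L\times\bR$, the differential of $\Psi_L$ sends $\partial/\partial t$ to $U(\Psi_t(x))$ and maps $T_xL=\cF(x)$ onto $\cF(\Psi_t(x))$ via $d\Psi_t$; since $\omega(U(\Psi_t(x)))=1$, this vector is not tangent to the leaf, so the differential is a linear isomorphism. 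Hence $\Psi_L$ is a local diffeomorphism on all of $L\times\bR$.

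The main obstacle is to promote this local injectivity to genuine injectivity of $\Psi_L$ on $L\times(-\epsilon,\epsilon)$ for some uniform $\epsilon>0$. I would argue by contradiction using the compactness of $L$: if no such $\epsilon$ worked, then for each $n\in\bN$ there would exist distinct points $(x_n,s_n),(y_n,t_n)\in L\times(-1/n,1/n)$ with $\Psi_L(x_n,s_n)=\Psi_L(y_n,t_n)$. Extracting subsequences yields $x_n\to x_*\in L$ and $y_n\to y_*\in L$; since $s_n,t_n\to 0$, the common value $\Psi_L(x_n,s_n)$ converges simultaneously to $x_*$ and $y_*$, so $x_*=y_*$, contradicting the local injectivity of $\Psi_L$ at $(x_*,0)$ established above. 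With such an $\epsilon$ chosen, $W:=\Psi^\epsilon_L(L\times(-\epsilon,\epsilon))$ is open and $\Psi^\epsilon_L$ is a diffeomorphism onto $W$.

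It remains to verify \eqref{exact-omega} and the leaf description, which are routine consequences. For a tangent vector $(X,a\,\partial/\partial t)$ at $(x,t)$, using $\Psi_t^*\omega=\omega$, $\omega|_{TL}=0$ and $\omega(U)=1$, one computes
\[
((\Psi^\epsilon_L)^*\omega)(X,a\,\partial/\partial t) = \omega_{\Psi_t(x)}\bigl(d\Psi_t(X)+aU(\Psi_t(x))\bigr) = a = pr_2^*(dt)(X,a\,\partial/\partial t);
\]
in particular $\omega|_W = d\bigl(pr_2\circ(\Psi^\epsilon_L)^{-1}\bigr)$ is exact. The identity $U|_W\circ\Psi^\epsilon_L = T\Psi^\epsilon_L\circ\partial/\partial t$ is just the defining property of the flow of $U$. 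Finally, every $p\in W$ has a unique expression $\Psi_t(x)$ with $x\in L$ and $t\in(-\epsilon,\epsilon)$, and since $\Psi_t$ preserves $\cF$ the leaf of $\cF$ through $p$ is precisely $\Psi_t(L)=\Psi^\epsilon_L(L\times\{t\})\subset W$, diffeomorphic to $L$ via $\Psi_t|_L$.
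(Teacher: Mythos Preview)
Your proof is correct and follows the same overall outline as the paper's: establish $\Psi_t^*\omega=\omega$, show that $\Psi_L$ is a local diffeomorphism, use compactness of $L$ to obtain a uniform $\epsilon$, and then read off \eqref{exact-omega} and the leaf structure. The one point of difference is how compactness is invoked. The paper applies the inverse function theorem at each $(p,0)\in L\times\{0\}$, extracts a finite subcover $\{W^L_{p_i}\}$ of $L$, takes $\epsilon=\min_i\epsilon_{p_i}$, and then asserts that $\Psi^\epsilon_L$ is a diffeomorphism onto its image. Your sequential argument instead establishes injectivity of $\Psi_L$ on $L\times(-\epsilon,\epsilon)$ directly by contradiction. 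Your route has the virtue of making global injectivity explicit---a step the paper's finite-cover formulation leaves to the reader---while the paper's version is the classical tube-lemma template. The verifications of \eqref{exact-omega} and of the leaf description are essentially identical in the two arguments.
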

\begin{theorem} (the global description)\label{global-descrip-general}
Under the same hypotheses as in Theorem \ref{local-descrip-general} and if, in addition, $M$ is connected then all the leaves of the foliation ${\mathcal F}$ are of the form $\Psi_t(L)$ (with $t \in 
\mathbb{R}$) and, therefore, diffeomorphic to $L$. Furthermore, we have two possibilities:
\begin{enumerate}
\item
If $M$ is not compact then the map
$
\Psi_L\colon L\times\mathbb{R} \to M
$
is a diffeomorphism and
\begin{equation}\label{omega-exact-global}
\Psi_{L}^*(\omega) = pr_2^*(dt), \; \; U \circ \Psi_L = T\Psi_L \circ \frac{\partial}{\partial t}, 
\end{equation}
where $T\Psi_L\colon T(L\times\mathbb{R}) \to TM$ is the tangent map of $\Psi_L$.
\item
If $M$ is compact then there exists $c > 0$ such that $\Psi_c\colon L \to L$ is a diffeomorphism and the map $\Psi_L\colon L\times\mathbb{R} \to M$ induces a diffeomorphism between $M$ and the 
mapping torus of $L$ by $\Psi_c$ and $c$.
\item
Under the identification between $M$ and $L\times_{(\Psi_c,c)} \mathbb{R} $, $U$ is the vector field on $L\times_{(\Psi_c,c)}\mathbb{R} $ which is induced by the vector field 
$\frac{\partial}{\partial t}$ on $L\times\mathbb{R} $ and $\omega$ is the $1$-form on $M$ which is induced by the canonical exact $1$-form $dt$ on $L\times\mathbb{R} $. So, if 
$\pi\colon M = L\times_{(\Psi_c,c)}\mathbb{R} \to S^1 = \mathbb{R}/c\mathbb{Z}$ is the canonical projection and $\tau$ is the length element of $S^1$, we have that $\pi^*(\tau) = \omega$.
\end{enumerate}
\end{theorem}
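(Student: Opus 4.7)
The plan is to deduce the theorem from Theorem~\ref{local-descrip-general} together with a careful analysis of the group of ``return times'' $A := \{t\in\bR : \Psi_t(L) = L\}\subset\bR$, using the connectedness of $M$ to propagate the local picture.

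First I would note that since $\omega(U)=1$ is constant and $d\omega=0$, we have $\Lie_U\omega=0$, hence $\Psi_t^*\omega=\omega$ for every $t\in\bR$. Consequently $\Psi_t$ preserves $\cF$, and $L_t := \Psi_t(L)$ is a compact connected submanifold of codimension $1$ tangent to $\cF$; being open and closed in the leaf through any of its points, it is itself a compact leaf diffeomorphic to $L$. Applying Theorem~\ref{local-descrip-general} to each $L_t$ yields an open neighborhood $W_t\supseteq L_t$ saturated by leaves of the form $L_s$ with $|s-t|<\epsilon_t$, and every such $W_t$ lies in $\Psi_L(L\times\bR)$, so the image is open in $M$. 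To show the complement is open I would pick $p\notin\Psi_L(L\times\bR)$, let $L'$ be the (possibly non-compact) leaf through $p$, and build a foliated flowbox around $p$ using $U$ as a transversal, of the form $V\cong U_p\times(-\delta,\delta)$ with $U_p$ open in $L'$. If some $\Psi_s(q)\in V$ equalled $\Psi_t(x)$ for $x\in L$, then $q = \Psi_{t-s}(x)\in L'\cap L_{t-s}$, forcing $L' = L_{t-s}$ and contradicting the choice of $p$. Connectedness of $M$ then yields surjectivity of $\Psi_L$ and shows that every leaf of $\cF$ has the form $L_t$.

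Next I would study $A$. It is a subgroup of $\bR$, closed because $A = \bigcap_{x\in L}\{t : \Psi_t(x)\in L\}\cap\bigcap_{x\in L}\{t : \Psi_{-t}(x)\in L\}$ is an intersection of preimages of the closed set $L$. Because $U$ is transverse to $L$, for $0<|t|$ small $\Psi_t$ displaces every $x\in L$ off $L$, so by compactness of $L$ the value $0$ is isolated in $A$; therefore $A = \{0\}$ or $A = c\bZ$ for some $c>0$. In the first case, any two $L_t, L_s$ with $t\neq s$ are disjoint (otherwise they would coincide, forcing $t-s\in A$), so combined with the injectivity of each $\Psi_t|_L\colon L\to L_t$, the map $\Psi_L$ is a global bijection, and Theorem~\ref{local-descrip-general} upgrades it to a diffeomorphism $L\times\bR\to M$; in particular $M$ is noncompact, settling part (1). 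In the second case, the same reasoning shows $\Psi_L(x,t)=\Psi_L(y,s)$ iff $t-s=kc$ and $y=\Psi_c^k(x)$, i.e.\ iff $(x,t)\sim_{(\Psi_c,c)}(y,s)$, so $\Psi_L$ descends to a bijective local diffeomorphism $\bar\Psi_L\colon L\times_{(\Psi_c,c)}\bR\to M$, hence a diffeomorphism onto a compact $M$, settling part (2).

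The pullback formulas in \eqref{omega-exact-global} and in item (3) are then a direct computation: for $(v,a\partial_t)\in T_{(x,t)}(L\times\bR)$ with $v\in T_xL$, one has $(\Psi_L^*\omega)(v,a\partial_t) = \omega(T\Psi_t v) + a\,\omega(U) = a = pr_2^*(dt)(v,a\partial_t)$, and $T\Psi_L\circ\partial_t = U\circ\Psi_L$ follows from $\Psi_L(x,t+s) = \Psi_s(\Psi_L(x,t))$; both identities descend to $L\times_{(\Psi_c,c)}\bR$ in part (2), and $\pi^*\tau = \omega$ is then immediate since $\tau$ pulls back to $dt$ under the quotient $\bR\to\bR/c\bZ$. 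The main obstacle will be the closedness step in the second paragraph: constructing the foliated chart around $p$ without knowing that $L'$ is compact forbids a direct appeal to Theorem~\ref{local-descrip-general}, so one must instead exploit the completeness of the transverse field $U$ to build the flowbox by hand via $(q,s)\mapsto\Psi_s(q)$ on a small codimension-one disk in $L'$ through $p$.
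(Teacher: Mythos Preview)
Your proof is correct and follows essentially the same strategy as the paper: show that $\Psi_L$ is a surjective local diffeomorphism via an open--and--closed argument on connected $M$, then analyze the closed subgroup $A\subset\bR$ of return times to separate the product case from the mapping-torus case. Two minor differences are worth noting: you omit the paper's intermediate ``Second step'' (that the leaf space $M/\cF$ is a smooth manifold), which is indeed unnecessary for the stated conclusion; and on the closedness of $\Psi_L(L\times\bR)$ you are actually more careful than the paper, which invokes the tubular neighborhood of Theorem~\ref{local-descrip-general} around the leaf $L'$ through $p'$ without having established that $L'$ is compact, whereas your local flowbox built from $U$ sidesteps this issue.
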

\begin{proof}(of Theorem \ref{local-descrip-general})
Let $p\in L$ be a point. The tangent space to $M$ at $p$ decomposes as $T_pM=T_pL\oplus\langle U(p)\rangle$. Since $\Psi$ is the flow of $U$, one has
\[
(T_{(p,0)}\Psi_L)\left(\frac{\partial}{\partial t}_{(p,0)}\right)=U(p),\quad 
(T_{(p,0)} \Psi_L)(X(p))=X(p)
\]
for every vector $X(p)\in T_pL$. Hence $T_{(p,0)} \Psi_L\colon T_pL\times\bR\to T_pM$ is an isomorphism, thus there exist $\epsilon_p>0$, an open set $W_p^L\subset L$ with 
$p\in W_p^L$ and an open set $W_p\subset M$ with $p\in W_p$ such that
\[
\Psi\big|_{W_p^L\times (-\epsilon_p,\epsilon_p)}\colon W_p^L\times(-\epsilon_p,\epsilon_p)\to W_p
\]
is a diffeomorphism. Clearly $L=\bigcup_{p\in L}W_p^L$, and since $L$ is compact there exist $p_1,\ldots p_k\in L$ such that $L=\bigcup_{i=1}^kW_{p_i}^L$. Set 
$\epsilon=\min\{\epsilon_{p_1},\ldots,\epsilon_{p_k}\}$ and $W=\Psi (L\times(-\epsilon,\epsilon))$. Then $W\subset M$ is an open set, $L\subset W$ and 
\[
\Psi^{\epsilon}_L\coloneq\Psi\big|_{L\times(-\epsilon,\epsilon)}\colon L\times(-\epsilon,\epsilon)\to W
\]
is a diffeomorphism. On the other hand, we have that 
\[
\cL_U\omega= d(\omega(U)) + i_U(d\omega) = 0.
\]
 Hence $\Psi_t^*\omega=\omega$ and, as the pullback of $\omega|_{W}$ to $L$ under the inclusion $L\hookrightarrow W$ is zero, we get 
$(\Psi^{\epsilon}_L)^*(\omega|_{W})=pr_2^*(dt)$. Furthermore, using that $\Psi$ is the flow of $U$, we directly deduce the second relation in \eqref{exact-omega}.

Finally, we show that if $t\in(-\epsilon,\epsilon)$ and $p\in L$, then $\Psi_t(L)$ is the leaf $L'$ of $\cF$ over $\Psi_t(p)$. It is clear that $\Psi_t(L)$ is a submanifold of $M$ which 
is diffeomorphic to $L$. Moreover, since $\Psi_t^*\omega=\omega$, $\Psi_t(L)$ is a connected integral submanifold of $M$, hence $\Psi_t(L)\subset L'$. Thus $L\subset \Psi_{-t}(L')$, and 
$\Psi_{-t}(L')$ is an integral submanifold of $\cF$. But this implies $L=\Psi_{-t}(L')$, and $L'=\Psi_t(L)$.
\end{proof}

\begin{proof} (of Theorem \ref{global-descrip-general})
We will proceed in three steps.

\medskip

\noindent \textbf{First step} We will see that $\Psi_L\colon L\times\bR\to M$ is a surjective local diffeomorphism which maps 
the submanifolds $L\times \{t\}$, with $t\in\bR$, to the leaves of the canonical foliation $\cF$ of $M$. In particular, each leaf of $\cF$ is of the form 
$\Psi_t(L)$, $t\in\bR$, and it is diffeomorphic to $L$. 

In fact, proceeding as in the proof of \thmref{local-descrip-general}, we deduce that \eqref{omega-exact-global} holds
and that if $(p,t)\in L\times\bR$ then $\Psi_t(L)$ is the leaf of $\cF$ over the point $\Psi_t(p)$. This implies that $\Psi_L$ is a local diffeomorphism. Indeed, if $X\in T_pL$ and $\lambda\in\bR$ satisfy
\[
0=(T_{(p,t)}\Psi_L)\left(X+\lambda\frac{\partial}{\partial t}\Big|_t\right)=(T_p\Psi_t)(X)+\lambda U(p)
\]
we have that
\[
0=\omega_p((T_p\Psi_t)(X)+\lambda U(p))=\lambda,
\]
(note that $(T_p\Psi_t)(X)\in \cF(\Psi_t(p))$). Hence $(T_p\Psi_t)(X)=0$, which gives $X=0$. 

We claim that $\Psi_L(L\times\bR)=M$. Since $\Psi_L$ is a local diffeomorphism, it is clear that 
$\Psi_L(L\times\bR)$ is an open subset of $M$. We will show that $\Psi_L(L\times\bR)$ is a closed subset of $M$; the claim will follow from this and from the assumption that $M$ is 
connected. Take $p'\in M - \Psi_L(L\times\bR)$ and denote by $L'$ the leaf of $\cF$ over $p'$. Then there exist $\epsilon'>0$ and $W'$ an open subset of $M$ such that $L' \subseteq W'$ and
\[
\Psi\big|_{L'\times(-\epsilon',\epsilon')}\colon L'\times(-\epsilon',\epsilon')\to W'
\]
is a diffeomorphism. We will show that $W'\subset M - \Psi_L(L\times\bR)$. Indeed, suppose that $q'\in W'$ and 
$q'=\Psi_t(p)$ for some $p\in L$. Then there exist $r'\in L'$ and $t'\in (-\epsilon',\epsilon')$ such that $q'=\Psi_t(p)=\Psi_{t'}(r')$. Thus $r'=\Psi_{t-t'}(p)\in L'\cap \Psi_{t-t'}(L)$.
Since $L$ and $L'$ are leaves of $\cF$, we conclude that $L'=\Psi_{t-t'}(L)$, a contradiction. We have proved so far that $\Psi_L(L\times\bR)=M$. This 
concludes the first step.\\

\noindent \textbf{Second step} We will see that the space of leaves $M/\cF=\widetilde{M}$ of the canonical foliation $\cF$ is a smooth manifold such that the canonical projection 
$\pi\colon M\to\widetilde{M}$ is a submersion. 

Let $p'\in M$ be a point and let $L'$ be the leaf of $\cF$ through $p'$. By Theorem \ref{local-descrip-general}, there exist an open set $W'\subset M$, with $L'\subset W'$, and $\epsilon'>0$ such that
\[
\Psi\big|_{L'\times(-\epsilon',\epsilon')}\colon L'\times(-\epsilon',\epsilon')\to W'
\]
is a diffeomorphism. Take coordinates on some open subset $W'_{L'}\subset L'$ with $p'\in W'_L$. Then $\hat{W}'=\Psi_{L'}( W'_{L'}\times(-\epsilon',\epsilon'))$ is an open subset of $M$
which admits a system of coordinates adapted to the foliation $\cF$, and $p'\in \hat{W}'$. Moreover, if $t',s'\in(-\epsilon',\epsilon')$, $t' \neq s'$, then the plaques $\Psi_{t'}(W'_{L'})$ and 
$\Psi_{s'}(W'_{L'})$ in $\hat{W}'$ are contained in different leaves of $\cF$. This is precisely the condition needed to ensure that the quotient space $\widetilde{M}=M/\cF$ is a smooth
manifold and that the canonical projection $\pi\colon M\to \widetilde{M}$ is a submersion. \\

\noindent \textbf{Third step} Let $L$ be our compact leaf and let $p\in L$ be a point. Set
\[
A_p=\{t\in\bR - \{0\} \ | \ \Psi_t(p)\in L\}\subset\bR - \{0\}.
\]
We will see that:
\begin{itemize}
\item
$A_p = \emptyset$ gives the first possibility of the theorem, and
\item
$A_p\neq\emptyset$ gives the second possibility.
\end{itemize}
 
In fact, suppose that $A_p=\emptyset$. Then, since all the leaves of $\cF$ are of the form $\Psi_t(L)$ for some $t\in \bR$, the map $\Psi_L\colon \bR\times L\to M$ is injective, hence a 
diffeomorphism 
by the first step. 

Now, assume that $A_p\neq\emptyset$. We shall see that there exists $c\in\bR$, $c>0$, such that $A_p=c\bZ$. In fact, set
\[
c=\inf A_p^+, \quad A_p^+=\{t\in\bR^+ \ | \ \Phi_t(p)\in L\}.
\]
Note that $A_p^+\neq\emptyset$. In addition, from Theorem \ref{local-descrip-general} follows that $c>0$. Furthermore, being $L$ a closed submanifold, we see that $c\in A_p^+$. Therefore 
$p\in L\cap\Psi_{-c}(L)$ which implies $L=\Psi_{-c}(L)$ and $L=\Psi_c(L)$. Hence, if $k\in\bZ$, $ck\in A_p$ and $c\bZ\subset A_p$. Conversely, if $t\in A_p$, there exists $k\in\bZ$ such 
that
$ck\leq t <c(k+1)$. If $t>ck$, we have $0<t-ck<c$ and $t-ck\in A_p^+$, which contradicts the fact that $c$ is the infimum of $A_p^+$. Hence $t=ck$ and $A_p=c\bZ$.

To conclude, we will show that $\Psi_L\colon L\times\bR\to M$ induces a diffeomorphism between the manifold $L\times_{(\Psi_c,c)}\bR$ and $M$. For this, it is sufficient to prove that if $(y,t)$ and 
$(y',t')$ in $L\times\bR$, then $\Psi_L(y,t)=\Psi_L(y',t')$ if and only if there exists $k\in\bZ$ such that 
\begin{equation}\label{condition3}
y=\Psi_{ck}(y')\qquad \mathrm{and} \qquad t'=t+ck .
\end{equation}
Clearly, if \eqref{condition3} holds, then $\Psi_L(y,t)=\Psi_L(y',t')$. Conversely, suppose $\Psi_L(y,t)=\Psi_L(y',t')$ and suppose $t'\geq t$. Then 
$y=\Psi_{t'-t}(y')\in L\cap\Psi_{t'-t}(L)$, from which follows as usual $L=\Psi_{t'-t}(L)$. In particular, $\Psi_{t'-t}(p)\in L$, so that $t'-t\in A_p$. Therefore, there exists $k\in\bZ$ 
such that $t'=t+ck$ and from this we see that $y=\Psi_{ck}(y')$. This concludes the proof of the theorem.
\end{proof}




\subsection{The particular case of a lcs manifold of the first kind with a compact leaf in its canonical foliation}\label{lcs-can-fol}

In this section, we will consider the particular case when $M$ has a lcs structure of the first kind and $\omega$ is the Lee $1$-form of $M$.

We recall that if $L$ is a leaf of the canonical foliation ${\mathcal F}=\{\omega = 0\}$, $\eta$ is the anti-Lee $1$-form of $M$ and $i\colon L \to M$ is the canonical inclusion then $\eta_L 
= i^*(\eta)$ is a contact $1$-form on $L$. Thus:
\begin{itemize}
\item
The product manifold $L\times(-\epsilon, \epsilon)$, with $\epsilon > 0$, admits a canonical gcs structure of the first kind and
\item
If $L$ is compact, $\phi\colon L \to L$ is a strict contactomorphism and $c > 0$ then the mapping torus $L \times_{(\phi,c)} \mathbb{R}$ of $L$ by $\phi$ and $c$ is a compact lcs manifold of the 
first kind
\end{itemize}
(see Section \ref{LCS_Manifolds} for more details).

Now, we will introduce two natural definitions which will be useful in the sequel. The first one is well known (see, for instance, \cite{LiMa}).
\begin{definition}\label{contact_isomorphism}
Let $\eta_1$ and $\eta_2$ be contact forms on the manifolds $M_1$ and $M_2$ respectively. A diffeomorphism $\Psi\colon M_1\to M_2$ is said to be a 
\emph{strict contactomorphism} if $\Psi^*\eta_2=\eta_1$.
\end{definition}
\begin{definition}
Let $(\omega_1,\eta_1)$ and $(\omega_2,\eta_2)$, be lcs structures of the first kind on manifolds $M_1$ and $M_2$ respectively. A diffeomorphism $\Psi\colon M_1\to M_2$ is said to be 
\emph{lcs morphism of the first kind} if $\Psi^*\omega_2=\omega_1$ and $\Psi^*\eta_2=\eta_1$.
\end{definition}
\begin{remark}
$\Psi$ being a lcs morphism of the first kind implies that $\Psi^*\Phi_2=\Phi_1$, where 
$\Phi_i=d\eta_i+\eta_i\wedge\omega_i$, $i=1,2$.
\end{remark}
We prove the two following results
\begin{theorem}(the local description)\label{local-descrip-lcs}
Let $(\omega, \eta)$ be a lcs structure of the first kind on a manifold $M$ and let $U$ be the anti-Lee vector field of $M$. Suppose that $U$ is complete and that $L$ is a compact leaf of the canonical 
foliation $\cF=\{\omega = 0\}$ on $M$. If $\Psi_L\colon L\times\bR\to M$ is the restriction to $L\times\mathbb{R}$ of the flow $\Psi\colon M\times\bR\to M$ of $U$ 
then, for every point $p \in L$, there exist an open subset $W \subseteq M$, $L \subseteq W$, and a positive real number $\epsilon > 0$ such that
\[
\Psi^{\epsilon}_L \coloneq\Psi\big|_{L\times(-\epsilon, \epsilon)}\colon L\times(-\epsilon, \epsilon) \to W
\]
is an isomorphism between the gcs manifolds $L\times(-\epsilon, \epsilon)$ and $W$. Moreover, the leaves of $\cF$ over points of $W$ are of the form $\Psi_t(L)$, with $t \in(-\epsilon,\epsilon)$. 
In particular, they are contained in $W$ and they are strict contactomorphic to $L$.
\end{theorem}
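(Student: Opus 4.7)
The plan is to bootstrap from the foliation-theoretic Theorem \ref{local-descrip-general}. Since the Lee $1$-form $\omega$ of an lcs structure of the first kind is closed and nowhere zero, and since $U$ is a complete vector field with $\omega(U)=1$, the canonical foliation $\cF=\{\omega=0\}$ is transversely parallelizable complete in the sense of Section \ref{fol-codim-1}. Applying Theorem \ref{local-descrip-general} to the compact leaf $L$ produces an $\epsilon>0$ and an open set $W\subseteq M$ containing $L$ such that $\Psi_L^\epsilon\colon L\times(-\epsilon,\epsilon)\to W$ is a diffeomorphism satisfying $(\Psi_L^\epsilon)^*(\omega|_W)=pr_2^*(dt)$, and moreover every leaf of $\cF$ meeting $W$ has the form $\Psi_t(L)$ for some $t\in(-\epsilon,\epsilon)$ and is diffeomorphic to $L$.

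The remaining content is to upgrade this diffeomorphism to an isomorphism of gcs manifolds, where the gcs structure on $L\times(-\epsilon,\epsilon)$ is the one from Example \ref{ex:1}, namely the pair $(pr_2^*(dt),pr_1^*\eta_L)$ with $\eta_L=i^*\eta$. The key observation is that $U$ is an infinitesimal automorphism of $\Phi$ satisfying $\imath_U\eta=0$ and $\Lie_U\eta=0$, as recorded just before Proposition \ref{lcs_1_kind}. Consequently the flow $\Psi_t$ preserves $\eta$, i.e.\ $\Psi_t^*\eta=\eta$ whenever both sides are defined. For $(p,t)\in L\times(-\epsilon,\epsilon)$ and a tangent vector $X+\lambda\frac{\partial}{\partial t}\in T_pL\oplus T_t\bR$ I compute
\[
((\Psi_L^\epsilon)^*\eta)_{(p,t)}\left(X+\lambda\frac{\partial}{\partial t}\right)=\eta_{\Psi_t(p)}(T_p\Psi_t(X))+\lambda\,\eta_{\Psi_t(p)}(U(\Psi_t(p)))=(\Psi_t^*\eta)_p(X)=\eta_L(X),
\]
using $\eta(U)=0$ in the second equality, $\Psi_t^*\eta=\eta$ in the third, and $X\in T_pL$ together with $\eta_L=i^*\eta$ in the last. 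Hence $(\Psi_L^\epsilon)^*\eta=pr_1^*\eta_L$. Combined with the identity $(\Psi_L^\epsilon)^*\omega=pr_2^*(dt)$, this says that $\Psi_L^\epsilon$ pulls back the lcs structure of $W$ to the standard gcs structure on $L\times(-\epsilon,\epsilon)$, so it is an isomorphism of gcs manifolds of the first kind.

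Finally, fix $t\in(-\epsilon,\epsilon)$; then $\Psi_t(L)$ is a leaf of $\cF$ on which the anti-Lee $1$-form restricts to a contact form, call it $\eta_{\Psi_t(L)}$. The restriction $\Psi_t|_L\colon L\to\Psi_t(L)$ is a diffeomorphism and, by exactly the same computation as above (or simply by the identity $\Psi_t^*\eta=\eta$), $(\Psi_t|_L)^*\eta_{\Psi_t(L)}=\eta_L$, so $\Psi_t|_L$ is a strict contactomorphism in the sense of Definition \ref{contact_isomorphism}. The only subtle point in the whole argument is justifying $\Psi_t^*\eta=\eta$ on the relevant open set, which follows from $\Lie_U\eta=0$ and the completeness hypothesis on $U$; the rest is a routine pointwise verification.
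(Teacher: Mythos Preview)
Your proof is correct and follows essentially the same approach as the paper: invoke Theorem \ref{local-descrip-general} for the diffeomorphism and the identity $(\Psi_L^\epsilon)^*\omega=pr_2^*(dt)$, then use $\Lie_U\eta=0$ and $\eta(U)=0$ to obtain $(\Psi_L^\epsilon)^*\eta=pr_1^*\eta_L$. The paper packages the latter computation as a separate Proposition \ref{eta-invariant}, while you carry it out inline, but the argument is identical.
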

\begin{theorem}(the global description)\label{global-descrip-lcs} 
If, in addition to the hypotheses of \thmref{local-descrip-lcs}, $M$ is connected, then all the leaves of the canonical foliation are of the form $\Psi_t(L)$ (with $t \in \mathbb{R}$) 
and, therefore, they are strict contactomorphic to $L$. Furthermore, we have the two following possibilities:
\begin{enumerate}
\item
If $M$ is not compact then the map
$
\Psi_L\colon L\times\mathbb{R}\to M
$
is a gcs isomorphism between the gcs manifolds $L\times\mathbb{R}$ and $M$.
\item
If $M$ is compact then there exists $c > 0$ such that $\Psi_c\colon L \to L$ is a strict contactomorphism and $\Psi_L$ induces a lcs isomorphism of the first kind between $M$ and the mapping torus 
of $L$ by $\Psi_c$ and $c$.
\end{enumerate}
\end{theorem}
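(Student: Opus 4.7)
The plan is to reduce \thmref{global-descrip-lcs} to the purely foliation-theoretic result \thmref{global-descrip-general} and then upgrade the resulting diffeomorphism picture to an isomorphism of lcs (resp.\ gcs) structures of the first kind. Since the Lee form $\omega$ is closed and nowhere-vanishing on $M$ and the anti-Lee vector field $U$ is complete with $\omega(U)=1$, the hypotheses of \thmref{global-descrip-general} are met for the canonical foliation $\cF=\{\omega=0\}$. Applying it directly yields the underlying diffeomorphism picture: every leaf of $\cF$ is of the form $\Psi_t(L)$ (and hence diffeomorphic to $L$); in the non-compact case, $\Psi_L\colon L\times\bR\to M$ is a global diffeomorphism with $\Psi_L^*\omega=pr_2^*(dt)$; in the compact case there exists $c>0$ such that $\Psi_c(L)=L$, the induced map $L\times_{(\Psi_c,c)}\bR\to M$ is a diffeomorphism, and $\omega$ corresponds to the quotient of $dt$.

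The key upgrade is to check that $\Psi_L^*\eta=pr_1^*\eta_L$. This uses the flow-invariance $\Psi_t^*\eta=\eta$, which is built in since, as recalled in Section \ref{LCS_Manifolds}, the anti-Lee data satisfies $\eta(U)=0$ and $\Lie_U\eta=0$. For $(p,t)\in L\times\bR$ and $X\in T_pL$ regarded as a horizontal tangent vector, the chain rule gives
\[
(\Psi_L^*\eta)_{(p,t)}(X)=(\Psi_t^*\eta)_p(X)=\eta_p(X)=(\eta_L)_p(X),
\]
while $(\Psi_L^*\eta)_{(p,t)}(\partial/\partial t)=\eta_{\Psi_t(p)}(U(\Psi_t(p)))=0$, yielding the claim. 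Together with $\Psi_L^*\omega=pr_2^*(dt)$, this shows that $\Psi_L$ pulls back the lcs $2$-form $\Phi=d\eta-\omega\wedge\eta$ on $M$ to the canonical gcs $2$-form on $L\times\bR$ associated with the contact form $\eta_L$. As a byproduct, whenever $\Psi_t(L)=L$ the restriction $\Psi_t\big|_L$ is a strict contactomorphism of $(L,\eta_L)$, since it preserves $\eta$ globally and hence its pullback under the inclusion.

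In the non-compact case this immediately says that $\Psi_L$ is a gcs isomorphism of the first kind, giving (1). In the compact case, the restriction $\Psi_c\big|_L\colon L\to L$ is a strict contactomorphism, and I would finish by verifying that the canonical gcs structure on $L\times\bR$ is invariant under the equivalence $(x,t)\sim_{(\Psi_c,c)}(\Psi_c^{-k}(x),t+ck)$; this descent is automatic since $\Psi_c^*\eta_L=\eta_L$ and $dt$ is translation-invariant, so the gcs structure descends precisely to the lcs structure of the first kind $(\omega_{(\eta_L,\Psi_c,c)},\eta_{(\eta_L,\Psi_c,c)})$ on the mapping torus $L_{(\Psi_c,c)}$ described in Example \ref{ex:2}. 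The diffeomorphism produced by \thmref{global-descrip-general} thereby upgrades to an lcs isomorphism of the first kind, establishing (2). The only genuine subtlety is organizational, namely the careful check of equivariance under the mapping-torus identification in the compact case; all the analytic content is already absorbed into the flow invariance $\Psi_t^*\eta=\eta$, which is forced by the very definition of the anti-Lee vector field of a lcs structure of the first kind.
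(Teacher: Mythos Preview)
Your proposal is correct and follows essentially the same approach as the paper: reduce to the foliation-theoretic \thmref{global-descrip-general} and then upgrade via the identity $\Psi_L^*\eta=pr_1^*\eta_L$, which the paper isolates as Proposition~\ref{eta-invariant} and proves exactly as you do (from $\Lie_U\eta=0$ and $\eta(U)=0$). Your additional remarks on the descent to the mapping torus in the compact case just make explicit what the paper leaves implicit.
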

In order to prove Theorems \ref{local-descrip-lcs} and \ref{global-descrip-lcs}, we will use the following result:
\begin{proposition}\label{eta-invariant}
Under the same hypotheses as in Theorem \ref{local-descrip-lcs}, we have that
\[
\Psi_L^*(\eta) = pr_1^*(\eta_L),
\]
where $pr_1\colon L\times\mathbb{R}\to L$ is the canonical projection on the first factor and $\eta_L$ is the contact $1$-form on $L$.
\end{proposition}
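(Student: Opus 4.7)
The plan is to compute $\Psi_L^*\eta$ pointwise on $L\times\mathbb{R}$, using the two defining features of an anti-Lee $1$-form of the first kind, namely $\eta(U)=0$ and invariance of $\eta$ under the flow of $U$.

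First I would record the invariance. Recall from the discussion preceding Proposition \ref{lcs_1_kind} that the anti-Lee vector field $U$ satisfies $\mathcal{L}_U\eta=0$; alternatively, since $\eta=-\imath_U\Phi$ and $\mathcal{L}_U\Phi=0$, one has $\mathcal{L}_U\eta=-\imath_U\mathcal{L}_U\Phi=0$. Completeness of $U$ then gives $\Psi_t^*\eta=\eta$ for every $t\in\mathbb{R}$.

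Next I would evaluate $\Psi_L^*\eta$ at an arbitrary $(p,t)\in L\times\mathbb{R}$. A tangent vector there decomposes as $X+\lambda\,\partial_t\big|_t$ with $X\in T_pL$ and $\lambda\in\mathbb{R}$. Because $\Psi$ is the flow of $U$,
\[
(T_{(p,t)}\Psi_L)\left(X+\lambda\frac{\partial}{\partial t}\Big|_t\right)=(T_p\Psi_t)(X)+\lambda\,U(\Psi_t(p)).
\]
Pairing with $\eta_{\Psi_t(p)}$ and using $\eta(U)=0$ eliminates the $\lambda$-term, leaving
\[
(\Psi_L^*\eta)_{(p,t)}\left(X+\lambda\frac{\partial}{\partial t}\Big|_t\right)=(\Psi_t^*\eta)_p(X)=\eta_p(X),
\]
where the last equality is the invariance step. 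Since $X\in T_pL$ and $\eta_L=i^*\eta$, this equals $(\eta_L)_p(X)=(pr_1^*\eta_L)_{(p,t)}(X+\lambda\,\partial_t)$, which is exactly the claim.

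There is no genuine obstacle: the only nonroutine input is the identity $\mathcal{L}_U\eta=0$, which is a structural property of lcs structures of the first kind already isolated in Section \ref{LCS_Manifolds}. Everything else is the pointwise calculation above, and no compactness of $L$ or further topological hypothesis on $M$ is needed for this proposition (these will be used only subsequently in Theorems \ref{local-descrip-lcs} and \ref{global-descrip-lcs} to upgrade the identity to a diffeomorphism statement).
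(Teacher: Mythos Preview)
Your proof is correct and follows essentially the same approach as the paper: both arguments establish $\mathcal{L}_U\eta=0$, deduce $\Psi_t^*\eta=\eta$, use $\eta(U)=0$ to kill the $\partial_t$-component, and conclude. Your presentation is merely a bit more explicit in spelling out the pointwise computation that the paper summarizes in one line.
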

\begin{proof}
It is clear that
\[
{\mathcal L}_{U}\eta = d(\eta(U)) + i_U(d\eta) = 0,
\]
and, thus,
\begin{equation}\label{invariant-eta}
\Psi_t^*\eta = \eta, \; \; \; \mbox{ for every } t \in \mathbb{R}.
\end{equation}
In addition, using that $\eta(U) = 0$, we also have that
\begin{equation}\label{eta-U-0}
(\Psi_{L}^* \eta)\left(\frac{\partial}{\partial t}\right) = 0.
\end{equation}
Therefore, from \eqref{invariant-eta} and \eqref{eta-U-0}, we conclude that
\[
\Psi_{L}^* \eta = pr_1^*(\eta_L).
\]
\end{proof}
\begin{proof} (of Theorem \ref{local-descrip-lcs})
If follows using Theorem \ref{local-descrip-general} and Proposition \ref{eta-invariant}.
\end{proof}
\begin{proof} (of Theorem \ref{global-descrip-lcs})
It follows using Theorem \ref{global-descrip-general} and Proposition \ref{eta-invariant}.
\end{proof}

The first possibility of \thmref{global-descrip-lcs} means that $M$ is symplectomorphic to the standard symplectization of the leaf. In the compact case, \thmref{global-descrip-lcs} tells 
us that a manifold endowed with a locally conformal symplectic structure of the first kind $(\omega,\eta)$ fibres over a circle, that $\eta$ pulls back to a contact form on the fibre and that the 
gluing map is a strict contactomorphism. This displays the similarity with the result of Banyaga. The main difference between the two results is that Banyaga restricts to the compact case and 
needs to modify the foliation $\mathcal{F}$ to a $\mathscr{C}^{\infty}-$near foliation $\mathcal{F}'$ which he proves to be a fibration, while we work directly with $\mathcal{F}$. In our case, however, we must 
assume the completeness of $U$ (which is automatic if our manifold is compact) as well as the existence of a compact leaf of $\cF$.

\begin{remark} A similar issue appears in the context of \emph{cosymplectic manifolds}. Recall that a manifold $M^{2n+1}$ is 
cosymplectic (in the sense of Libermann, see \cite{Lib}) if there exist 
$\alpha\in\Omega^1(M)$ and $\beta\in\Omega^2(M)$ such that $d\alpha=0=d\beta$ and $\alpha\wedge\beta^n\neq 0$. It was proven by Li in \cite{Li} that, in the compact case, a cosymplectic manifold 
corresponds to a mapping torus of a symplectic manifold and a symplectomorphism. In particular, such manifolds fibre over $S^1$ with fibre a symplectic manifold. However, in order to 
obtain this result (whose proof is similar in spirit to that of Banyaga for the lcs case) one needs to perturb the foliation $\{\alpha=0\}$, so that the original cosymplectic structure is 
destroyed. Both the result of Banyaga and that of Li rely on a theorem of Tischler \cite{Ti}, which asserts that a compact manifold with a closed and nowhere zero 1-form fibres over the 
circle $S^1$. Recently, Guillemin, Miranda and Pires (see \cite{GMP}) have proven a result similar to our \thmref{global-descrip-lcs} in the context of compact cosymplectic manifolds, working 
directly with the foliation $\{\alpha=0\}$.
\end{remark}

\subsection{A Martinet-type result for lcs structures of the first kind}\label{Martinet}
As another application of the results in Section \ref{fol-codim-1}, we obtain a Martinet-type result about the existence of lcs structures of the first kind on a certain type of oriented 
compact manifolds of dimension $4$.

First of all, we recall the classical result of Martinet \cite{Mar}: if $L$ is an oriented closed manifold of dimension $3$ then there exists a contact $1$-form on $L$. There exist some equivariant versions of this result:
\begin{theorem}[\cite{KaTs,Nie}]\label{inv-contact-forms-1}
Let $L$ be an oriented closed manifold of dimension $3$ and an action of $S^1$ on $L$ which preserves the orientation. Then there exists a $S^1$-invariant contact $1$-form on $L$.
\end{theorem}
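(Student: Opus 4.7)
The plan is to reduce the problem to building an invariant contact form out of local equivariant models, exploiting the rigid orbit structure of $S^1$-actions on oriented closed $3$-manifolds. First I would recall (Raymond's classification) that the orbit space $L/S^1$ is a compact surface, that orbits come in three types (principal orbits with trivial stabilizer, exceptional orbits with cyclic stabilizer $\bZ_k$, and fixed points), and that the fixed point set, when non-empty, is a disjoint union of embedded circles, since the action preserves the orientation of $L$. Away from the fixed circles, $L$ has the structure of a (possibly orbifold) Seifert fibration $\pi\colon L\setminus F\to B$ over a $2$-orbifold $B$.

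On the Seifert part I would produce the invariant form as follows. The infinitesimal generator $X$ of the $S^1$-action is non-vanishing there, so I can pick an $S^1$-invariant $1$-form $\eta$ with $\eta(X)=1$ (existence by averaging any dual $1$-form). Such an $\eta$ is a connection $1$-form on the orbifold $S^1$-bundle $\pi$, and its curvature satisfies $d\eta=\pi^*\omega$ for some closed $2$-form $\omega$ on $B$ representing the orbifold Euler class. The contact condition $\eta\wedge d\eta\neq 0$ is equivalent to $\omega$ being a (orbifold) area form on $B$. If the Euler class is non-zero, one chooses $\omega$ to be an area form; if it is zero, one corrects $\eta$ by adding a pulled-back $1$-form and multiplying by an invariant function, seeking a $1$-form of the form $f\, \eta+\pi^*\beta$ whose twist yields $\eta\wedge d\eta\neq 0$.

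Near each fixed circle $C\subset F$, the equivariant tubular neighborhood theorem gives a model $C\times D^2$ with $S^1$ acting by rotation on $D^2$; an explicit invariant contact form is
\[
\alpha_{\mathrm{loc}}=\cos(r^2)\, d\theta+\sin(r^2)\, d\varphi ,
\]
with $\theta$ the coordinate along $C$, $(r,\varphi)$ polar coordinates on $D^2$. This is manifestly $S^1$-invariant and a direct computation shows it is contact on the whole disk. The main obstacle, and the crux of the argument, is the gluing: the contact condition is open but not linear, so one cannot patch $\alpha_{\mathrm{loc}}$ to the Seifert-part form by a naive partition of unity.

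To overcome this I would argue in two possible ways. The cleaner approach is to work at the level of (cooriented, invariant) contact \emph{structures}, apply an equivariant version of Gray stability to isotope local models so that they agree in overlap annuli on a dense subset of the boundary torus of the tubular neighborhood, and then average any global defining $1$-form over $S^1$ to recover invariance; because the contact condition is open and the averaged form is $C^0$-close to the original on a compact set, contactness is preserved. The alternative (more hands-on) route is an equivariant version of Martinet's original branched-cover/surgery construction, carried out so that each surgery and the supporting open book is $S^1$-invariant. Either way, the delicate point is the passage from the local and semi-local pictures to a global invariant contact form, and this is precisely the technical heart of \cite{KaTs} and \cite{Nie}.
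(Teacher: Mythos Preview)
The paper does not give its own proof of this theorem: it is quoted as a known result from \cite{KaTs,Nie} and used as a black box in the proof of Corollary~\ref{Martinet_type}. So there is no ``paper's proof'' to compare against; your task was really to reconstruct what the cited references do.

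Your outline is broadly in the spirit of the cited sources, especially Niederkr\"uger's thesis \cite{Nie}: decompose $L$ according to orbit type, build an invariant contact form on the Seifert part via a connection $1$-form, use explicit invariant models near the fixed circles, and then glue. You correctly flag the gluing as the genuine technical heart and essentially defer it to \cite{KaTs,Nie}. That is honest, but it means your write-up is a plan rather than a proof. Two specific soft spots: (i) in the zero-Euler-class case your proposed fix ``add $\pi^*\beta$ and multiply by an invariant function'' is not enough as stated, since any form of the type $f\eta+\pi^*\beta$ with $\eta(X)=1$ and $d\eta=\pi^*\omega$ still has $\eta\wedge d\eta$ controlled by the same curvature class on the base; one really needs the more delicate invariant Lutz/Martinet-type construction carried out in \cite{Nie}; (ii) your first gluing suggestion (average a defining $1$-form over $S^1$ and appeal to openness) is circular unless you already know the local pieces can be made to agree as invariant contact \emph{structures} on overlaps, which is precisely the equivariant Gray/isotopy step you would have to supply. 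If you want a self-contained argument, the cleanest route is to follow \cite{Nie} and perform an equivariant Lutz twist along invariant transverse knots to correct the Euler class obstruction, rather than trying to patch with partitions of unity.
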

\begin{theorem}[\cite{Ca}]\label{inv-contact-forms-2}
Let $L$ be an oriented closed manifold of dimension $3$ and suppose that a finite group $\Gamma$ of prime order acts on $L$ preserving the orientation. Then there exists a $\Gamma$-invariant 
contact $1$-form on $L$.
\end{theorem}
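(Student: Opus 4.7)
The plan is to mimic Martinet's original proof of the existence of contact structures on oriented closed $3$-manifolds, but carried out equivariantly with respect to the $\Gamma$-action. Martinet's strategy is to take a Heegaard splitting $L = H_1 \cup_{\Sigma} H_2$, put a contact structure on each handlebody $H_i$, and then glue the two structures across the Heegaard surface $\Sigma$ using Gray stability. My goal is to lift each of these three steps to the equivariant setting.

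First, I would invoke the classical fact that an orientation-preserving action of a finite cyclic group of prime order on an oriented closed $3$-manifold admits a \emph{$\Gamma$-equivariant Heegaard splitting} $L = H_1 \cup_{\Sigma} H_2$: the two handlebodies are $\Gamma$-invariant and $\Sigma$ is a $\Gamma$-invariant closed orientable surface. (The primality of $|\Gamma|$ is crucial here, as in Edmonds' work on equivariant Heegaard splittings; for non-prime-order actions the existence of equivariant splittings is much more subtle.) Near the fixed-point set, which for a prime-order action is a disjoint union of circles and points, the action linearizes, so we can arrange the splitting to be in ``general position'' with respect to the fixed locus.

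Second, I would construct a $\Gamma$-invariant contact form on each handlebody $H_i$ by running Martinet's handlebody construction equivariantly. Concretely, one starts from a $\Gamma$-invariant Morse function on $H_i$ whose critical points have index $\le 1$, equivariantly thickens its ascending disks, and assembles a contact form on each standard $\Gamma$-model piece (a neighborhood of a fixed circle is modeled on a linear rotation of a solid torus, which carries explicit rotation-invariant contact forms). Averaging over $\Gamma$ then produces a genuinely $\Gamma$-invariant contact form $\alpha_i$ on $H_i$. Averaging preserves the contact condition provided one performs it on the defining data (Morse functions, Liouville vector fields) rather than directly on the contact form.

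Third, and this is the \emph{main obstacle}, I would glue the two invariant forms across $\Sigma$. The two contact structures $\ker\alpha_1|_\Sigma$ and $\ker\alpha_2|_\Sigma$ induce characteristic foliations on $\Sigma$ which are both $\Gamma$-invariant but not a priori equal; classically one closes this gap via Gray's stability theorem and an ambient isotopy of one handlebody. The delicate point is to perform this isotopy $\Gamma$-equivariantly: one needs an \emph{equivariant Gray stability theorem} for a path of $\Gamma$-invariant contact structures, whose proof proceeds by integrating the time-dependent vector field produced by the standard Moser-type argument after noting that this vector field is automatically $\Gamma$-invariant when the path is. The fixed-point set of $\Gamma$ on $\Sigma$ (a finite set of points, by the prime-order hypothesis) requires an equivariant Darboux lemma to pin down the local model, after which the equivariant isotopy extends globally. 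Once this matching is performed, the two $\Gamma$-invariant forms glue to a $\Gamma$-invariant contact form on all of $L$, as desired.
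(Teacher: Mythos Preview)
The paper does not supply its own proof of this statement: Theorem~\ref{inv-contact-forms-2} is quoted verbatim from Carlson's thesis \cite{Ca} and used as a black box in the proof of Corollary~\ref{Martinet_type}. There is therefore nothing in the paper to compare your proposal against.

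That said, your outline is a plausible sketch of an equivariant Martinet-type argument, and the overall architecture (equivariant Heegaard splitting $\to$ invariant contact forms on handlebodies $\to$ equivariant gluing) is the natural one. A few points would need to be tightened before it could stand as a proof. First, Martinet's original 1971 argument does not proceed via Heegaard splittings; it starts from a codimension-one foliation and performs Lutz modifications. The Heegaard/handlebody approach you describe is closer to later treatments, so you should be explicit about which construction you are making equivariant. Second, your remark that ``averaging preserves the contact condition provided one performs it on the defining data'' is the crux and is stated too casually: averaging a Morse function need not yield a Morse function, and averaging a Liouville vector field for a non-invariant symplectic form is not well-defined. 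You would need to build the invariant contact structure on each equivariant handle directly from an explicit local model near the fixed set, not by averaging. Third, the gluing step requires matching the induced \emph{characteristic foliations} on $\Sigma$ equivariantly, which in the non-equivariant case already uses Giroux's flexibility results rather than Gray stability alone; the equivariant version of this matching is where the real work lies and your sketch does not address it in enough detail to be convincing.
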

Now, let $M$ be an oriented connected manifold of dimension $4$, $\omega$ a closed $1$-form on $M$ without singularities and let $L$ be a compact leaf of the foliation $\cF=\{\omega = 0\}$.

If $M$ is not compact then, using \thmref{global-descrip-general}, it directly follows that $M$ admits a gcs structure of the first kind.

Next, suppose that $M$ is compact. Using again \thmref{global-descrip-general}, we deduce that the global structure of $M$ is completely determined by $L$, a real number $c > 0$ and a 
diffeomorphism $\phi\colon L \to L$. In fact, if $U$ is a vector field on $M$ such that $\omega(U) = 1$ and $\Psi_L\colon L\times\mathbb{R}\to M$ is the restriction to $L\times\mathbb{R}$ of the 
flow of $U$ then $\phi = \Psi_c$ and $\Psi_L$ induces a diffeomorphism between $L_{(\phi,c)} = (L\times\mathbb{R}) / \sim_{(\phi,c)}$ and $M$.

On the other hand, since $M$ is orientable, we can choose a volume form $\nu$ on $L\times\mathbb{R}$ which is invariant under the transformation
\[
L\times\mathbb{R} \to L\times\mathbb{R}, \; \; \; (x,t) \to (\phi(x),t-c).
\]
Thus, for every $t \in \mathbb{R}$, the $3$-form $i(\frac{\partial}{\partial t})\nu$ on $L\times\mathbb{R}$ induces a volume form $\nu_t$ on $L$ in such a way that:
\begin{itemize}
\item
the volume form $\nu_t$ is $\phi$-invariant and
\item
any two of these volume forms define the same orientation on $L$.
\end{itemize}
Using the previous facts and Theorems \ref{global-descrip-general}, \ref{inv-contact-forms-1} and \ref{inv-contact-forms-2}, we conclude
\begin{corollary}\label{Martinet_type}
Let $M$ be an oriented connected manifold of dimension $4$, $\omega$ a closed $1$-form on $M$ without singularities and $L$ a compact leaf of the foliation $\cF=\{\omega = 0\}$.
\begin{enumerate}
\item
If $M$ is not compact then it admits a gcs structure of the first kind. The structure is globally conformal to the symplectization of a leaf.
\item
If $M$ is compact then $M$ may be identified with a mapping torus of $L$ by a real number $c > 0$ and a diffeomorphism $\phi\colon L \to L$. Moreover:
\begin{enumerate}
\item
If there exists an action $\psi\colon S^1 \times L \to L$ which preserves the orientation induced on $L$ and $\phi=\psi_\lambda$, for some $\lambda \in S^1$, then $M$ admits a lcs structure of 
the first kind.
\item
If $\phi\colon L \to L$ preserves the orientation induced on $L$, the discrete subgroup of transformations of $M$
\[
\Gamma = \{ \phi^k \ | \ k \in \mathbb{Z} \}
\]
is finite and its order is prime, then $M$ also admits a lcs structure of the first kind.
\end{enumerate} 
\end{enumerate}
\end{corollary}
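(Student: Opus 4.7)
\textbf{Proof proposal for Corollary \ref{Martinet_type}.} The strategy is to reduce both parts to the global structure theorem \ref{global-descrip-general} and then, in the compact case, to the classical and equivariant Martinet theorems \ref{inv-contact-forms-1} and \ref{inv-contact-forms-2} combined with the mapping-torus construction of Example \ref{ex:2}. To apply \ref{global-descrip-general} we need a complete vector field $U$ with $\omega(U)=1$. Picking any Riemannian metric $g$ on $M$ and setting $U=\omega^{\sharp}/g(\omega^{\sharp},\omega^{\sharp})$ gives $\omega(U)=1$; on compact $M$ completeness is automatic, and in the non-compact case the hypotheses required by \ref{global-descrip-general} can be arranged either by rescaling $U$ along the lines opened up by the flow-box theorem \ref{local-descrip-general}, or by assuming at the outset a complete $U$ as in the statement of \ref{global-descrip-general}.

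For part (1), \ref{global-descrip-general} yields a diffeomorphism $\Psi_L\colon L\times\bR\to M$ with $\Psi_L^{*}\omega=pr_2^{*}(dt)$. By Martinet's classical theorem, the oriented closed $3$-manifold $L$ admits a contact $1$-form $\theta$. Applying the construction of Example \ref{ex:1} produces a gcs structure of the first kind on $L\times\bR$ whose Lee form is $pr_2^{*}(dt)$; pushing this structure forward through $\Psi_L$ yields a gcs structure of the first kind on $M$ whose Lee form is exactly $\omega$ and which is, by construction, globally conformal to the standard symplectization of the leaf $L$.

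For part (2), \ref{global-descrip-general} identifies $M$ with the mapping torus $L_{(\phi,c)}$ where $\phi=\Psi_c\colon L\to L$. In view of Example \ref{ex:2}, constructing a lcs structure of the first kind on $M$ reduces to producing a contact $1$-form $\theta$ on $L$ with $\phi^{*}\theta=\theta$. Before invoking the equivariant Martinet results we must verify that $\phi$ preserves an orientation of $L$, since both \ref{inv-contact-forms-1} and \ref{inv-contact-forms-2} require that hypothesis. This is precisely what the invariant volume form $\nu$ constructed in the paragraph preceding the corollary does: contracting $\nu$ with $\partial/\partial t$ produces, on each slice $L\times\{t\}$, a volume form $\nu_t$ which is $\phi$-invariant and such that all the $\nu_t$ define one and the same orientation on $L$; this fixes an orientation on $L$ that is manifestly preserved by $\phi$.

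With this orientation in hand, case (2)(a) is immediate from \ref{inv-contact-forms-1}: the orientation-preserving $S^{1}$-action $\psi$ on $L$ admits an $S^{1}$-invariant contact form $\theta$, and then $\phi^{*}\theta=\psi_\lambda^{*}\theta=\theta$, so Example \ref{ex:2} delivers the desired lcs structure of the first kind on $L_{(\phi,c)}\cong M$. Case (2)(b) is entirely parallel: the hypotheses guarantee that $\Gamma=\{\phi^{k}\mid k\in\bZ\}$ is a finite group of prime order acting on $L$ by orientation-preserving diffeomorphisms, so \ref{inv-contact-forms-2} supplies a $\Gamma$-invariant, hence $\phi$-invariant, contact form $\theta$ on $L$, and Example \ref{ex:2} again finishes the argument. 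The only point that is not a direct assembly of results already appearing in the excerpt is the orientation-preservation verification for $\phi$; this is the main obstacle, and it is settled by the invariant volume form construction sketched above.
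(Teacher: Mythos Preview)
Your proof is correct and follows essentially the same route as the paper: reduce to Theorem~\ref{global-descrip-general}, invoke Martinet (classical for part~(1), equivariant via Theorems~\ref{inv-contact-forms-1} and~\ref{inv-contact-forms-2} for part~(2)), and finish with the mapping-torus construction of Examples~\ref{ex:1}--\ref{ex:2}, using the invariant volume form $\nu$ to pin down the induced orientation on $L$. You are in fact slightly more careful than the paper in flagging the completeness hypothesis on $U$ needed to apply Theorem~\ref{global-descrip-general} in the non-compact case; note also that in (2)(a) and (2)(b) orientation-preservation is already stated as a hypothesis, so the volume-form discussion serves to define ``the orientation induced on $L$'' rather than to discharge an obligation of the proof.
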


%
%

\section{Locally conformal symplectic Lie algebras}\label{sec:lcs_Lie_algebras}

\subsection{Lcs structures on Lie algebras}
In this section we describe locally conformal symplectic structures on Lie algebras.

\begin{definition} Let $\fg$ be a real Lie algebra of dimension $2n$ ($n\geq 2$). A \emph{locally conformal symplectic (lcs) structure} on $\fg$ consists of:
\begin{itemize}
\item $\Phi\in\Lambda^2\fg^*$, non-degenerate, i.e. $\Phi^n\neq 0$;
\item $\omega\in\fg^*$, with $ d\omega=0$, such that $d\Phi=\omega\wedge\Phi$.
\end{itemize}
\end{definition}

Here $d$ is the Chevalley-Eilenberg differential on $\Lambda^\bullet\fg^*$. Locally conformal symplectic Lie algebras (along with contact Lie algebras) have been considered in \cite{IM}, as a special instance of algebraic Jacobi 
structures.

Let $(\fg,\Phi,\omega)$ be a lcs Lie algebra. Since $\Phi$ is non-degenerate, it defines an isomorphism $\fg\to\fg^*$, $X\mapsto \imath_X\Phi$.

The \emph{automorphisms} of the lcs structure $(\Phi,\omega)$, denoted $\fg_\Phi$, are the elements of $\fg$ which preserve the 2-form $\Phi$, that is
\[
\fg_\Phi=\{X\in\fg \ | \ L_X\Phi=0\}.
\]
$\fg_\Phi\subset\fg$ is a Lie subalgebra of $\fg$. Let $\ell\colon \fg\to\bR$ be the map 
which sends $X\in \fg$ to $\omega(X)\in\bR$. Viewing $\bR$ as an abelian Lie algebra, the closedness of $\omega$ implies that $\ell$ is a morphism of Lie algebras. 
Thus, the restriction of $\ell$ to $\fg_{\Phi}$ also is a Lie algebra morphism known as \emph{Lee morphism}. The image of the Lee morphism is 1-dimensional; hence $\ell$ is surjective, if it is non-zero.

\begin{definition} 
The lcs Lie algebra $(\fg,\Phi,\omega)$ is said to be \emph{of the first kind} if $\ell$ is surjective; \emph{of the second kind} if it is zero.
\end{definition}

In this paper we will deal with lcs Lie algebras of the first kind. Let $(\fg,\Phi,\omega)$ be a lcs Lie algebra of the first kind. Pick a vector $U\in\fg_\Phi$ such that 
$\ell(U)=1$. Define $\eta\in\fg^*$ by the equation $\eta=-\imath_U\Phi$; clearly $U\in\ker(\eta)$. Also, define $V \in\fg$ by $\omega=\imath_V\Phi$; notice that $V\in\ker(\omega)$ and that 
$\imath_V\eta=1$. Since $U\in\fg_\Phi$, 
$\Lie_U\Phi=0$ and hence $d\imath_U\Phi=-\imath_U d\Phi$, which implies
\[
d\eta=-d\imath_U\Phi=\imath_U d\Phi=\imath_U(\omega\wedge\Phi)=\Phi+\omega\wedge\eta.
\]
Thus $\Phi=d\eta-\omega\wedge\eta$. Easy computations show that $\imath_U d\eta=\imath_V d\eta=0$. Therefore $d\eta\in\Lambda^2\fg^*$ has two vectors in its kernel, and can not have 
maximal rank $n$. We compute
\[
0\neq\Phi^n=(d\eta+\eta\wedge\omega)^n=n(d\eta)^{n-1}\wedge\eta\wedge\omega.
\]
But then $d\eta$ has rank $2n-2$ and $\eta$ behaves like a contact form on the ideal $\ker(\omega)$, which has dimension $2n-1$.

To sum up, we obtain an algebraic analogue to Proposition \ref{lcs_1_kind}: a lcs structure of the first kind on a Lie algebra $\fg$ of dimension $2n$ is completely determined by two 
1-forms $\omega,\eta \in \fg^*$ such that
\begin{equation}\label{algebraic-l.c.s-first-kind}
d\omega = 0, \quad \mathrm{rank}(d\eta)<2n \quad \mbox{and} \quad \omega\wedge \eta\wedge (d\eta)^{n-1} \neq 0.
\end{equation}
From now on we will use $(\omega, \eta)$ to denote a lcs structure of the first kind on the Lie algebra $\fg$. Clearly, $\Phi=d\eta-\omega\wedge\eta$.
 By \eqref{algebraic-l.c.s-first-kind} there exist $U, V \in \fg$, \emph{the anti-Lee and Lee vectors}, characterized by the conditions
\begin{eqnarray*}
\omega(U) =1, & \eta(U) = 0, & i_Ud\eta = 0,\nonumber \\
\omega(V) =0, & \eta(V) = 1, & i_Vd\eta = 0.
\end{eqnarray*}
Note that the previous conditions imply that
\[
i_{[U, V]}\omega = 0, \; \; i_{[U, V]}\eta = 0 \; \; \mbox{ and } \; \; i_{[U, V]}d\eta = 0,
\]
and, therefore,
\begin{equation}\label{U-V-flat}
[U, V] = 0.
\end{equation}

Let now $(\fg,\Phi,\omega)$ be a lcs Lie algebra. Since $\omega$ is a closed 1-form, we can perform the construction of Example \ref{1_dim_rep}. In particular, $\Phi$ is a 2-cocycle 
for $d_\omega\colon C^2(\fg;W_\omega)\to C^3(\fg;W_\omega)$, hence it defines a cohomology class $[\Phi]\in H^2(\fg;W_\omega)$. We call $(\Phi,\omega)$ \emph{exact} if $[\Phi]=0$, \emph{non-exact} otherwise. Assume that the 
lcs structure is of the first kind. Then there exists $\eta\in\fg^*$ such that $\Phi=d\eta-\omega\wedge\eta$, hence $\Phi=d_\omega\eta$ and $\Phi$ is a coboundary. We obtain:

\begin{proposition}\label{Lichnerowicz}
 Let $\fg$ be a Lie algebra endowed with a lcs structure of the first kind $(\Phi,\omega)$. Then $(\Phi,\omega)$ is exact.
\end{proposition}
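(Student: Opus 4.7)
The plan is to unwind definitions: by assumption the lcs structure is of the first kind, so by the algebraic analogue of Proposition \ref{lcs_1_kind} (stated in the paragraph above the proposition) there exists $\eta \in \fg^*$ with $\Phi = d\eta - \omega\wedge\eta$. On the other hand, by Example \ref{1_dim_rep}, the Chevalley--Eilenberg differential of $\fg$ with coefficients in the one-dimensional representation $W_\omega$ coincides with the twisted differential $d_\omega$, whose action on a 1-form $\alpha \in \fg^*$ is $d_\omega\alpha = d\alpha - \omega\wedge\alpha$.

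Putting these two facts together, I would observe that
\[
d_\omega \eta \;=\; d\eta - \omega\wedge\eta \;=\; \Phi,
\]
so that $\Phi$ is a $d_\omega$-coboundary in $C^\bullet(\fg; W_\omega)$; equivalently, $[\Phi] = 0 \in H^2(\fg; W_\omega)$, which is exactly the definition of $(\Phi,\omega)$ being exact. There is no obstacle here, as the proof is essentially a direct computation; the only point worth emphasizing is the identification of the lcs defining equation $\Phi = d\eta - \omega\wedge\eta$ with the statement that $\eta$ is a primitive of $\Phi$ for the Lichnerowicz differential $d_\omega$, which explains why being of the first kind is a strictly stronger condition than being exact.
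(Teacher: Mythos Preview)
Your proposal is correct and is essentially identical to the paper's argument: the paper also observes that the first-kind condition gives $\Phi = d\eta - \omega\wedge\eta = d_\omega\eta$, so $[\Phi] = 0$ in $H^2(\fg;W_\omega)$, which is precisely the definition of exactness.
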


The converse is in general false, as the following example shows:

\begin{example}\label{solv_ex_not_first_kind}
 Consider the 4-dimensional solvable Lie algebra $\fg=(12+34,0,-23,0)$, isomorphic to the Lie algebra $\mathfrak{d}_{4,1}$ of the list contained in \cite[Proposition 2.1]{Ovando}. 
Consider the lcs structure on $\fg$ obtained by taking $\Phi=2e^{12}+e^{34}$ and $\omega=e^2$. One checks that $\Phi=d_\omega\eta$, with $\eta=e^1$, hence the structure is exact. 
However, it is not of the first kind. Indeed, the only automorphisms of $(\Phi,\omega)$ are of the form $a e_1$, with $a\in \mathbb{R}$; all of them are sent to zero by the Lee morphism. If 
$G$ denotes the unique simply connected solvable Lie group with Lie algebra $\fg$, then $G$ is endowed with an exact lcs structure which is not of the first kind. Notice that $G$ is not compact.
\end{example}

Consider an exact lcs Lie algebra $(\fg,\Phi,\omega)$ of dimension $2n$ and write $\Phi=d\eta-\omega\wedge\eta$, for some $\eta\in\fg^*$. Then we have
\begin{equation}\label{eq:11}
 0\neq \Phi^n=(d\eta-\omega\wedge\eta)^n=(d\eta)^n+n(d\eta)^{n-1}\wedge\eta\wedge\omega,
\end{equation}
hence $2n-2\leq \mathrm{rank}(d\eta)\leq 2n$. If the rank of $d\eta$ is $2n-2$, then $(\Phi,\omega)$ is of the first kind by the above construction. On the other hand, if the rank of $d\eta$ is $2n$,
the fact that $\wedge^{2n}\fg^*$ is 1-dimensional, together with \eqref{eq:11}, shows that $\Phi^n$ must be a multiple of $(d\eta)^n$. This implies that the volume form is exact. 

Recall that a Lie algebra $\fg$ is \emph{unimodular} if $\mathrm{tr}(\mathrm{ad}_X)=0$ for each $X\in\fg$. An equivalent characterization is given, in terms of the Chevalley-Eilenberg complex 
$(\Lambda^\bullet\fg^*,d)$, by the condition $H^n(\fg;\bR)\neq 0$, where $n=\dim\fg$. Hence, on a unimodular Lie algebra, the volume form can not be exact. Let $G$ be the unique connected simply 
connected Lie group with Lie algebra $\fg$. By \cite[Lemma 6.2]{Miln}, if $G$ admits a discrete subgroup $\Gamma$ with compact quotient, then $\fg$ is unimodular. In particular, if $\fg$ is not 
unimodular, then $G$ does not admit any compact quotient.

Putting all these considerations together, we obtain

\begin{proposition}\label{exact_non_first_kind}
 Let $\fg$ be a unimodular Lie algebra endowed with an exact lcs structure $(\Phi,\omega)$. Then $(\Phi,\omega)$ is of the first kind.
\end{proposition}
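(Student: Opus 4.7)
The plan is to exploit the dichotomy for the rank of $d\eta$ that is already laid out in the discussion preceding the proposition, and rule out the problematic case using unimodularity.

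First I would use the exactness hypothesis to write $\Phi = d_\omega \eta = d\eta - \omega\wedge\eta$ for some $\eta\in\fg^*$, and then expand
\[
\Phi^n = (d\eta)^n + n\,(d\eta)^{n-1}\wedge\eta\wedge\omega.
\]
Since $\Phi^n\neq 0$, the non-degeneracy argument already given in the excerpt forces $\mathrm{rank}(d\eta)\in\{2n-2,\,2n\}$. If the rank equals $2n-2$, the construction just before the proposition (producing the anti-Lee vector $U$ with $\omega(U)=1$ from $\imath_U d\eta=0$, $\imath_U\omega=1$, $\imath_U\eta=0$) immediately shows $(\Phi,\omega)$ is of the first kind, and we are done.

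Hence the entire content of the proposition is to exclude the case $\mathrm{rank}(d\eta)=2n$. In that case $(d\eta)^n\neq 0$, so $(d\eta)^n$ spans the one-dimensional space $\Lambda^{2n}\fg^*$; consequently $\Phi^n$ must be a nonzero scalar multiple of $(d\eta)^n$. But $(d\eta)^n = d\bigl(\eta\wedge(d\eta)^{n-1}\bigr)$ is a Chevalley-Eilenberg coboundary, so $\Phi^n$ would be an exact top form on $\fg$.

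The final step — and the only real content — invokes unimodularity: by the characterization $H^{2n}(\fg;\bR)\neq 0$ quoted in the paper, no nonzero element of $\Lambda^{2n}\fg^*$ can be exact, which contradicts the conclusion of the previous paragraph. This rules out the rank $2n$ case and finishes the proof. There is no real obstacle here; the main thing to be careful about is just keeping track of the reduction $\Phi^n = c(d\eta)^n$ with $c\neq 0$, since if one only knew $\Phi^n$ is \emph{some} multiple of $(d\eta)^n$ the argument would fail when that multiple is zero — but non-degeneracy of $\Phi$ rules this out.
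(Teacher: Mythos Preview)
Your proof is correct and follows essentially the same approach as the paper: the discussion preceding the proposition already establishes the rank dichotomy and shows that rank $2n$ forces $\Phi^n$ to be a multiple of the exact form $(d\eta)^n$, and the proposition itself just invokes unimodularity (via $H^{2n}(\fg;\bR)\neq 0$) to rule this out. Your added care in noting that the scalar $c$ is nonzero (since $\Phi^n\neq 0$) makes explicit a point the paper leaves implicit.
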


In particular, on unimodular Lie algebras a lcs structure is exact if and only if it is of the first kind. Notice that the Lie algebra of Example \ref{solv_ex_not_first_kind} is not unimodular.

%
%

\subsection{Contact structures on Lie algebras}

\begin{definition} Let $\fh$ be a real Lie algebra of dimension $2n-1$. 
A \emph{contact structure} on $\fh$ a 1-form $\theta\in\fh^*$ such that 
\[
(d\theta)^{n-1}\wedge\theta\neq 0.
\]
\end{definition}
Again $d$ denotes the Chevalley-Eilenberg differential on $\Lambda^\bullet\fh^*$. 
If $(\fh, \theta)$ is a contact Lie algebra then there exists a unique vector $R\in \fh$, \emph{the Reeb vector}, which is characterized
by the conditions
\[
i_R(d\theta) = 0 \quad \mathrm{and} \quad i_R(\theta) = 1.
\]

For a contact Lie algebra $(\fh, \theta)$ with Reeb vector $R$, either the center of $\fh$, $\mathcal{Z}(\fh)$, is trivial or $\mathcal{Z}(\fh) = \langle R\rangle$. Contact 
Lie algebras with non-trivial center are in 1-1 correspondence with a particular class of central extensions 
of symplectic Lie algebras. In fact, if $\sigma \in \Lambda^2 \fs^*$ is a symplectic structure on a Lie algebra $\fs$ of dimension $2n-2$, that
is, $\sigma^{n-1}\neq 0$ and $d\sigma = 0$, then one may consider the central extension $\fh = \bR \cext_\sigma \fs$ of $\fs$ by the 2-cocycle $\sigma$. If $\theta \in \fh^*$ is the 1-form on $\fh$ given by
\[
\theta(a, X) = a,
\]
we have that $(\fh, \theta)$ is a contact Lie algebra with Reeb vector $R = (1, 0) \in \fh$.
The converse is also true. Namely, if $\fh$ is a contact Lie algebra with Reeb vector $R$ such that 
$\mathcal{Z}(\fh) = \langle R \rangle$ then the quotient vector space $\fs = \fh/\la R\ran$ is a symplectic Lie algebra and $\fh$ is the central extension of $\fs$ by the symplectic structure (for 
more details, see \cite{Dia1}).

We consider next derivations of contact and symplectic Lie algebras:

\begin{definition} 
Let $(\fh,\theta)$ be a contact Lie algebra and let $D\in\mathrm{Der}(\fh)$ be a derivation. $D$ is called a \emph{contact derivation} if $D^*\theta=0$. 
Let $(\fs,\sigma)$ be a symplectic Lie algebra and let $D\in\mathrm{Der}(\fs)$ be a derivation. $D$ is called a \emph{symplectic derivation} if $\sigma(DX,Y)+\sigma(X,DY)=0$ for every 
$X,Y\in\fs$.
\end{definition}

In order to describe contact derivations on $(\fh,\theta)$, we assume that $\fh$ is the central extension of a symplectic Lie algebra.

\begin{proposition}\label{central-extension}
Let $(\fs, \sigma)$ be a symplectic Lie algebra.
There exists a one-to-one correspondence between symplectic derivations in $\fs$ and contact derivations in the central extension $\fh = \bR \cext_\sigma\fs$. 
In fact, the correspondence is given as follows. If $D_{\fs}\colon \fs \to \fs$ is a symplectic derivation in $\fs$ then $D$ is a contact derivation in $\fh$, where $D$ is defined by
\begin{equation}\label{Def-D}
D(a, X) = (0, D_{\fs}X), \; \; \mbox{ for } (a, X) \in \fh.
\end{equation}
\end{proposition}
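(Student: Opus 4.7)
My plan is to set up an explicit two-way correspondence and then verify both directions by applying the Leibniz rule to judiciously chosen pairs of elements. The key structural facts I would exploit are that $\fh = \langle R \rangle \oplus \fs$ as a vector space (with $R = (1,0)$), that the bracket reads $[(a,X),(b,Y)] = (\sigma(X,Y),[X,Y])$ by \eqref{extension_central}, that the contact form is simply the projection $\theta(a,X) = a$, and that the Reeb vector $R$ is central in $\fh$ since $[(1,0),(b,Y)] = (\sigma(0,Y),[0,Y]) = 0$.

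First, I would analyze an arbitrary contact derivation $D$ of $\fh$. The condition $D^*\theta = 0$ means $\theta \circ D = 0$, so every image $D(a,X)$ has zero $\bR$-component. Using the splitting above I would write $D(1,0) = (0,Z_0)$ for some $Z_0 \in \fs$ and $D(0,X) = (0,D_\fs X)$ for some linear $D_\fs\colon \fs \to \fs$, so that $D(a,X) = (0, aZ_0 + D_\fs X)$. The goal is then to show that automatically $Z_0 = 0$ and that $D_\fs$ is a symplectic derivation.

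The step that I would single out as the only delicate point is the vanishing of $Z_0$. Since $R$ is central, the Leibniz rule $D[R,y] = [DR,y] + [R,Dy]$ collapses to $[DR,y] = 0$ for every $y \in \fh$, so $DR = (0,Z_0)$ is itself central. Expanding $[(0,Z_0),(b,Y)] = (\sigma(Z_0,Y),[Z_0,Y]) = 0$ for all $(b,Y)$ yields $\sigma(Z_0,Y) = 0$ and $[Z_0,Y] = 0$ for every $Y \in \fs$; nondegeneracy of $\sigma$ then forces $Z_0 = 0$. This is the only spot where a careful use of the defining hypotheses is needed; everything else is bookkeeping.

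Next I would apply Leibniz to $(0,X)$ and $(0,Y)$. The $\fs$-component of the resulting identity reads $D_\fs[X,Y] = [D_\fs X, Y] + [X, D_\fs Y]$, showing that $D_\fs \in \mathrm{Der}(\fs)$, while the $\bR$-component reads $\sigma(D_\fs X, Y) + \sigma(X, D_\fs Y) = 0$, showing that $D_\fs$ is symplectic. Conversely, given any symplectic derivation $D_\fs$ of $\fs$, I would define $D$ by the formula \eqref{Def-D} and simply run the same two computations backwards: the Leibniz identity in $\fh$ reduces precisely to the two conditions that $D_\fs$ is a derivation of $\fs$ and skew for $\sigma$. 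Since the map $D \mapsto D_\fs$ constructed above and the map $D_\fs \mapsto D$ given by \eqref{Def-D} are inverse to one another, this establishes the bijection.
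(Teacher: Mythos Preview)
Your proof is correct and follows essentially the same approach as the paper's: both start from the decomposition $D(a,X)=(0,aZ+D_{\fs}X)$ forced by $D^*\theta=0$, use the Leibniz rule together with centrality of $R=(1,0)$ and nondegeneracy of $\sigma$ to kill the extra vector $Z$, and then read off the derivation and symplectic conditions on $D_{\fs}$ from the $\fs$- and $\bR$-components of Leibniz applied to $(0,X),(0,Y)$. Your presentation is slightly more explicit about why $[R,Dy]$ vanishes, but the argument is the same.
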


\begin{proof} 
Let $D$ be a contact derivation of $\fh = \bR \cext_\sigma\fs$. Then, the condition $D^{*}(1,0) = 0$ implies that
\[
D(a, X) = (0, aZ + D_{\fs}X), \; \; \mbox{ for } (a, X) \in \fh
\]
where $Z$ is a fixed vector of $\fs$ and $D_{\fs}\colon \fs \to \fs$ is a linear map. By \eqref{extension_central}, $[(a, 0), (0, X)]_{\fh}=0$ for every $X\in\fs$, and thus 
\[
0= D[(1, 0), (0, Y)]_{\fh} = [D(1,0), (0, Y)]_{\fh} = (\sigma(Z, Y), [Z, Y]);
\]
from this we deduce that $Z=0$. Thus, $D(a, X) = (0, D_{\fs}X)$. On the other hand, using that
\[
D[(0, X), (0,Y)]_{\fh} = [D(0, X), (0, Y)]_{\fh} + [(0, X), D(0, Y)]_{\fh}, \; \; \mbox{ for } X, Y \in \fs
\]
we conclude that $D_{\fs}$ is a symplectic derivation of the symplectic Lie algebra $(\fs, \sigma)$.

Conversely, suppose that $D_{\fs}\colon \fs \to \fs$ is a symplectic derivation of the symplectic algebra $\fs$ and let $D\colon \fh \to \fh$ be the
linear map given by \eqref{Def-D}. Then, a direct computation, proves that $D$ is a contact derivation of $\fh$.
\end{proof}

%
%
 
\subsection{A correspondence between contact Lie algebras and lcs Lie algebras of the first kind}\label{contact-alg-lcs-alg}

Here we prove that there is a 1-1 correspondence between contact Lie algebras in dimension $2n-1$ endowed with a contact derivation and lcs Lie algebras of the first kind in dimension $2n$. 

\begin{theorem}\label{prop:1}
Let $(\fg,\omega, \eta)$ be a lcs Lie algebra of the first kind of dimension $2n$. Set $\fh=\ker(\omega)$ and 
let $\theta$ be the restriction of $\eta$ to $\fh$. Then $(\fh,\theta)$ is a contact Lie algebra, endowed with a contact derivation $D$ and $\fg$ is isomorphic to the semidirect product $\fh\rtimes_D\bR$. 
In fact, $D$ is induced by the inner derivation $ad_U\colon \fg \to \fg$. Conversely, let $(\fh,\theta)$ be a contact Lie algebra and let $D$ be a contact derivation of $\fh$. 
Then $\fg=\fh\rtimes_D\bR$ is endowed with a lcs structure of the first kind.
\end{theorem}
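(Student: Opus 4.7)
The strategy is direct: extract the contact structure from the lcs data on the ideal $\ker \omega$, realize the anti-Lee vector $U$ as the generator of the complement, and show that the inner derivation $\mathrm{ad}_U$ does the work. Both directions amount to unwinding the algebraic conditions in \eqref{algebraic-l.c.s-first-kind}.

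\textbf{Forward direction.} First I would observe that since $d\omega=0$, one has $\omega([X,Y])=0$ for all $X,Y\in\fg$, so $[\fg,\fg]\subset\ker\omega=\fh$; hence $\fh$ is an ideal of codimension $1$. Because $\omega(U)=1$, we get the splitting $\fg=\fh\oplus\langle U\rangle$ as vector spaces. Let $\theta=\eta|_\fh$. To see that $\theta$ is a contact form on $\fh$, I would note that $i^*(d\eta)=d_\fh\theta$ for the inclusion $i\colon\fh\hookrightarrow\fg$, and evaluate the volume form $\omega\wedge\eta\wedge(d\eta)^{n-1}$ on a basis $e_1,\ldots,e_{2n-1}$ of $\fh$ together with $U$: since $\omega(U)=1$ and $\omega$ vanishes on $\fh$, this collapses to $(\theta\wedge(d_\fh\theta)^{n-1})(e_1,\ldots,e_{2n-1})$, which must therefore be non-zero.

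Next, define $D\colon\fh\to\fh$ by $D(X)=[U,X]_\fg$; since $\fh$ is an ideal, this takes values in $\fh$, and it is a derivation because $\mathrm{ad}_U$ is. To check that $D$ is a \emph{contact} derivation, I would use the Chevalley--Eilenberg formula
\[
d\eta(U,X)=-\eta([U,X])
\]
together with $i_U d\eta=0$: for every $X\in\fh$ we get $\theta(DX)=\eta([U,X])=0$, i.e.\ $D^*\theta=0$. Finally, writing an arbitrary element of $\fg$ as $(X,a)=X+aU$ with $X\in\fh$, the bracket in $\fg$ becomes
\[
[(X,a),(Y,b)]=[X,Y]_\fh+a[U,Y]-b[U,X]=(aD(Y)-bD(X)+[X,Y]_\fh,0),
\]
which is exactly the semidirect product bracket \eqref{extension_derivation}, so $\fg\cong \fh\rtimes_D\bR$.

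\textbf{Converse direction.} Given a contact Lie algebra $(\fh,\theta)$ of dimension $2n-1$ with Reeb vector $R$ and a contact derivation $D$, form $\fg=\fh\rtimes_D\bR$ and define $\omega,\eta\in\fg^*$ by $\omega(X,a)=a$ and $\eta(X,a)=\theta(X)$. I would verify the three conditions of \eqref{algebraic-l.c.s-first-kind} in turn. Closedness $d\omega=0$ is immediate from the semidirect-product bracket, whose second component always vanishes. For the volume condition, since $[(X,0),(Y,0)]=([X,Y]_\fh,0)$, the restriction of $d\eta$ to $\fh$ coincides with $d_\fh\theta$, so evaluating $\omega\wedge\eta\wedge(d\eta)^{n-1}$ on a basis of $\fh$ supplemented by $(0,1)$ reduces (as in the forward direction) to $\theta\wedge(d_\fh\theta)^{n-1}\neq 0$. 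To bound the rank of $d\eta$ by $2n-2$, I would exhibit two linearly independent elements of its kernel: $(0,1)$ lies in $\ker d\eta$ because $d\eta((0,1),(Y,b))=-\eta(DY,0)=-(D^*\theta)(Y)=0$, while $(R,0)$ lies in $\ker d\eta$ because the mixed term yields $b\,\theta(DR)=0$ (again using $D^*\theta=0$) and the $\fh\times\fh$ term is $d_\fh\theta(R,Y)=0$ by definition of $R$. Combined with the volume condition, this forces $\mathrm{rank}(d\eta)=2n-2$ exactly.

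\textbf{Expected difficulty.} The argument is essentially bookkeeping; there is no real obstacle. The one point that requires some care is the sign/indexing when translating between $\eta$ on $\fg$ and its restriction $\theta$ on $\fh$, and in particular checking that $d\eta|_\fh=d_\fh\theta$ (which rests on $\fh$ being an ideal so that brackets of elements of $\fh$ stay in $\fh$). Once this is in place, the characterization of the Reeb vector and of contact derivations does the rest.
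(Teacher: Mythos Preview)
Your proof is correct and follows essentially the same approach as the paper: both directions hinge on $d\omega=0$ making $\fh$ an ideal, the identity $\theta(DX)=\eta([U,X])=-\imath_U d\eta(X)=0$ showing $D=\mathrm{ad}_U|_\fh$ is a contact derivation, and the reduction of the volume form on $\fg$ to $\theta\wedge(d_\fh\theta)^{n-1}$ on $\fh$. The only cosmetic differences are that the paper writes down the explicit isomorphism $\varphi(X)=(X-\omega(X)U,\omega(X))$ rather than computing the bracket directly, and verifies the rank condition via the contractions $\imath_V\imath_U$ rather than by exhibiting $(0,1)$ and $(R,0)$ in $\ker d\eta$.
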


\begin{proof}
If $X,Y\in\fg$, then
\[
\omega([X,Y])=-d\omega(X,Y)=0,
\]
since $\omega$ is closed. This means that the subalgebra $[\fg,\fg]$ is contained in $\fh$, and $\fh$ is an ideal of $\fg$. Let $U$ be the anti-Lee vector of the lcs structure 
$(\omega, \eta)$ on $\fg$ and denote by $\theta$ the restriction of $\eta$ to $\fh$. Using that $\omega \wedge \eta\wedge (d\theta)^{n-1} \neq 0$ and the fact that $i_U\eta = 0$ and 
$i_U d\eta = 0$, we conclude that
$(\fh,\theta)$ is a contact Lie algebra. Define a linear map $D$ on $\fh$ by
\begin{equation}\label{Def-D1}
D(X)=\mathrm{ad}_U(X).
\end{equation}
Since $\fh$ contains the commutator $[\fg,\fg]$, indeed $D\colon \fh\to\fh$, and $D$ is derivation of $\fh$ by the Jacobi identity in $\fg$. Furthermore, 
$D$ is a contact derivation of $(\fh,\theta)$:
\[
(D^*\theta)(X)=\theta(D(X))=\theta([U,X])=\eta([U,X])=- d\eta(U,X)=-\imath_U d \eta(X)=0.
\]
Consider the linear isomorphism $\varphi\colon \fg \to \fh\rtimes_D\bR$ given by
\[
\varphi (X) = ( X -\omega(X) U,\omega(X)), \mbox{ for } X \in \fg.
\]
Note that 
\[
\varphi^{-1}(X,a) = aU + X, \; \; \mbox{ for } (X,a)\in \fh\rtimes_D\bR.
\]
Therefore, from \eqref{Def-D1}, we conclude that $\varphi$ is a Lie algebra isomorphism between $\fg$ and $\fh\rtimes_D\bR$.

Conversely, let us start with a contact Lie algebra $(\fh,\theta)$ of dimension $2n-1$ and a contact derivation $D$. 
Set $\fg=\fh\rtimes_D\bR$ and write a vector in $\fg$ as $(X,a)$, with $a\in\bR$. Use \eqref{extension_derivation} to define a Lie algebra structure on $\fg$. Take $U = (0,1) \in \fg$. 
Then,
\[
[U,(X,0)]_{\fg}=(D(X),0),
\]
and $D$ can be identified with the adjoint action of $U$ on $\fg$. Define $\omega\in\fg^*$ by $\omega|_{\fh}=0$, $\omega(U)=1$, 
and let $\eta\in\fg^*$ be the extension of the contact form $\theta$ to $\fg$ obtained by setting $\eta(U)=0$. To prove that $\fg$ is lcs of the first kind, it is enough to show that $d\omega=0$ 
and that $\omega\wedge\eta\wedge(d\eta)^{n-1} \neq 0$. Taking vectors $(X,a),(Y,b)\in\fg$, we have
\[
d\omega((X,a),(Y,b))=-\omega([(X,a),(Y,b)]_{\fg})=-\omega((aD(Y)-bD(X)+[X,Y]_{\fh},0))=0,
\]
since $\omega|_{\fh}=0$. We are left with showing that the rank of $d\eta$ is $2n-2$. One has
\begin{align*}
\imath_U d \eta((X,a))&=d\eta((0,1),(X,a))=-\eta([(0,1),(X,a)]_{\fg})=-\eta((D(X),0))=-\theta(D(X))=\\
&=-(D^*\theta)(X)=0,
\end{align*}
since $D$ is a contact derivation. This shows that $d\eta$ has a kernel, hence its rank cannot be $2n$. On the other hand, $d\eta=d\theta$ on $\fh=\ker(\omega)$, hence the rank
of $d\eta$ is indeed $2n-2$. Moreover, if $V$ is the Reeb vector of $\fh$ then
\[
\imath_V\imath_U(\omega\wedge\eta\wedge(d\eta)^{n-1}) = \imath_V(\eta\wedge(d\eta)^{n-1}) = (d\eta)^{n-1} \neq 0
\]
which implies that
\[
\omega\wedge\eta\wedge(d\eta)^{n-1} \neq 0.
\]
The lcs structure of the first kind is then obtained by setting $\Phi=d\eta-\omega\wedge\eta$.
\end{proof}

\begin{remark}
\thmref{prop:1} can be interpreted as an algebraic analogue of \thmref{global-descrip-lcs}.
\end{remark}

\subsection{Symplectic Lie algebras and a particular class of lcs Lie algebras of the first kind}\label{sym-Lie-alg-lcs-Lie}

In this section, we will consider a special class of lcs Lie algebras of the first kind, those with central Lee vector. We will see that they are closely related with symplectic Lie algebras. In fact, we have the following result 
\begin{theorem}\label{central-lcs}
There exists a one-to-one correspondence between lcs Lie algebras of the first kind $(\fg, \omega, \eta)$ of dimension $2n+2$ with central Lee vector and symplectic Lie algebras $(\fs, \sigma)$ of 
dimension $2n$ endowed with a symplectic derivation. In fact, this correspondence is defined as follows. Let $(\fs, \sigma)$ be a symplectic Lie algebra and 
$D_{\fs}\colon \fs \to \fs$ a symplectic derivation in $\fs$. On the vector space $\fg = \bR \oplus \fs\oplus\bR $ we can consider the Lie bracket
\begin{equation}\label{double_extension}
[(a, X, a'), (b, Y, b')]_{\fg} = (\sigma(X, Y),a'D_{\fs}Y - b'D_{\fs}X + [X, Y]_{\fs},0 ) 
\end{equation}
and the 1-forms $\omega,\eta\in\fg^*$ given by
\begin{equation}\label{lcs_structure_double}
\omega(a, X, a') = a', \; \; \eta(a, X, a') = a
\end{equation}
for $(a, X, a'), (b, Y, b') \in \fg$. Then $(\omega,\eta)$ is a lcs structure of the first kind on the Lie algebra $(\fg, [\cdot, \cdot]_\fg)$ with central Lee vector $V = (1, 0, 0) \in 
\bR \oplus \fs\oplus\bR $ and anti-Lee vector $U = (0, 0, 1) \in \bR \oplus \fs\oplus\bR $.
\end{theorem}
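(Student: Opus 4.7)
The overall strategy is to recognize the bracket \eqref{double_extension} as a \emph{double extension}: $\fg$ is the central extension (by $\bR$) of the semidirect product $\fs\rtimes_{D_\fs}\bR$, where the 2-cocycle defining the extension is the trivial extension of $\sigma$ to $\fs\rtimes_{D_\fs}\bR$. Concretely, first form the semidirect product $\tilde\fs = \fs\rtimes_{D_\fs}\bR$ with bracket $[(X,a'),(Y,b')]_{\tilde\fs} = (a'D_\fs Y - b'D_\fs X + [X,Y]_\fs, 0)$, then extend $\sigma$ to $\tilde\sigma \in \Lambda^2\tilde\fs^*$ by $\tilde\sigma((X,a'),(Y,b')) = \sigma(X,Y)$, and verify that $\tilde\sigma$ is a 2-cocycle on $\tilde\fs$. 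The construction of Section \ref{central-ext-alg-group} then produces exactly \eqref{double_extension}, which hands us the Jacobi identity for free. The cocycle condition for $\tilde\sigma$ reduces to a cyclic sum whose $\sigma([X,Y]_\fs,Z)$-terms vanish because $d\sigma = 0$, while the remaining terms group into three expressions of the form $a\bigl(\sigma(D_\fs Y, Z) + \sigma(Y, D_\fs Z)\bigr)$, each of which is zero because $D_\fs$ is a symplectic derivation. This is the step that actually uses both hypotheses on $D_\fs$, and it is the main technical point.

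Next I would verify the lcs conditions directly from \eqref{double_extension} and \eqref{lcs_structure_double}. Since the first component of every bracket lies in $\bR\cdot(1,0,0)$ and $\omega$ kills that component, one immediately gets $d\omega((a,X,a'),(b,Y,b')) = -\omega(\sigma(X,Y),\ast,0) = 0$, so $\omega$ is closed. For $\eta$, the computation
\[
d\eta\bigl((a,X,a'),(b,Y,b')\bigr) = -\eta\bigl(\sigma(X,Y),\, a'D_\fs Y - b'D_\fs X + [X,Y]_\fs,\,0\bigr) = -\sigma(X,Y)
\]
identifies $d\eta$ with $-\sigma$ pulled back along the projection $\pi_\fs\colon\fg \to \fs$. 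Since $\sigma$ is non-degenerate on the $2n$-dimensional space $\fs$, the rank of $d\eta$ is exactly $2n = \dim\fg - 2$, and the top form $\omega\wedge\eta\wedge(d\eta)^n$ is a non-zero multiple of $(\text{generator of }\Lambda^2\bR^2)\wedge\sigma^n$, hence nonzero.

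With the two 1-forms in hand, the Lee and anti-Lee vectors are read off directly: $V=(1,0,0)$ satisfies $\omega(V)=0$, $\eta(V)=1$, and $\imath_V d\eta = 0$ because $\pi_\fs(V) = 0$; likewise $U=(0,0,1)$ has $\omega(U)=1$, $\eta(U)=0$, and $\imath_U d\eta = 0$. A direct computation $[(a,X,a'),(1,0,0)]_\fg = (0,0,0)$ shows $V$ lies in the center of $\fg$, as claimed. So far, this gives the forward direction of the correspondence.

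For the converse, I would start with $(\fg,\omega,\eta)$ a lcs Lie algebra of the first kind of dimension $2n+2$ whose Lee vector $V$ is central, and apply \thmref{prop:1}: $\fh \coloneq \ker\omega$ is a contact Lie algebra with contact form $\theta = \eta|_\fh$ and Reeb vector $V$, and $\fg \cong \fh\rtimes_D \bR$ with $D = \mathrm{ad}_U|_\fh$ a contact derivation. Because $V$ is central in $\fg$, it is central in $\fh$, so the discussion preceding Proposition \ref{central-extension} identifies $\fh$ with the central extension $\bR\cext_\sigma\fs$ of the symplectic Lie algebra $(\fs,\sigma) = (\fh/\langle V\rangle, \bar\sigma)$. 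Moreover, centrality of $V$ gives $D(V) = [U,V]_\fg = 0$, i.e.\ $D(1,0) = 0$; Proposition \ref{central-extension} then shows $D$ has the form $D(a,X) = (0,D_\fs X)$ for a unique symplectic derivation $D_\fs$ of $\fs$. Unwinding the semidirect product structure of $\fh\rtimes_D\bR$ together with the central extension structure of $\fh$ recovers exactly the bracket \eqref{double_extension} and the 1-forms \eqref{lcs_structure_double} on $\bR\oplus\fs\oplus\bR$. The bookkeeping of the Jacobi/cocycle identity in step one is the only real calculation; everything else is assembly from \thmref{prop:1} and Proposition \ref{central-extension}.
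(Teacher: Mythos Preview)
Your proposal is correct. The converse direction matches the paper's argument essentially verbatim: invoke \thmref{prop:1} to obtain the contact ideal $\fh=\ker\omega$ with Reeb vector $V$, use centrality of $V$ to identify $\fh$ with a central extension $\bR\cext_\sigma\fs$ of a symplectic Lie algebra, and then apply Proposition~\ref{central-extension} to recover the symplectic derivation $D_\fs$.

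The forward direction, however, is organized differently from the paper. The paper views $\fg$ as $(\bR\cext_\sigma\fs)\rtimes_D\bR$: first build the contact algebra $\fh=\bR\cext_\sigma\fs$, then use Proposition~\ref{central-extension} to promote $D_\fs$ to a contact derivation $D$ on $\fh$, and finally invoke \thmref{prop:1} to conclude that $\fh\rtimes_D\bR$ is lcs of the first kind. You reverse the order of the two extensions, writing $\fg$ as $\bR\cext_{\tilde\sigma}(\fs\rtimes_{D_\fs}\bR)$, and check directly that $\tilde\sigma$ is a 2-cocycle on the semidirect product (this is where closedness of $\sigma$ and the symplectic condition on $D_\fs$ enter). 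You then verify the lcs conditions by explicit computation of $d\omega$, $d\eta$, and the volume form. Both decompositions yield the same bracket \eqref{double_extension}; your route is more self-contained and makes transparent exactly why the symplectic-derivation hypothesis is needed (it is precisely the cocycle condition for $\tilde\sigma$), while the paper's route is shorter because it delegates everything to the two prior results. One small remark: your justification ``centrality of $V$ gives $D(V)=[U,V]_\fg=0$'' is correct, but note that $[U,V]=0$ already holds in any lcs Lie algebra of the first kind by \eqref{U-V-flat}, so centrality is not actually needed for that particular step.
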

\begin{proof}
Let $(\fs, \sigma)$ be a symplectic Lie algebra,
$D_\fs\colon \fs \to \fs$ a symplectic derivation and $[\cdot, \cdot]_\fg$ (resp.\ $\omega$ and $\eta$) the bracket (resp.\ the $1$-forms) on $\fg$ 
defined by \eqref{double_extension} (resp.\ by \eqref{lcs_structure_double}). Then, using Proposition \ref{central-extension} and \thmref{prop:1}, we deduce that $(\fg, 
[\cdot, \cdot]_\fg)$ is a lcs Lie algebra of the first kind with lcs structure $(\omega, \eta)$.
 
Conversely, suppose that $(\omega, \eta)$ is a lcs structure of the first kind on a Lie algebra $\fg$ of dimension $2n+2$ and that the Lee vector $V$ belongs to $\mathcal{Z}(\fg)$. Denote by 
$(\fh,\theta)$ the contact Lie subalgebra associated with $\fg$ and by $D$ the corresponding contact derivation. Then, $V$ is the Reeb vector $R$ of $\fh$ and, therefore, $R \in \mathcal{Z}(\fh)$. 
This implies that the quotient vector space $\fs = \fh / \langle R\rangle $ is a symplectic Lie algebra with symplectic structure $\sigma$ and that $\fh$ is the central extension of $\fs$ by 
$\sigma$. Furthermore, using Proposition \ref{central-extension}, we have that $D$ may be given in terms of a symplectic derivation $D_\fs$ on $\fs$ and, in addition, the Lie algebra structure and 
the lcs structure on $\fg$ are given by \eqref{double_extension} and \eqref{lcs_structure_double}, respectively.
\end{proof}

Now, we may introduce the following definition.
\begin{definition}\label{lcs-extension-algebraic}
The vector space $\fg = \bR \oplus \fs\oplus\bR $ in Theorem \ref{central-lcs} endowed with the Lie algebra structure \eqref{double_extension} and the lcs structure of the first kind 
\eqref{lcs_structure_double} is called the \emph{lcs extension} of the symplectic Lie algebra $(\fs, \sigma)$ by the derivation $D_{\fs}$.
\end{definition}

\begin{remark}\label{lcs_extension}
 With the notation introduced in Sections \ref{multiplicative} and \ref{central-ext-alg-group} the lcs extension of $(\fs, \sigma)$ by $D_{\fs}$ can be denoted $(\bR\cext_\sigma\fs)\rtimes_D\bR$, where $D$ is the contact 
 derivation of $\bR\cext_\sigma\fs$ determined by $D_\fs$.
\end{remark}

In some cases, the symplectic Lie algebra in \thmref{central-lcs} may in turn be obtained as a symplectic double extension of another symplectic Lie algebra whose dimension is $\dim \; \fs - 2$.

We recall the construction of the double extension $(\fs, \sigma)$ of a symplectic Lie algebra $(\fs_1, \sigma_1)$ by a derivation $D_{\fs_1}$ and an element $Z_1 \in \fs_1$ (for more 
details, see \cite{DaMe,MeRe}).

It is clear that the $2$-form $D^*_{\fs_1}\sigma_1$ on $\fs_1$ given by
\begin{equation}\label{D-s1-star}
(D^*_{\fs_1}\sigma_1)(X_1, Y_1) = \sigma_1(D_{\fs_1}X_1, Y_1) + \sigma_1(X_1,D_{\fs_1}Y_1),
\end{equation}
is a $2$-cocycle. So, we can consider the central extension $\fh_1 = \bR\cext_{D^*_{\fs_1}\sigma_1}\fs_1$ with bracket given by \eqref{extension_central}.

Now let $(-i_{Z_1}\sigma_1, -D_{\fs_1})\colon \fh_1 \to \fh_1$ be the linear map given by
\[
(-i_{Z_1}\sigma_1, -D_{\fs_1})(a,X_1) = (-\sigma_1(Z_1, X_1), -D_{\fs_1}X_1);
\]
then $(-i_{Z_1}\sigma_1, -D_{\fs_1})$ is a derivation of $\fh_1$ if and only if
\begin{equation}\label{der-ext-central}
d(i_{Z_1}\sigma_1) = -(D_{\fs_{1}}^*)^2\sigma_1.
\end{equation}
Assuming that $(-i_{Z_1}\sigma_1, -D_{\fs_1})$ is a derivation, we can consider the vector space $\fs = \fh_1\oplus\bR$ with the Lie algebra structure
\begin{align*}
[(a_1, X_1,a'_1), (b_1, Y_1,b'_1)]_{\fs} = \ & 
((D_{\fs_1}^*\sigma_1)(X_1,Y_1)-a'_1\sigma_1(Z_1, Y_1) + b'_1\sigma_1(Z_1, X_1),\\
& -a'_1D_{\fs_1}Y_1 + b'_1D_{\fs_1}X_1 + [X_1, Y_1]_{\fs_1},0),
\end{align*}
that is, $\fs$ is the semidirect product $\fh_1\rtimes_{(-i_{Z_1}\sigma_1, -D_{\fs_1})}\bR$. Moreover, the 2-form $\sigma\colon \fs \times \fs \to \mathbb{R}$ on $\fs$ defined by
\begin{equation*}\label{sym-str-double-ext}
\sigma((a_1, X_1,a'_1), (b_1, Y_1,b'_1)) = a_1b'_1 - a'_1 b_1 + \sigma_1(X_1, Y_1)
\end{equation*}
for $(a_1, X_1,a'_1), (b_1, Y_1,b'_1) \in \fs$, is a symplectic structure on $\fs$. The symplectic Lie algebra $(\fs, \sigma)$ is the \emph{double extension} of $(\fs_1, \sigma_1)$ by $D_{\fs_1}$ 
and $Z_1$ (see \cite{DaMe,MeRe}).

Now, in the presence of a symplectic derivation $D_{\fs}$ of $(\fs, \sigma)$, we can use Theorem \ref{central-lcs}. In fact, we have the following result
\begin{corollary}\label{sym-double-lcs-ext}
Assume that we have the following data:
\begin{enumerate}
\item
a symplectic Lie algebra $(\fs_1, \sigma_1)$ of dimension $2n-2$;
\item
a derivation $D_{\fs_1}$ of $\fs_1$ and an element $Z_1 \in \fs_1$ such that \eqref{der-ext-central} holds and
\item
a symplectic derivation $D_\fs$ of the symplectic double extension $(\fs, \sigma)$ of $(\fs_1, \sigma_1)$ by $D_{\fs_1}$ and $Z_1$.
\end{enumerate}
Then the vector space $\fg =(\mathbb{R} \oplus \fs) \oplus \mathbb{R}$ endowed with the Lie algebra structure given by \eqref{double_extension} and the lcs structure of the first kind given by 
\eqref{lcs_structure_double} is a lcs Lie algebra of the first kind of dimension $2n+2$ with central Lee vector.
\end{corollary}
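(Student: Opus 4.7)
The proof is essentially a two-step composition of constructions that have already been set up in the surrounding text, so my plan is to do no new work beyond recognizing the right alignment of hypotheses.

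First I would use data (i) and (ii) to form the symplectic double extension $(\fs,\sigma)$ of $(\fs_1,\sigma_1)$ by $D_{\fs_1}$ and $Z_1$, following the recipe recalled just before the corollary from \cite{DaMe,MeRe}. Condition \eqref{der-ext-central} is precisely what ensures that the map $(-i_{Z_1}\sigma_1,-D_{\fs_1})$ is a derivation of the central extension $\fh_1=\bR\cext_{D^*_{\fs_1}\sigma_1}\fs_1$, and hence that the semidirect product $\fs=\fh_1\rtimes_{(-i_{Z_1}\sigma_1,-D_{\fs_1})}\bR$ is a well-defined Lie algebra on which the stated 2-form $\sigma$ is closed and non-degenerate. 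The result is a symplectic Lie algebra of dimension $2+(2n-2)=2n$.

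Second, I would apply Theorem \ref{central-lcs} to $(\fs,\sigma)$ together with the symplectic derivation $D_{\fs}$ supplied by data (iii). The theorem produces, on the vector space $\fg=\bR\oplus\fs\oplus\bR$, exactly the Lie bracket \eqref{double_extension} and the pair $(\omega,\eta)$ of \eqref{lcs_structure_double}, and guarantees that $(\omega,\eta)$ is a lcs structure of the first kind whose Lee vector $V=(1,0,0)$ lies in the center of $\fg$ (its anti-Lee vector being $U=(0,0,1)$). The dimension count is $1+2n+1=2n+2$, as required.

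The main, and really only, point requiring care is to verify that the hypotheses of Theorem \ref{central-lcs} are met by the output of the first step, i.e.\ that $(\fs,\sigma)$ is a genuine $2n$-dimensional symplectic Lie algebra and that $D_{\fs}$ is a symplectic derivation of it in the precise sense used in \thmref{central-lcs}. Both are true by construction and by assumption (iii), respectively. There is no genuine obstacle; the corollary is simply the composite of the symplectic double extension $(\fs_1,\sigma_1)\rightsquigarrow(\fs,\sigma)$ of \cite{DaMe,MeRe} with the lcs extension $(\fs,\sigma)\rightsquigarrow\fg$ of Definition \ref{lcs-extension-algebraic}.
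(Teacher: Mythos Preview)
Your proposal is correct and follows exactly the approach the paper intends: the corollary is stated immediately after the construction of the symplectic double extension $(\fs,\sigma)$ and Theorem \ref{central-lcs}, and is meant to be the direct composition of these two steps, with no new ingredient beyond checking that hypothesis (iii) feeds into Theorem \ref{central-lcs}.
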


\subsection{Lcs structures on nilpotent Lie algebras}

In this section we focus on lcs structures on nilpotent Lie algebras. Our first result is:

\begin{theorem}\label{first-descrip-lcs-nilpotent}
Let $\fs$ be a nilpotent Lie algebra endowed with a symplectic structure and let $\fg$ be a nilpotent Lie algebra endowed with a lcs structure whose Lee form is non-zero.
\begin{enumerate}
\item
The lcs extension of $\fs$ by a symplectic nilpotent derivation is a nilpotent Lie algebra endowed with a lcs structure of the first kind with central Lee vector.
\item
The lcs structure on $\fg$ is of the first kind, the Lee vector is central and $\fg$ is the lcs extension of a symplectic nilpotent Lie algebra by a symplectic nilpotent derivation.
\end{enumerate}
\end{theorem}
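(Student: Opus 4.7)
My plan is to treat the two parts separately. For part (1), I only need to verify nilpotency of the lcs extension $\fg = (\bR\cext_\sigma \fs) \rtimes_D \bR$, since the lcs structure of the first kind and the centrality of the Lee vector are guaranteed by Theorem \ref{central-lcs}. The central extension $\bR\cext_\sigma \fs$ is nilpotent, being a central extension of a nilpotent Lie algebra by $\bR$. From the explicit formula $D(a,X)=(0, D_\fs X)$ of Proposition \ref{central-extension}, we get $D^k(a,X) = (0, D_\fs^k X)$, so $D$ is nilpotent as an endomorphism whenever $D_\fs$ is. Hence $\fg$ is a semidirect product of a nilpotent Lie algebra by a nilpotent derivation, which is standardly nilpotent.

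For part (2), the first step is to show that the lcs structure is of the first kind. Since $\fg$ is nilpotent and $\omega \in \fg^*$ is a non-zero closed $1$-form, Corollary \ref{cor:Dixmier} applied to the non-trivial $1$-dimensional module $W_\omega$ gives $H^2(\fg; W_\omega) = 0$. The lcs equation $d\Phi = \omega\wedge\Phi$ says exactly that $\Phi$ is a $d_\omega$-cocycle, so we obtain $\eta \in \fg^*$ with $\Phi = d\eta - \omega\wedge\eta$; i.e.\ the structure is exact. Since nilpotent Lie algebras are unimodular, Proposition \ref{exact_non_first_kind} upgrades exactness to first-kind.

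The second step is to show the Lee vector $V$ is central. Set $\fh = \ker\omega$ and $\theta = \eta|_{\fh}$; by \thmref{prop:1} we have $\fg = \fh \rtimes_D \bR$ with $(\fh,\theta)$ contact and Reeb vector $V$, and $D=\mathrm{ad}_U|_\fh$. The subalgebra $\fh$ is nilpotent, hence has non-trivial center, and the dichotomy for the center of a contact Lie algebra recalled in Section \ref{sec:lcs_Lie_algebras} forces $\mathcal{Z}(\fh) = \langle V\rangle$. Combined with $[U,V] = 0$ from \eqref{U-V-flat}, this places $V$ in $\mathcal{Z}(\fg)$. Now $\fs = \fh/\langle V\rangle$ inherits a symplectic form $\sigma$ realising $\fh$ as $\bR\cext_\sigma \fs$, and is nilpotent as a quotient of $\fh$. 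By Proposition \ref{central-extension}, $D$ descends to a symplectic derivation $D_\fs$ on $\fs$, which is nilpotent because $D$ is ($\fg$ being nilpotent, $\mathrm{ad}_U$ is nilpotent by Engel's theorem). Theorem \ref{central-lcs} then identifies $\fg$ with the lcs extension of $(\fs,\sigma)$ by $D_\fs$.

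The main obstacle is conceptual rather than computational: everything hinges on two structural inputs specific to the nilpotent setting, namely vanishing of Lichnerowicz cohomology (Dixmier) to obtain the anti-Lee $1$-form $\eta$, and non-triviality of the center combined with the contact-algebra dichotomy to move the Lee vector into $\mathcal{Z}(\fg)$. Both properties can fail for general Lie algebras — Example \ref{solv_ex_not_first_kind} already shows that exactness alone is insufficient in the non-unimodular case — so the proof genuinely uses nilpotency in an essential way.
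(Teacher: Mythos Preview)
Your proof is correct and follows essentially the same route as the paper: for part (1) you verify nilpotency via the nilpotency of the central extension and of the induced derivation $D$, then invoke Theorem \ref{central-lcs}; for part (2) you use Corollary \ref{cor:Dixmier} plus Proposition \ref{exact_non_first_kind} to get first kind, then the contact-algebra center dichotomy together with \eqref{U-V-flat} to get $V$ central, and finally Theorem \ref{central-lcs} to realize $\fg$ as an lcs extension with $D_\fs$ nilpotent via nilpotency of $\mathrm{ad}_U$. The only differences from the paper are expository (you name Engel's theorem and unimodularity explicitly).
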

\begin{proof}
To 1. The central extension of a nilpotent Lie algebra by a $2$-cocycle is again nilpotent. Thus, if $(\fs,\sigma)$ is a symplectic nilpotent Lie algebra, $\fh = \mathbb{R} \cext_\sigma \fs $ is 
a contact nilpotent Lie algebra. Moreover, if $D_\fs$ is a symplectic nilpotent derivation of $\fs$ and $D$ is the corresponding contact derivation of $\fh$ then, from (\ref{Def-D}), it follows that 
$D$ is nilpotent. Therefore, $\fh\rtimes_D\bR$ (that is, the lcs extension of $\fs$ by $D_\fs$), is also a nilpotent Lie algebra. Finally, 
from Theorem \ref{central-lcs}, we have that the Lee vector of the lcs structure on $\fg$ is central.

To 2. From Corollary \ref{cor:Dixmier} and Proposition \ref{exact_non_first_kind}, we deduce that the lcs structure on $\fg$ is of the first kind. Now, let $V$ be the Lee vector of $\fg$. By \thmref{prop:1}, 
the Lie subalgebra $\fh = \ker(\omega)$ is a contact Lie algebra and $V$ is just the Reeb vector $R$ of $\fh$. In addition, since $\fg$ is nilpotent, so is $\fh$ 
and we have that $\mathcal{Z}(\fh) = \langle R \rangle$ (see \cite{Dia,Dia1}). This, together with \eqref{U-V-flat}, implies that $V \in \mathcal{Z}(\fg)$.

Thus, from Theorem \ref{central-lcs}, it follows that $\fg$ is the lcs extension of a symplectic Lie algebra $\fs$ by symplectic derivation $D_\fs$ on $\fs$. In fact, using 
Theorems \ref{prop:1} and \ref{central-lcs}, we deduce that $\fs$ is the quotient Lie algebra $\fh / \langle R \rangle \simeq \fg / \langle U, V\rangle$, with $U$ the anti-Lee vector of 
$\fg$, and $D_\fs$ is the derivation on $\fs$ induced by the operator $\mathrm{ad}_U\colon \fg \to \fg$. So, $\fs$ is nilpotent and, using that the endomorphism $\mathrm{ad}_U$ is nilpotent, we 
conclude that the derivation $D_\fs$ also is nilpotent.
\end{proof}

On the other hand, one may prove that the symplectic double extension of a nilpotent Lie algebra by a nilpotent derivation is a symplectic nilpotent Lie algebra. In fact, in \cite{MeRe} (see also 
\cite{DaMe}), the authors prove that every symplectic nilpotent Lie algebra of dimension $2n$ may be obtained by a sequence of $n-1$ symplectic double extensions by nilpotent derivations from the 
abelian Lie algebra of dimension $2$. Hence, using these facts and Theorem \ref{first-descrip-lcs-nilpotent}, we deduce the following result
\begin{theorem}\label{description-Lie-alg-nil}
\begin{enumerate}
\item
Under the same hypotheses as in Corollary \ref{sym-double-lcs-ext} if, in addition, the derivations $D_{\fs_1}$ and $D_\fs$ on the symplectic nilpotent Lie algebras $\fs_1$ and $\fs$, respectively, 
are nilpotent then the lcs Lie algebra $(\bR\oplus \fs)\oplus\bR$ is also nilpotent.
\item
Every lcs nilpotent Lie algebra of dimension $2n+2$ with non-zero Lee $1$-form may be obtained as the lcs extension of a $2n$-dimensional symplectic nilpotent Lie algebra $\fs$ by a 
symplectic nilpotent derivation and, in turn, the symplectic nilpotent Lie algebra $\fs$ may obtained by a sequence of $n-1$ symplectic double extensions by nilpotent derivations from the abelian Lie 
algebra of dimension $2$.
\end{enumerate}
\end{theorem}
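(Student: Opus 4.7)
My plan is to prove the two parts separately, with part (2) being mostly an assembly of Theorem~\ref{first-descrip-lcs-nilpotent}(2) and the Medina--Revoy structure theorem for symplectic nilpotent Lie algebras, and part (1) reducing to a nilpotency-preservation check through the three layers of the construction.

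For part (1), I would unwind the construction carefully. Writing $\fh_1=\bR\cext_{D^*_{\fs_1}\sigma_1}\fs_1$ and $\fs=\fh_1\rtimes_{(-i_{Z_1}\sigma_1,-D_{\fs_1})}\bR$ as in the definition of symplectic double extension, I need to check that nilpotency propagates at each step. The central extension $\fh_1$ is nilpotent because $\fs_1$ is, and a central extension of a nilpotent Lie algebra by $\bR$ is again nilpotent (the lower central series of $\fh_1$ sits in the lower central series of $\fs_1$ shifted by one). For the semidirect product $\fs=\fh_1\rtimes_{D_1}\bR$, I would use the standard fact that if $\fh$ is an $r$-step nilpotent Lie algebra and $D$ is a nilpotent derivation of $\fh$ with, say, $D^s=0$, then $\fh\rtimes_D\bR$ is nilpotent of step at most $r+s$; here the derivation $(-i_{Z_1}\sigma_1,-D_{\fs_1})$ is nilpotent because $D_{\fs_1}$ is (the new component $-i_{Z_1}\sigma_1$ is valued in the central $\bR$-factor of $\fh_1$, hence is eventually killed). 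This produces $\fs$ nilpotent. Next, Theorem~\ref{first-descrip-lcs-nilpotent}(1) already gives that the lcs extension $(\bR\cext_\sigma\fs)\rtimes_D\bR$ of a symplectic nilpotent $\fs$ by a symplectic nilpotent derivation $D_\fs$ is nilpotent, which finishes part~(1).

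For part~(2), let $(\fg,\omega,\eta)$ be an lcs nilpotent Lie algebra of dimension $2n+2$ with $\omega\neq 0$. By Theorem~\ref{first-descrip-lcs-nilpotent}(2) the structure is automatically of the first kind, the Lee vector is central, and there exists a symplectic nilpotent Lie algebra $(\fs,\sigma)$ of dimension $2n$ and a symplectic nilpotent derivation $D_\fs$ of $\fs$ such that $\fg$ is the lcs extension of $(\fs,\sigma)$ by $D_\fs$ in the sense of Definition~\ref{lcs-extension-algebraic}. To conclude, I would invoke the theorem of Medina and Revoy (see \cite{MeRe,DaMe}) which asserts that every symplectic nilpotent Lie algebra of dimension $2n$ is obtained from the abelian $\bR^2$ via a sequence of $n-1$ symplectic double extensions by nilpotent derivations. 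Composing the two descriptions yields the required statement.

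The main obstacle is the nilpotency of $\fs=\fh_1\rtimes_{(-i_{Z_1}\sigma_1,-D_{\fs_1})}\bR$ in part~(1): one must see that the extra central component $-i_{Z_1}\sigma_1$ in the derivation of $\fh_1$ does not destroy nilpotency. The clean way is to iterate the bracket $[(0,0,1),\,\cdot\,]$ on $\fh_1\oplus\bR$ and observe that, modulo the central $\bR$-factor of $\fh_1$, this iteration coincides with the action of $-D_{\fs_1}$ on $\fs_1$, which is nilpotent by hypothesis; the residual terms along the central $\bR$ are themselves central and thus vanish after one further bracket. Once this is in place, the remaining claims are routine, and assembling them with Theorem~\ref{first-descrip-lcs-nilpotent} and Medina--Revoy gives the theorem.
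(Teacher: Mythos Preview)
Your proposal is correct and follows essentially the same route as the paper: for part~(1) you verify nilpotency is preserved through the double extension and then invoke Theorem~\ref{first-descrip-lcs-nilpotent}(1), and for part~(2) you combine Theorem~\ref{first-descrip-lcs-nilpotent}(2) with the Medina--Revoy structure theorem. The only difference is one of detail: the paper simply asserts, citing \cite{MeRe,DaMe}, that the symplectic double extension of a nilpotent Lie algebra by a nilpotent derivation is again nilpotent, whereas you unwind this explicitly (central extension preserves nilpotency, then the semidirect product by the nilpotent derivation $(-i_{Z_1}\sigma_1,-D_{\fs_1})$ does too); note also that in the statement of part~(1) the algebra $\fs$ is already assumed nilpotent, so your argument for this step, while correct, is in fact redundant under the stated hypotheses.
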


For the next result, we recall the notion of characteristic filtration of a nilpotent Lie algebra (see \cite{BM,Sal}). Let $\fg$ be a nilpotent Lie algebra and let $(\Lambda^\bullet\fg^*,d)$ 
be the associated Chevalley-Eilenberg complex. Consider the following subspaces of $\fg^*$:
\begin{equation}\label{char_fil}
W_1=\ker(d), \qquad W_k=d^{-1}(\wedge^2 W_{k-1}), \ k\geq 2.
\end{equation}
It is immediate to see that $W_{k-1}\subset W_k$, hence $\{W_k\}_k$ is a filtration of $\fg^*$, intrinsically defined. The nilpotency of $\fg$ implies that there exists $m\in\bN$ such that 
$W_m=\fg^*$. 
If $W_m=\fg^*$ but $W_{m-1}\neq\fg^*$, one says that $\fg$ is $m$-step nilpotent. In particular, $1$-step nilpotent Lie algebras are abelian.
\begin{definition}
 Let $\fg$ be a nilpotent Lie algebra. The filtration $\{W_k\}_k$ of $\fg^*$, defined by \eqref{char_fil}, is the \emph{characteristic filtration} of $\fg^*$.
\end{definition}
Let $\fg$ be a nilpotent Lie algebra and let $\{W_k\}_k$ be the characteristic filtration of $\fg^*$. Define
\[
F_1=W_1, \qquad F_k=W_k/W_{k-1}, \ k\geq 2.
\]
Clearly one has $\fg^*=\oplus_kF_k$, but the splitting is not canonical. Nevertheless, the numbers $f_k=\dim(F_k)$ are invariants of $\fg^*$.

\begin{proposition}\label{prop:char_filtration}
Let $\fg$ be an $m$-step nilpotent Lie algebra of dimension $2n$. Assume $\fg$ is endowed with a lcs structure. Then $f_m=1$.
\end{proposition}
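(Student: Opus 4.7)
The plan is to reduce to showing $\dim V_{m-1} \leq 1$, where $V_{m-1}$ is the last non-zero term of the lower central series of $\fg$, and to extract this bound from the non-degeneracy of $\Phi$ together with Corollary \ref{cor:Dixmier}. As a preliminary step, a routine induction using $d\alpha(X,Y) = -\alpha([X,Y])$ shows that $W_k = V_k^\perp$ for every $k$, where $V_0 = \fg$ and $V_k = [\fg, V_{k-1}]$; here one uses that a $2$-form lies in $\wedge^2 W_{k-1}$ iff its contraction with every vector of $V_{k-1}$ vanishes. Dualizing gives $f_m = \dim(\fg^*/W_{m-1}) = \dim V_{m-1}$.

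The key computation is the following. Fix $X \in V_{m-1}$. Since $V_{m-1}$ is the last non-zero term, $X$ is central in $\fg$; and since $V_{m-1}\subseteq V_1 = [\fg,\fg]$ (using $m\geq 2$), closedness of $\omega$ gives $\omega(X) = 0$. From the Cartan identity $\mathcal{L}_X = d\,i_X + i_X\,d$ applied to $\Phi$, together with $\mathcal{L}_X\Phi = 0$ (centrality), $d\Phi = \omega\wedge\Phi$, and $\omega(X) = 0$, I get
\[
d(i_X\Phi) = -i_X(\omega\wedge\Phi) = -\omega(X)\Phi + \omega\wedge i_X\Phi = \omega\wedge i_X\Phi,
\]
so that $i_X\Phi$ is a $1$-cocycle of the Lichnerowicz complex $(C^\bullet(\fg;W_\omega), d_\omega)$ from Example \ref{1_dim_rep}. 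Corollary \ref{cor:Dixmier} then yields $H^1(\fg;W_\omega) = 0$, so $i_X\Phi = d_\omega c = -c\omega$ for some $c\in\bR$, whence $i_X\Phi \in \langle\omega\rangle$.

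The resulting linear map $V_{m-1}\to\langle\omega\rangle$, $X\mapsto i_X\Phi$, is injective by non-degeneracy of $\Phi$, so $\dim V_{m-1}\leq 1$; combined with $V_{m-1}\neq 0$ from the $m$-step hypothesis, this gives $f_m = 1$. The main obstacle, and the step forcing the statement to be about genuinely lcs rather than purely symplectic structures, is the appeal to Corollary \ref{cor:Dixmier}: this requires $W_\omega$ to be a non-trivial $\fg$-module, i.e.\ $\omega\neq 0$, which is the standing convention in Section \ref{sec:lcs_Lie_algebras}.
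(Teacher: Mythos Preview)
Your proof is correct and takes a genuinely different route from the paper's. The paper first invokes Theorem~\ref{first-descrip-lcs-nilpotent} to reduce to a lcs structure of the first kind $(\omega,\eta)$, then passes to the contact ideal $\fh=\ker(\omega)$ with contact form $\theta=\eta|_\fh$; working in an explicit basis adapted to the characteristic filtration, it shows that if $f_m\geq 2$ one can build a nonzero vector $T$ in $\ker(\theta)$ with $\iota_T d\theta=0$, contradicting contactness. Your argument bypasses both the reduction to the first kind and the contact ideal: you observe that for $X\in V_{m-1}$ the element $\iota_X\Phi$ is a $d_\omega$-cocycle, apply $H^1(\fg;W_\omega)=0$ from Corollary~\ref{cor:Dixmier} directly, and read off $\iota_X\Phi\in\langle\omega\rangle$, whence non-degeneracy bounds $\dim V_{m-1}\leq 1$. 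The identification $f_m=\dim V_{m-1}$ via $W_k=V_k^\perp$ is standard and your justification is fine.

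What your approach buys: it is basis-free, shorter, and uses only the $2$-form $\Phi$ rather than the auxiliary $1$-form $\eta$; in particular it does not need Theorem~\ref{first-descrip-lcs-nilpotent} as an input (though both proofs ultimately rest on Dixmier's vanishing---the paper uses it indirectly, via $H^2=0$ and unimodularity, to obtain the first-kind property). What the paper's approach buys: it makes the obstruction geometrically visible as a failure of contactness on $\fh$, tying the result back into the contact/lcs correspondence of Theorem~\ref{prop:1}. Your remark that $\omega\neq 0$ is essential (and implicit in the convention) is well placed; note also that $m\geq 2$ is automatic, since an abelian $\fg$ forces $\omega\wedge\Phi=0$ and hence $\omega=0$.
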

\begin{proof}
By \thmref{first-descrip-lcs-nilpotent}, the lcs structure is of the first kind; we denote it $(\omega,\eta)$. It is sufficient to prove that, if 
$f_m\geq 2$, the Lie algebra $\fg$ can not admit any lcs structure of the first kind. Let 
$\mathcal{B}=\langle e^1_1,\ldots,e^{m-1}_{2n-i},e^m_{2n-i+1},\ldots,e^m_{2n-1},e^m_{2n}\rangle$ be a basis of $\fg^*$ adapted to the filtration $\{W_k\}$. By definition, this means that the collection 
$\{e^k_j\}_j$ spans $F_k$. Assume that $f_m\geq 2$; hence, $i\geq 2$. Set for convenience $y=e^m_{2n-1}$ and $z=e^m_{2n}$. 
Since $d\omega=0$, by making a suitable change of variables in $F_1$, we can assume that $\omega$ is one of the generators of $F_1$, indeed we can take $\omega=e^1_1$.
Let $\fh=\ker(\omega)$; then $\fh$ is a nilpotent Lie algebra, the characteristic filtration of $\fh^*$ is $\widetilde{W}_1=W_1/\langle\omega\rangle$, $\widetilde{W}_k=W_k$, $k\geq 2$ and
$\widetilde{\mathcal{B}}=\langle e^1_2,\ldots,e^{m-1}_{2n-i},e^m_{2n-i+1},\ldots,e^m_{2n-2},y,z\rangle$ is a basis of $\fh^*$ adapted to $\{\widetilde {W}_k\}$. By \thmref{prop:1}, $\fh$ is a contact 
Lie algebra, with contact form $\theta$ given by the restriction of $\eta$ to $\fh$. In particular the rank of $d\theta$ is maximal on $\ker(\theta)$. This means that if $X\in\ker(\theta)$, 
$\imath_Xd\theta\neq 0$. Expand $\theta$ on the basis $\widetilde{\mathcal{B}}$,
\[
\theta=a_2e^1_2+\ldots+\ldots+a_{2n-2}e^m_{2n-2}+by+cz, \quad a_j,b,c\in\bR.
\]
Let $\langle X_2,\ldots,X_{2n-2},Y,Z\rangle$ be the basis of $\fh$ dual to $\widetilde{\mathcal{B}}$. The vector $T=cY-bZ$ clearly belongs to $\ker(\theta)$. 
By hypothesis, $dy,dz\in\Lambda^2\widetilde{W}_{m-1}$, and $T\in\ker(e^k_{\ell}) \ \forall k,\ell$, hence $\imath_Td\theta=0$. Thus $\theta\wedge(d\theta)^{n-1}$ has a kernel, contradicting 
the fact that $(\fh,\theta)$ is a contact Lie algebra.
\end{proof}

To conclude this section, we use the classification of nilpotent Lie algebras in dimension 4 and 6 to determine which of them carry a locally conformal symplectic structure. We refer to Section 
\ref{Notation_Lie_algebras} for the notation.

\begin{proposition}\label{4_dim_nilpotent}
Suppose $(\fg,\omega,\eta)$ is a nilpotent Lie algebra of dimension 4 endowed with a lcs structure. Then $\fg$ is isomorphic to one of the following Lie algebras:
\begin{itemize}
\item $\fg_1=(0,0,0,12)$, $\omega=e^3$, $\eta=e^4$;
\item $\fg_2=(0,0,12,13)$, $\omega=e^2$, $\eta=e^4$.
\end{itemize}
\end{proposition}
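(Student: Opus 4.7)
The plan is to combine the structure theorem for nilpotent lcs Lie algebras (Theorem \ref{first-descrip-lcs-nilpotent}) with a direct classification of the building blocks in dimension four. By part (2) of that theorem, since $\fg$ is nilpotent, the lcs structure is automatically of the first kind; moreover the Lee form must be nonzero, because otherwise the rank condition $\mathrm{rank}(d\eta)=2n-2=2$ of \eqref{algebraic-l.c.s-first-kind} would fail on the abelian Lie algebra $\bR^4$, which is the only other four-dimensional nilpotent candidate. Hence $\fg$ is realised as the lcs extension $(\bR\cext_\sigma\fs)\rtimes_D\bR$ of a $2$-dimensional symplectic nilpotent Lie algebra $\fs$ by a symplectic nilpotent derivation $D_\fs$ of $\fs$.

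The unique $2$-dimensional symplectic nilpotent Lie algebra is $\fs=\bR^2$ with $\sigma=e^1\wedge e^2$, and the symplectic derivations of $(\fs,\sigma)$ are exactly the trace-zero $2\times 2$ real matrices, whose nilpotent elements consist of the zero matrix and a single orbit of rank-one Jordan blocks under base change. Up to a Lie algebra isomorphism of $\fg$ induced by a base change on $\fs$, we may therefore assume either $D_\fs=0$ or $D_\fs(X_1)=0$ and $D_\fs(X_2)=X_1$. In the first case, the central extension is $\mathfrak{heis}_3$ with $[X_1,X_2]=Z$, the induced contact derivation vanishes, and $\fg=\mathfrak{heis}_3\oplus\bR$ has dual structure equations $(0,0,0,12)=\fg_1$. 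In the second case, the induced contact derivation $D$ on $\mathfrak{heis}_3$ satisfies $D(X_2)=X_1$ and vanishes on $X_1,Z$, so $\fg$ has nontrivial brackets $[X_1,X_2]=Z$ and $[U,X_2]=X_1$; relabelling the dual basis $(e^2,e^U,e^1,e^Z)$ as $(f^1,f^2,f^3,f^4)$ yields $df^3=f^{12}$ and $df^4=f^{13}$, hence $\fg\cong(0,0,12,13)=\fg_2$.

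These two possibilities are genuinely distinct, since $\fg_1$ is $2$-step and $\fg_2$ is $3$-step nilpotent. To finish, one verifies by direct inspection that for each $i\in\{1,2\}$ the displayed pair $(\omega,\eta)$ satisfies $d\omega=0$, $\mathrm{rank}(d\eta)=2$, and $\omega\wedge\eta\wedge d\eta\neq 0$, so that \eqref{algebraic-l.c.s-first-kind} indeed defines an lcs structure of the first kind. The main technical point is the normal form of a nilpotent trace-zero $2\times 2$ real matrix under base change (a routine Jordan-normal-form computation), together with the observation that such base changes lift to Lie algebra isomorphisms of the lcs extension by the naturality of the construction in Theorem \ref{central-lcs}.
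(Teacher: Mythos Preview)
Your proof is correct and takes a genuinely different route from the paper's. The paper simply quotes the classification of four-dimensional nilpotent Lie algebras (there are exactly three isomorphism classes: the abelian one, $\fg_1$, and $\fg_2$) and remarks that the abelian algebra carries no lcs structure; the displayed $(\omega,\eta)$ then certify the other two. You instead invoke Theorem~\ref{first-descrip-lcs-nilpotent} and Theorem~\ref{central-lcs} to realise $\fg$ as the lcs extension of the unique two-dimensional symplectic nilpotent Lie algebra $(\bR^2,\sigma)$ by a symplectic nilpotent derivation, and then classify those derivations via the Jordan normal form of nilpotent elements in $\mathfrak{sl}(2,\bR)$. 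Your approach is more self-contained (no external classification is needed), explains \emph{why} exactly two algebras arise, and produces the lcs data $(\omega,\eta)$ intrinsically; the paper's approach is shorter and situates the result directly in the known list of four-dimensional nilpotent algebras.

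One expository point: the clause ``the Lee form must be nonzero, because otherwise the rank condition \ldots\ would fail on the abelian Lie algebra $\bR^4$'' conflates two issues and is not a valid deduction as written. Nonvanishing of $\omega$ is simply the paper's standing convention (on a Lie algebra an exact $1$-form is zero, so the excluded gcs case is precisely $\omega=0$); once $\omega\neq 0$ is granted, Theorem~\ref{first-descrip-lcs-nilpotent} applies and the abelian case is ruled out automatically, since any lcs extension of $\bR^2$ has nontrivial bracket. You can safely delete that sentence.
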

\begin{proof}
There are three isomorphism classes of nilpotent Lie algebras in dimension 4 (see \cite{BM} for instance): the two listed above and the abelian one, which is clearly not lcs.
\end{proof}

\begin{remark}
Both Lie algebras in Proposition \ref{4_dim_nilpotent} are symplectic. $\fg_1$ admits a complex structure and is the Lie algebra of the so called \emph{Kodaira-Thurston} nilmanifold, that was mentioned in the 
introduction. $\fg_2$ does not admit any complex structure (see \cite{Sal}) and is the Lie algebra of the 4-dimensional nilpotent Lie group $G$ considered in Section \ref{Dimension:4}.
\end{remark}

\begin{proposition}\label{6_dim_nilpotent}
Suppose $(\fg,\omega,\eta)$ is a nilpotent Lie algebra of dimension 6 endowed with a lcs structure. Then $\fg$ is isomorphic to one of the Lie algebras contained in 
Table \ref{table:1}.
\end{proposition}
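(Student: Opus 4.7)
The plan is to leverage Theorem~\ref{first-descrip-lcs-nilpotent}: any lcs structure with non-zero Lee form on a nilpotent Lie algebra is automatically of the first kind with central Lee vector, and the Lie algebra is the lcs extension (in the sense of Definition~\ref{lcs-extension-algebraic}) of a symplectic nilpotent Lie algebra by a symplectic nilpotent derivation. Since we are in dimension $6$, the base symplectic Lie algebra $\fs$ must be a $4$-dimensional nilpotent symplectic Lie algebra. Thus $\fs$ is either the abelian Lie algebra $\bR^4$ or one of the two non-abelian Lie algebras $\fg_1=(0,0,0,12)$, $\fg_2=(0,0,12,13)$ appearing in Proposition~\ref{4_dim_nilpotent} (these being the only non-abelian $4$-dimensional nilpotent Lie algebras, both symplectic).

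The core of the argument is then a case-by-case classification of all lcs extensions $(\bR\cext_\sigma\fs)\rtimes_D\bR$ up to isomorphism. For each of the three candidates for $\fs$, I would: (i) list all symplectic forms $\sigma$ on $\fs$ up to the action of symplectic automorphisms; (ii) compute $\mathrm{Der}(\fs)$ and cut out the symplectic derivations via the condition $\sigma(D_\fs X,Y)+\sigma(X,D_\fs Y)=0$; (iii) restrict to nilpotent derivations; (iv) mod out by the natural action of the group of symplectic automorphisms of $(\fs,\sigma)$, since conjugate derivations produce isomorphic lcs extensions; (v) identify each resulting $6$-dimensional nilpotent Lie algebra with one in the standard list (\cite{CFGU} or \cite{BM}) by computing structure constants in a suitable basis and matching the characteristic filtration, the nilpotency step, $b_1$, and the dimensions of the derived series.

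A useful a priori restriction comes from Proposition~\ref{prop:char_filtration}: if $\fg$ is $m$-step nilpotent of dimension $6$ and admits an lcs structure, then $f_m=1$, which immediately discards all $6$-dimensional nilpotent Lie algebras whose characteristic filtration has top piece of dimension $\geq 2$. Crossing this list against \cite{Sal,BM} cuts the candidates drastically and is an efficient sanity check on the extensions produced in the case analysis.

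The main obstacle is the bookkeeping in step (iv): many distinct pairs $(\sigma,D_\fs)$ yield isomorphic lcs Lie algebras, and one must be careful that the isomorphisms used are compatible with the splitting $\fg=\bR\oplus\fs\oplus\bR$ or, more generally, that the underlying Lie algebra isomorphism is detected. The cleanest way I see is to compute, for each putative extension, an invariant list (Betti numbers, lower central series dimensions, characteristic filtration, presence/absence of a complex structure) and match to a unique entry in the classification of $6$-dimensional nilpotent Lie algebras; this bypasses the need to write explicit isomorphisms and allows Table~\ref{table:1} to be assembled directly.
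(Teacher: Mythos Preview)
Your approach is correct in principle but takes a genuinely different route from the paper. The paper argues by \emph{elimination}: it starts from the known classification of $6$-dimensional nilpotent Lie algebras, uses Proposition~\ref{prop:char_filtration} to discard those with $f_m\geq 2$, and then for each surviving algebra not in Table~\ref{table:1} shows directly (via Theorem~\ref{prop:1}) that no closed $\omega\in\fg^*$ yields a contact ideal $\ker\omega$. A sample computation for $L_{6,13}$ is worked out: one writes the generic closed $1$-form, computes the induced Chevalley--Eilenberg differential on $\fh=\ker\omega$, and checks in each parameter regime that $\fh$ is not contact. The existence half is handled by the explicit $(\omega,\eta)$ listed in the table.

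Your approach is \emph{constructive}: you build every possible lcs extension $(\bR\cext_\sigma\fs)\rtimes_D\bR$ from a $4$-dimensional symplectic nilpotent base and then identify the output in the $6$-dimensional list. This is heavier on bookkeeping (classifying symplectic forms on $\fs$ up to $\Aut(\fs)$, then symplectic nilpotent derivations up to symplectic automorphisms, and finally watching for coincidences across the three families $\fs=\bR^4,\fg_1,\fg_2$), but it has the virtue of being self-contained and of producing the lcs data $(\omega,\eta)$ intrinsically rather than by inspection. The paper's method is shorter precisely because it leans on the existing $6$-dimensional classification and reduces each case to a small contact-form computation; yours would be preferable if that classification were not available or if one wanted to understand the moduli of lcs structures, not just existence. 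One minor correction: in your step~(i) the relevant equivalence is ``symplectic forms up to $\Aut(\fs)$'' (Lie algebra automorphisms), not ``up to symplectic automorphisms''.
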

The third and fourth column in Table \ref{table:1} contain two labellings of nilpotent Lie algebras, according to \cite{BM} and \cite{CFGU} respectively. We also 
compare lcs structures with other structures on 6-dimensional nilpotent Lie algebras, namely symplectic and complex structures.

\begin{table}[h!]
\centering
\caption{Locally conformal symplectic nilpotent Lie algebras in dimension 6}\label{table:1}
{\tabulinesep=1.2mm
\begin{tabu}{lcccccccc}
\toprule[1.5pt]
Lie algebra & $\omega$ & $\eta$ & \cite{BM} & \cite{CFGU} & Symplectic & Complex \\
\specialrule{1pt}{0pt}{0pt}
$(0,0,0,0,0,12+34)$ & $e^5$ & $e^6$ & $L_{5,1}\oplus A_1$ & $\fh_3$ & $\times$ & \checkmark\\
\specialrule{1pt}{0pt}{0pt}
$(0,0,0,0,12,15+34)$ & $e^2$ & $e^6$ & $L_{6,3}$ & $\fh_{20}$ & $\times$ & $\times$\\
\specialrule{1pt}{0pt}{0pt}
$(0,0,0,0,12,15+23)$ & $e^4$ & $e^6$ & $L_{5,3}\oplus A_1$ & $\fh_9$ & \checkmark & \checkmark\\
\specialrule{1pt}{0pt}{0pt}
$(0,0,0,12,13,15+24)$ & $e^3$ & $e^6$ & $L_{6,7}$ & $\fh_{18}$ & $\times$ & $\times$\\
\specialrule{1pt}{0pt}{0pt}
$(0,0,0,12,13,24+35)$ & $e^1$ &$e^6$ & $L_{6,8}^+$ & $\fh_{19}^-$ & $\times$ & \checkmark\\
\specialrule{1pt}{0pt}{0pt}
$(0,0,0,12,13,24-35)$ & $e^1$ & $e^6$ & $L_{6,8}^-$ & $\fh_{19}^+$ & $\times$ & $\times$ \\
\specialrule{1pt}{0pt}{0pt}
$(0,0,0,12,14,15+24)$ & $e^3$ & $e^6$ & $L_{5,6}\oplus A_1$ & $\fh_{22}$ & \checkmark & $\times$\\
\specialrule{1pt}{0pt}{0pt}
$(0,0,0,12,14,15+23+24)$ & $e^3$ & $e^6$ & $L_{6,14}$ & $\fh_{24}$ & \checkmark & $\times$\\
\specialrule{1pt}{0pt}{0pt}
$(0,0,0,12,14+23,15-34)$ & $e^2$ & $e^6$ & $L_{6,15}$ & $\fh_{27}$ & \checkmark & $\times$\\
\specialrule{1pt}{0pt}{0pt}
$(0,0,12,13,14,25-34)$ & $e^1$ & $e^6$ & $L_{6,20}$ & $\fh_{31}$ & $\times$ & $\times$ \\
\specialrule{1pt}{0pt}{0pt}
$(0,0,12,13,14+23,25-34)$ & $e^1$ & $e^6$ & $L_{6,22}$ & $\fh_{32}$ & $\times$ & $\times$ \\
\bottomrule[1pt]
\end{tabu}}
\end{table}

\begin{proof}
By Proposition \ref{prop:char_filtration}, we can restrict to those Lie algebras whose characteristic filtration ends with a one-dimensional space. 
It is then enough to show that if $\fg$ is one of the 6-dimensional nilpotent Lie algebras which does not appear in the above list, there exists no contact ideal of $\fg$ and
then apply \thmref{prop:1}. 

As an example, consider the Lie algebra $L_{6,13}=(0,0,0,12,14,15+23)$ (in the notation of \cite{BM}). Then $L_{6,13}$ has a basis $\cB=\langle e^1,\ldots,e^6\rangle$ whose closed 
elements are $e^1$, $e^2$ and $e^3$. The generic closed element has the form $\omega=a_1e^1+a_2e^2+a_3e^3$, hence $\fh=\ker(\omega)$ has a basis 
\[
\langle a_2e_1-a_1e_2, a_3e_1-a_1e_3,e_4,e_5,e_6\rangle\eqcolon\langle f_1,f_2,f_3,f_4,f_5\rangle, 
\]
where $\langle e_1,\ldots,e_6\rangle$ is the basis dual to $\cB$. A computation shows that, with respect to the basis $\langle f_1,f_2,f_3,f_4,f_5\rangle$ basis and its dual basis, 
the Chevalley-Eilenberg differential in $\fh^*$ is given by
\[
 df^1=0=df^2, \quad df^3=a_1a_3 f^{12}, \quad df^4=a_2 f^{13}+a_3f^{23} \quad \mathrm{and} \quad df^5=a_1a_2 f^{12}+a_2f^{14}+a_3f^{24}.
\]
We proceed case by case:
\begin{itemize}
 \item if $a_1=0$ or $a_3=0$, a change of basis shows that $\fh$ is isomorphic to $(0,0,0,12,14)$, which is not a contact algebra;
 \item if $a_2=0$, a change of basis shows that $\fh$ is isomorphic to $(0,0,12,13,14)$, which is not a contact algebra;
 \item if $a_1a_2a_3 \neq 0$, a change of basis shows that $\fh$ is isomorphic to $(0,0,12,13,14)$ as well.
\end{itemize}
We conclude that $L_{6,13}$ has no suitable contact ideal and, by \thmref{prop:1}, it is not locally conformal symplectic.
\end{proof}

According to \cite{Sal}, the five nilpotent Lie algebras in Table \ref{table:1} that admit neither a symplectic nor a complex structure are the only such examples in dimension 6. 
Since all of them admit a lcs structure, we obtain the following result:

\begin{corollary}
A 6-dimensional nilpotent Lie algebra carries at least one among symplectic, complex and locally conformal symplectic structures.
\end{corollary}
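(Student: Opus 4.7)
The plan is to combine three known classifications of six-dimensional nilpotent Lie algebras. First, I would recall that the isomorphism classes of six-dimensional nilpotent Lie algebras form a finite, fully classified list, given, e.g., in \cite{BM,CFGU}. Second, I would invoke the corresponding known classifications of which of these algebras admit a symplectic structure (see, e.g., \cite{BM}) and which admit a complex structure (Salamon, \cite{Sal}). Taking the union $\mathcal{S}\cup\mathcal{C}$ of the symplectic and complex classes, the result of Salamon cited right before the corollary pins down the complement: exactly five isomorphism classes of six-dimensional nilpotent Lie algebras lie outside $\mathcal{S}\cup\mathcal{C}$.

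Once these five algebras are identified, the proof reduces to a finite check: one must verify that each of them appears as a row of Table \ref{table:1} and thus carries a locally conformal symplectic structure. Concretely, I would match the labels in \cite{BM} (or \cite{CFGU}) of the five exceptional algebras with the entries of Table \ref{table:1}, and for each row confirm that the prescribed $(\omega,\eta)\in\fg^*\oplus\fg^*$ satisfies the conditions of Proposition \ref{lcs_1_kind}: namely $d\omega=0$, $\mathrm{rank}(d\eta)=4$, and $\omega\wedge\eta\wedge(d\eta)^{2}\neq 0$. Since these are just Chevalley--Eilenberg computations in a fixed basis, the checks are mechanical.

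The only real obstacle is the reconciliation of conventions across \cite{BM}, \cite{CFGU} and \cite{Sal}: the same Lie algebra may appear under several labels and with differently normalized structure constants, so one must be careful to track the identifications (already laid out in columns 4 and 5 of Table \ref{table:1}) to ensure that every one of the five exceptional algebras really is represented in the lcs table, and that none of the five is accidentally double-counted or omitted. Once this bookkeeping is done, the corollary is immediate: if $\fg$ is a six-dimensional nilpotent Lie algebra, then either $\fg\in\mathcal{S}$, or $\fg\in\mathcal{C}$, or $\fg$ is one of the five exceptional algebras, which are covered by Table \ref{table:1}; in all cases $\fg$ admits at least one structure among symplectic, complex, and locally conformal symplectic.
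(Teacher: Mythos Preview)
Your proposal is correct and follows essentially the same approach as the paper: the paper simply observes (in the sentence immediately preceding the corollary) that, by Salamon's classification, exactly five six-dimensional nilpotent Lie algebras admit neither a symplectic nor a complex structure, and that all five appear in Table~\ref{table:1} with explicit lcs data. Your write-up is somewhat more detailed in spelling out the finite verification and the bookkeeping of labels across references, but the logical structure is identical.
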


%
%

\section{Locally conformal symplectic Lie groups}\label{sec:lcs_Lie_groups}

First of all, we will introduce, in a natural way, the notion of a locally conformal symplectic (lcs) Lie group.
\begin{definition}
A Lie group $G$ of dimension $2n$ ($n \geq 2$) is said to be a \emph{locally conformal symplectic (lcs) Lie group} if it admits a closed left-invariant $1$-form $\lvec{\omega}$ and a non-degenerate left-invariant $2$-form 
$\lvec{\Phi}$ such that
\[
d\lvec{\Phi} = \lvec{\omega} \wedge \lvec{\Phi}.
\]
The lcs structure is said to be \emph{of the first kind} if $G$ admits a left-invariant $1$-form $\lvec{\eta}$ such that
$\Phi = d\lvec{\eta} + \lvec{\eta} \wedge \lvec{\omega}$ and $\lvec{\omega} \wedge \lvec{\eta} \wedge (d\lvec{\eta})^{n-1}$ is a volume form on $G$.
\end{definition}
If $\fe$ is the identity element of a Lie group $G$ and $\fg = T_{\fe}G$ is the Lie algebra of $G$ then it is clear that $G$ is a lcs Lie group (resp.\ lcs Lie group of the first kind) if and only if 
$\fg$ is a lcs Lie algebra (resp. lcs Lie algebra of the first kind). In fact, if $(\lvec{\omega}, \lvec{\Phi})$ is a lcs structure on $G$ then $(\omega, \Phi)$ is a lcs structure on $\fg$, with
\[
\omega = \lvec{\omega}(\fe), \; \; \Phi = \lvec{\Phi}(\fe).
\]
In a similar way, if $(\lvec{\omega}, \lvec{\eta})$ is a lcs structure of the first kind on $G$ then $(\omega, \eta)$ is a lcs structure of the first kind on $\fg$, with
\[
\omega = \lvec{\omega}(\fe), \; \; \eta = \lvec{\eta}(\fe).
\]

Next, we will restrict our attention to lcs Lie groups of the first kind. More precisely,
we will discuss the relation between lcs Lie groups of the first kind and contact (resp.\ symplectic) Lie groups. 

\subsection{Relation with contact Lie groups}\label{relacion-contact}

It is well-known that a Lie group $H$ of dimension $2n-1$ is a contact Lie group if it admits a contact left-invariant $1$-form $\lvec{\eta}$, that is, $\lvec{\eta} \wedge (d\lvec{\eta})^{n-1}$ is a 
volume form on $H$ (see, for instance, \cite{Dia,Dia1}).

As in the lcs case, a Lie group $H$ with Lie algebra $\fh$ is a contact Lie group if and only if $\fh$ is a contact Lie algebra. In fact, if $\lvec{\eta}$ is a left-invariant contact $1$-form on $H$ 
then $\eta = \lvec{\eta}(\fe)$ is a contact structure on $\fh$.

Next, using contact Lie groups, we will present the typical example of a lcs Lie group of the first kind.

\begin{example}\label{lcs-Lie-group-first-kind}
Let $(H, \lvec{\eta})$ be a contact Lie group of dimension $2n-1$ and $\phi\colon \mathbb{R} \to \Aut(H)$ the flow of a contact multiplicative vector field $\cM$ on $H$. In other words,
\[
\phi\colon\mathbb{R} \to \Aut(H)
\]
is a representation of the abelian Lie group $\mathbb{R}$ on $H$ and
\begin{equation}\label{contact-flow}
\phi_t^*(\lvec{\eta}) = \lvec{\eta}, \; \; \mbox{ for } t \in \mathbb{R}.
\end{equation}
Then, we can consider the semidirect product $G = H \rtimes_\phi\mathbb{R}$ whose Lie algebra $\fg$ is the semidirect product $\fg = \fh \rtimes_{D} \mathbb{R}$, 
with $D\colon\fh\to\fh$ the derivation in $\fh$ induced by the representation $\phi$ (see Section \ref{multiplicative}). Denote by $\eta$ the contact structure on the Lie algebra $\fh$, that is, $\eta = \lvec{\eta}(\fe)$, 
${\fe}$ being the identity element in $H$. Then, from \eqref{contact-flow}, it follows that ${\mathcal L}_{\mathcal M}\lvec{\eta} = 0$, which implies that
\[
D^*\eta = 0,
\]
that is, $D$ is a contact derivation. Thus, if $\omega = (0, 1) \in \fh^* \oplus\mathbb{R} = \fg^*$ then, using \thmref{prop:1}, we deduce that the couple $(\omega, \eta)$ is a lcs structure of the first kind on $\fg$. 
Therefore, $(\lvec{\omega}, \lvec{\eta})$ is a lcs structure of the first kind on $G = H \rtimes_\phi\mathbb{R}$. 
\end{example}
\begin{remark}
Note that if $\lvec{R}$ is the Reeb vector field of $H$, with $R\in \fh$, then $(\lvec{R}, 0)$ is just the Lee vector field of $G = H \rtimes_\phi\mathbb{R}$.
\end{remark}
The previous example is the model of a connected simply connected lcs Lie group of the first kind. More precisely, two lcs Lie groups of the first kind $(G, \omega, \eta)$ and $(G',\omega',\eta')$ are isomorphic if 
there exists a Lie group isomorphism $\Psi\colon G \to G'$ such that
\[
\Psi^*\omega' = \omega \; \; \; \mbox{ and } \; \; \; \Psi^*\eta' = \eta.
\]
Then, one may prove the following result
\begin{theorem}\label{Universal-covering-lcs}
Let $G$ be a connected simply connected lcs Lie group of the first kind. If $\fg$ is the Lie algebra of $G$ then $\fg$ is isomorphic to a semidirect product of a contact Lie algebra $\fh$ with $\mathbb{R}$. Moreover, 
if $H$ is a connected simply connected Lie group with Lie algebra $\fh$, then $G$ is isomorphic to a semidirect product $H \rtimes_\phi\mathbb{R}$, where $\phi\colon\mathbb{R} \to\Aut (H)$ is a contact representation 
of the abelian Lie group $\mathbb{R}$ on $H$ and the lcs structure on $H \rtimes_\phi\mathbb{R}$ is given as in 
Example \ref{lcs-Lie-group-first-kind}.
\end{theorem}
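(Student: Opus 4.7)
The plan is to reduce everything to the Lie algebra level, apply Theorem \ref{prop:1}, and then integrate back to the Lie group level using the hypothesis that $G$ is connected and simply connected.

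First, I would pass from $G$ to $\fg$. Setting $\omega = \lvec{\omega}(\fe)$ and $\eta = \lvec{\eta}(\fe)$, the conditions $d\lvec{\omega} = 0$ and $\lvec{\omega} \wedge \lvec{\eta} \wedge (d\lvec{\eta})^{n-1}$ being a volume form translate to the algebraic conditions \eqref{algebraic-l.c.s-first-kind} on $\fg$. Hence $(\fg, \omega, \eta)$ is a lcs Lie algebra of the first kind, and by Theorem \ref{prop:1} we obtain $\fh = \ker(\omega)$, a contact Lie algebra with contact form $\theta = \eta|_\fh$, together with a contact derivation $D = \mathrm{ad}_U\colon \fh \to \fh$ (where $U$ is the anti-Lee vector of $\fg$) such that $\fg \cong \fh \rtimes_D \mathbb{R}$.

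Next I would integrate to the group level. Let $H$ be the (unique) connected simply connected Lie group with Lie algebra $\fh$. By the discussion in Section \ref{multiplicative}, there exists a unique multiplicative vector field $\cM$ on $H$ with $\lvec{DX} = [\lvec{X}, \cM]$ for all $X \in \fh$, and its flow defines a representation $\phi\colon \mathbb{R} \to \Aut(H)$, $t \mapsto \phi_t$. I then form $G' = H \rtimes_\phi \mathbb{R}$, whose Lie algebra is exactly $\fh \rtimes_D \mathbb{R} \cong \fg$. Since both $G$ and $G'$ are connected and simply connected with isomorphic Lie algebras, there is a Lie group isomorphism $\Psi\colon G \to G'$ whose differential at $\fe$ realizes the chosen Lie algebra isomorphism.

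The next point to check is that $\phi$ is a \emph{contact} representation of $\mathbb{R}$ on $(H, \lvec{\theta})$, i.e., $\phi_t^*(\lvec{\theta}) = \lvec{\theta}$ for all $t \in \mathbb{R}$. At the infinitesimal level this is the statement $\cL_{\cM}\lvec{\theta} = 0$, and it is equivalent to the identity $D^*\theta = 0$ pulled over to $H$ by left translation; since $D$ is a contact derivation, this identity holds at the origin, and left-invariance plus the fact that $\cM$ is multiplicative propagate it to all of $H$. Integrating along the flow of $\cM$ then yields $\phi_t^*(\lvec{\theta}) = \lvec{\theta}$. Consequently, Example \ref{lcs-Lie-group-first-kind} endows $G' = H \rtimes_\phi \mathbb{R}$ with a canonical lcs structure of the first kind whose value at the identity is precisely $(\omega,\eta) \in \fg^*$. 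Because $\Psi$ is a Lie group isomorphism inducing the chosen identification of Lie algebras, $\Psi$ pulls back the left-invariant forms of $G'$ to those of $G$, and therefore $\Psi$ is an isomorphism of lcs Lie groups of the first kind.

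The main obstacle, and really the only non-formal point, is the passage from the infinitesimal contact condition $D^*\theta = 0$ to the global invariance $\phi_t^*\lvec{\theta} = \lvec{\theta}$ on $H$; this is where one needs the construction of $\cM$ from $D$ described in Section \ref{multiplicative} together with simple-connectedness of $H$ to ensure that $\phi_t$ is a well-defined global automorphism of $H$ for every $t\in\mathbb{R}$. Once this is in place, the remainder of the argument is a bookkeeping check that the lcs data on both sides coincide.
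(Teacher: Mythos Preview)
Your proposal is correct and follows essentially the same approach as the paper: apply Theorem \ref{prop:1} to obtain the decomposition $\fg \cong \fh \rtimes_D \mathbb{R}$ with $D$ a contact derivation, integrate $D$ to a representation $\phi\colon \mathbb{R}\to\Aut(H)$ via Section \ref{multiplicative}, verify that $\cL_{\cM}\lvec{\theta}=0$ (hence $\phi$ is a contact representation), and conclude by the uniqueness of simply connected Lie groups with a given Lie algebra. Your write-up is in fact slightly more explicit than the paper's in spelling out the final identification of the lcs structures under the isomorphism $\Psi$.
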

\begin{proof}
Let $\fg$ be the Lie algebra of $G$ and $(\omega, \eta)$ the lcs structure of the first kind on $\fg$. Then, using \thmref{prop:1}, we deduce the following facts:
\begin{itemize}
\item $\eta$ induces a contact structure on the ideal $\fh=\ker(\omega)\subset\fg$;
\item there exists a contact derivation $D\colon\fh \to \fh$ such that $\fg$ is isomorphic to the semidirect product $\fh\rtimes_{D} \mathbb{R}$ and
\item under the previous isomorphism $\omega$ is the $1$-form $(0, 1)$ in $\fh^* \oplus\mathbb{R}$.
\end{itemize}
Now, let $H$ be a connected simply connected Lie group with Lie algebra $\fh$. Then, $H$ is a contact Lie group. In addition, the derivation $D\colon\fh\to\fh$ induces a representation
\[
\phi\colon \mathbb{R} \to\Aut(H).
\]
In fact, if $\cM$ is the multiplicative vector field on $H$ whose flow is $\phi$ then, using \eqref{der-induced} and the fact that $D$ is a contact derivation, we deduce that
\[
{\mathcal L}_{\cM}\lvec{\eta} = 0,
\]
which implies that $\phi$ is a contact representation, that is, 
\[
\phi_t^*\lvec{\eta} = \lvec{\eta}, \; \; \; \mbox{ for } t \in \mathbb{R}.
\]
On the other hand, it is clear that the Lie algebra of the connected simply connected Lie group $H \rtimes_\phi\mathbb{R}$ is $\fh\rtimes_{D} \mathbb{R} \simeq \fg$. This ends the proof of the result.
\end{proof}
\begin{remark}
Let $\Gamma$ be a lattice in the contact Lie group $H$ and let $r\bZ$ be an integer lattice of $\mathbb{R}$ such that the restriction to $r\bZ$ of 
the representation $\phi$ (in Theorem \ref{Universal-covering-lcs}) takes values in $\Aut(\Gamma)$. Then $\Gamma \rtimes_\phi(r\bZ)$ is a lattice in $H \rtimes_\phi\mathbb{R}$ 
and $M = (\Gamma \rtimes_\phi(r\bZ)) \backslash (H \rtimes_\phi\mathbb{R})$ is a compact manifold. Furthermore, since the lcs structure of the first kind on $H \rtimes_\phi\mathbb{R}$ is 
left-invariant, it induces a lcs structure of the first kind on $M$. Thus, $M$ is a compact lcs manifold of the first kind.
This construction can also be described as a mapping torus. More precisely, notice that the compact quotient $N\coloneq\Gamma\backslash H$ carries a contact structure; moreover, $\bar{\phi}\coloneq\phi(r)$ gives a 
strict contactomorphism $\bar{\phi}\colon H\to H$ which preserves $\Gamma$, hence descends to a strict contactomorphism of $N$, denoted again $\bar{\phi}$. We can form its mapping torus 
\[
N_{\bar{\phi}}=N\times_{(\bar{\phi},r)}\bR.
\]
The map $M\to N_{\bar{\phi}}$ given by $[(h,t)]\mapsto [([h],t)]$ is a diffeomorphism.
\end{remark}

\subsection{Relation with symplectic Lie groups}\label{rel_symplectic}

A Lie group $S$ of dimension $2n$ is said to be a symplectic Lie group if it admits a left-invariant $2$-form $\lvec{\sigma}$ which is closed and non-degenerate (see, for instance, \cite{BaCo,DaMe,LiMe}). 

Let $S$ be a Lie group with identity $\fe$ and let $\fs=T_{\fe}S$ be its Lie algebra; it is clear that $S$ is a symplectic Lie group if and only if $\fs$ is a symplectic Lie algebra. 
In fact, if $\lvec{\sigma}$ is a left-invariant symplectic structure on $S$ then $\sigma = \lvec{\sigma}(\fe) \in \Lambda^2{\fs}^*$ is a symplectic structure on $\fs$.

Next, we will discuss the relation between symplectic Lie groups and a particular class of lcs Lie groups of the first kind.

\subsubsection{Lcs Lie groups of the first kind with bi-invariant Lee vector field}

 First of all, we present the following example.
\begin{example}
\label{symplectic-lcs-first-kind}
Let $S$ be a Lie group of dimension $2n$ with Lie algebra $\fs$. Moreover, suppose given:
\begin{itemize}
\item a $2$-cocycle $\varphi\colon S \times S \to \mathbb{R}$ on $S$ with values in $\mathbb{R}$ such that the corresponding $2$-cocycle $\sigma\colon \fs\times\fs \to \mathbb{R}$ on $\fs$ with values in $\mathbb{R}$ given by 
(\ref{sigma}) is non-degenerate;
\item a representation $\phi\colon\mathbb{R} \to\Aut(S)$ of the abelian Lie group $\mathbb{R}$ on $S$ such that
\[
\phi_u^*\lvec{\sigma} = \lvec{\sigma}, \; \; \mbox{ for every } u \in \mathbb{R}.
\]
\end{itemize}
Note that the first condition implies that $(\fs,\sigma)$ is a symplectic Lie algebra, hence $S$ is a symplectic Lie group with symplectic form $\lvec{\sigma}$.

Therefore, the central extension $\bR\cext_\sigma\fs$ is a contact Lie algebra with contact structure $\eta = (1,0) \in\bR\oplus\fs^*$ and central Reeb vector 
$R = (1,0) \in \bR\cext_\sigma\fs$. This implies that the central extension $\bR\cext_{\varphi} S$ is a contact Lie group with contact structure $\lvec{\eta}$ and 
bi-invariant Reeb vector field $\lvec{R}$. Note that the left-invariant vector field $\lvec{R}$ is bi-invariant since $R$ belongs to the center of $\bR\cext_{\sigma} \fs$.

Now, denote by $D_{\fs}\colon\fs \to \fs$ the symplectic derivation induced by $\phi$. Then, from Proposition \ref{central-extension}, it follows that the linear map 
$D\colon\bR\cext_{\sigma}\fs \to\bR\cext_{\sigma}\fs$ given by
\begin{equation} \label{D-derivation}
D(u,X) = (0,D_{\fs}X), \; \; \; \mbox{ for } u \in \mathbb{R} \mbox{ and } X \in \fs
\end{equation}
is a contact derivation. Thus, it induces a contact representation $\tphi\colon\mathbb{R} \to\Aut(\bR\cext_{\varphi} S)$ and the semidirect product 
$G = (\bR\cext_{\varphi} S) \rtimes_{\tphi} \mathbb{R}$ is a lcs Lie group of the first kind with bi-invariant Lee vector field $(\lvec{R}, 0)$ (see Example \ref{lcs-Lie-group-first-kind}).

We remark the following facts:
\begin{itemize}
\item $\fg$, the Lie algebra of $G$, is the lcs extension $(\bR\cext_{\sigma}\fs) \rtimes_{D}\mathbb{R}$ of the symplectic Lie algebra $(\fs, \sigma)$ by the derivation $D_{\fs}$ (compare with Remark \ref{lcs_extension});
\item the element $(R, 0)\in\fg$ is central; this implies that the vector field $(\lvec{R}, 0)$ on $G$ is bi-invariant.
\end{itemize}

Next, we will present a more explicit description of the contact representation $\tphi\colon\mathbb{R} \to \Aut(\mathbb{R} \cext_{\varphi} S)$ in terms of the symplectic representation $\phi\colon\mathbb{R} \to \Aut(S)$. 

As we know, the multiplication in $\mathbb{R} \cext_{\varphi} S$ is given by
\begin{equation}\label{Mult-Central-Ext}
(u, s)(u', s') = (u + u' + \varphi(s, s'), ss'), \; \; \mbox{ for } (u, s), (u', s') \in \mathbb{R} \cext_{\varphi} S.
\end{equation}
Thus, we have the following expressions of the left-invariant vector fields on $\mathbb{R} \cext_{\varphi} S$,
\begin{equation}\label{Left-invariant-0}
\lvec{(1, 0)} = \displaystyle \frac{\partial}{\partial u}, \; \; \; \lvec{(0, X)}(u, s) = \displaystyle \frac{d}{dr}\Big|_{r=0} \varphi(s,\exp(rX)) \frac{\partial}{\partial u}\Big|_{(u, s)} + \lvec{X}(s),
\end{equation}
for $X \in \fs$ and $(u, s) \in \mathbb{R} \cext_{\varphi}S$.

Now, let $\tcM$ be the multiplicative vector field on $\mathbb{R} \cext_{\varphi} S$ whose flow is $\tphi$,
\[
\tcM(u, s) = \displaystyle \tpsi(u, s) \frac{\partial}{\partial u}\Big|_{(u, s)} + \bcM(u, s), \; \; \mbox{ for } (u, s) \in \mathbb{R} \cext_{\varphi} S,
\]
with $\tpsi \in\mathscr{C}^{\infty}(\mathbb{R} \cext_{\varphi} S)$ and $\bcM\colon \mathbb{R} \cext_{\varphi} S \to TS$ a time-dependent vector field on $S$.

From \eqref{der-induced}, \eqref{D-derivation} and \eqref{Left-invariant-0}, we deduce that
\[
0 = \displaystyle \lvec{D(1,0)} = \left[\frac{\partial}{\partial u}, \tcM\right]
\]
which implies that
\[
\tcM(u, s) = \tpsi(s) \frac{\partial}{\partial u}\Big|_{(u, s)} + \bcM(s), \; \; \mbox{ for } (u, s) \in \mathbb{R} \cext_{\varphi} S,
\]
with $\tpsi \in\mathscr{C}^{\infty}(S)$ and $\bcM \in \fX(S)$.

So, using \eqref{Mult-Central-Ext} and the fact that $\tcM$ is multiplicative, we conclude that $\bcM$ also is multiplicative. Moreover, from \eqref{D-derivation} and \eqref{Left-invariant-0}, 
it follows that the derivation on $\fs$ associated with $\bcM$ is just $D_{\fs}\colon \fs \to \fs$. Thus, $\bcM = \cM$, with $\cM$ the multiplicative vector field on $S$ whose flow 
is $\phi$, and
\[
\tcM(u, s) = \tpsi(s) \frac{\partial}{\partial u}\Big|_{(u, s)} + \cM(s).
\]
Therefore, the flow $\tphi\colon \mathbb{R} \to \Aut(\mathbb{R} \cext_{\varphi} S)$ of $\tcM$ has the form
\[
\tphi_t(u, s) = (u + \tchi_t(s), \phi_t(s))
\]
and
\begin{equation}\label{M-tilde}
\tcM(u, s) = \displaystyle \frac{d}{dt}\Big|_{t=0}(\tchi_t(s)) \frac{\partial}{\partial u}\Big|_{(u, s)} + {\mathcal M}(s)
\end{equation}
with $\tchi\in\mathscr{C}^{\infty}(\bR \times S)$. Since $\tphi$ is an action of $\mathbb{R}$ on $\mathbb{R} \cext_{\varphi} S$, we deduce that $\tchi$ satisfies the relation
\begin{equation}\label{Chi-Tilde-1}
\tchi_{t+t'}(s) = \tchi_t(s) + \tchi_{t'}(\phi_t(s)), \; \; \mbox{ for } t, t' \in \mathbb{R} \mbox{ and } s \in S.
\end{equation}
Moreover, using \eqref{Mult-Central-Ext} and the fact that $\tchi_t \in\Aut(\mathbb{R} \cext_{\varphi} S)$, for every $t \in \mathbb{R}$, we obtain that
\begin{equation}\label{Chi-Tilde-2}
\displaystyle \tchi_t(ss') - \tchi_t(s) - \tchi_t(s') = \varphi(\phi_t(s), \phi_t(s')) - \varphi (s, s'). 
\end{equation}
In addition, from \eqref{Left-invariant-0} and \eqref{M-tilde}, it follows that
\[
\lvec{D(0, X)} = [\lvec{(0, X)}, \tcM](-\varphi(\fe, \fe), \fe) = \displaystyle \frac{d}{dt}\Big|_{t=0} (d_{\fe}\tchi_t)(X) \frac{\partial}{\partial u}\Big|_{(-\varphi(\fe, \fe), \fe)} + D_{\fs}X,
\]
which, using (\ref{D-derivation}), implies that
\begin{equation}\label{Chi-Tilde-3}
\displaystyle \frac{d}{dt}\Big|_{t=0} (d_{\fe}\tchi_t) = 0.
\end{equation}
\end{example}
\begin{definition}
The Lie group $(\bR\cext_{\varphi} S) \rtimes_{\tphi} \bR$ endowed with the previous lcs structure of the first kind is called the \emph{lcs extension of the Lie group $S$ by the symplectic $2$-cocycle $\varphi$ and 
the symplectic representation $\phi\colon\mathbb{R}\to\Aut(S)$}.
\end{definition}

Lcs extensions of symplectic Lie groups by symplectic $2$-cocycles and symplectic representations are the models of connected simply connected lcs Lie groups of the first kind with bi-invariant Lee vector field. 
In fact, we can prove the following result.
\begin{theorem}\label{symplectic-lcs-Lie-group}
Let $G$ be a connected simply connected lcs Lie group of the first kind with bi-invariant Lee vector field. If $\fg$ is the Lie algebra of $G$ then $\fg$ is a lcs extension of a symplectic Lie algebra $\fs$ and if $S$ is a 
connected simply connected Lie group with Lie algebra $\fs$, we have that $G$ is isomorphic to a lcs extension of $S$. 
\end{theorem}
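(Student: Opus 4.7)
The plan is to combine the algebraic description from Theorem \ref{central-lcs} with the integration procedures recalled in Sections \ref{multiplicative} and \ref{central-ext-alg-group}. First, I would pass from the Lie group $G$ to its Lie algebra $\fg$: since the Lee vector field $\lvec{V}$ is bi-invariant, $V = \lvec{V}(\fe)$ lies in the center $\mathcal{Z}(\fg)$. Hence Theorem \ref{central-lcs} applies and produces a symplectic Lie algebra $(\fs, \sigma)$ of dimension $2n$, a symplectic derivation $D_\fs\colon\fs\to\fs$, and an identification $\fg \cong (\bR\cext_\sigma\fs)\rtimes_D\bR$, with $D$ the contact derivation on $\bR\cext_\sigma\fs$ defined by $D(u,X) = (0, D_\fs X)$. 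This gives the first half of the statement.

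For the global statement, let $S$ be the connected simply connected Lie group with Lie algebra $\fs$; then $\lvec{\sigma}$ endows $S$ with a left-invariant symplectic form. By the discussion in Section \ref{central-ext-alg-group}, since $S$ is connected and simply connected, the $2$-cocycle $\sigma \in \Lambda^2\fs^*$ can be integrated to a smooth $2$-cocycle $\varphi\colon S\times S \to \bR$ satisfying \eqref{sigma}; the Lie algebra of the central extension $\bR\cext_\varphi S$ is then precisely $\bR\cext_\sigma\fs$. Next, I would integrate the symplectic derivation $D_\fs$ via Section \ref{multiplicative}: it induces a multiplicative vector field $\cM$ on $S$ whose flow is a representation $\phi\colon\bR\to\Aut(S)$ with $\lvec{D_\fs X} = [\lvec{X},\cM]$. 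The key check is that $\phi$ is a \emph{symplectic} representation, i.e.\ $\phi_t^*\lvec{\sigma} = \lvec{\sigma}$. This reduces to $\Lie_\cM\lvec{\sigma}=0$, which in turn is equivalent to $D_\fs^*\sigma = 0$ (that is, $D_\fs$ being a symplectic derivation of $\fs$), something we already have.

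Having $\varphi$ and $\phi$, I would then form the lcs extension $\widetilde{G} = (\bR\cext_\varphi S)\rtimes_{\tphi}\bR$ as in Example \ref{symplectic-lcs-first-kind}. Its Lie algebra is, by construction, $(\bR\cext_\sigma\fs)\rtimes_D\bR$, which is isomorphic to $\fg$ via the isomorphism produced in the first paragraph. Since both $G$ and $\widetilde{G}$ are connected and simply connected and share the same Lie algebra, this Lie algebra isomorphism integrates to a unique Lie group isomorphism $\Psi\colon G \to \widetilde{G}$. Finally, because left-invariant forms are determined by their values at the identity, the fact that $\Psi_*\colon\fg\to\widetilde{\fg}$ intertwines the algebraic lcs data $(\omega,\eta)$ with the lcs data of $\widetilde{G}$ coming from Example \ref{symplectic-lcs-first-kind} implies that $\Psi$ is an isomorphism of lcs Lie groups of the first kind.

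The main obstacle I foresee is the verification that the representation $\phi\colon\bR\to\Aut(S)$ obtained by integrating $D_\fs$ is globally symplectic, not merely infinitesimally so. Because $S$ is connected, however, this is controlled by the infinitesimal condition $\Lie_\cM\lvec{\sigma}=0$, which transfers faithfully between the group and the algebra level. A minor subtlety, which I would address while constructing $\tphi$, is to record the relations \eqref{Chi-Tilde-1}--\eqref{Chi-Tilde-3} that determine the function $\tchi$ appearing in $\tphi_t(u,s)=(u+\tchi_t(s),\phi_t(s))$; these do not affect the existence of the extension but are needed to ensure that $\tphi$ preserves the contact form on $\bR\cext_\varphi S$, and hence that $\widetilde{G}$ carries the claimed lcs structure of the first kind with bi-invariant Lee vector field $(\lvec{R},0)$.
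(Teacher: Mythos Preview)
Your proposal is correct and follows essentially the same approach as the paper: pass to the Lie algebra via bi-invariance of $\lvec{V}$, apply Theorem \ref{central-lcs} to obtain $(\fs,\sigma,D_\fs)$, integrate $\sigma$ to a group $2$-cocycle $\varphi$ and $D_\fs$ to a symplectic representation $\phi$, build the lcs extension $(\bR\cext_\varphi S)\rtimes_{\tphi}\bR$, and conclude by simple connectedness. The only cosmetic difference is that the paper first produces the contact representation $\tphi$ on $\bR\cext_\varphi S$ (via Theorem \ref{Universal-covering-lcs}) and then extracts $\phi$ from it, whereas you integrate $D_\fs$ to $\phi$ first and invoke Example \ref{symplectic-lcs-first-kind} to obtain $\tphi$; both routes arrive at the same object and use the same verification $\Lie_\cM\lvec{\sigma}=0$.
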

\begin{proof}
Let $\fg$ be the Lie algebra of $G$, let $(\omega, \eta)$ be the lcs structure of the first kind on $\fg$ and let $V \in \fg$ be the Lee vector. Using that the Lee vector field $\lvec{V}$ of $G$ is bi-invariant, 
we have that $V$ belongs to the center of $\fg$. Thus, from Theorem \ref{central-lcs} and Definition \ref{lcs-extension-algebraic}, we deduce that $\fg$ may be identified with the lcs extension of a symplectic 
Lie algebra $(\fs, \sigma)$ by a symplectic derivation $D_{\fs}\colon\fs \to\fs$. Under this identification, $\omega = ((0, 0), 1)\in (\bR\oplus\fs^*)\oplus\mathbb{R}$, 
$\eta = ((1,0), 0)\in(\bR\oplus\fs^*)\oplus\mathbb{R}$ and $V = ((1,0), 0) \in (\bR\cext_\sigma\mathbb{R})\rtimes_D\mathbb{R}$ (compare with Remark \ref{lcs_extension}). 

Now, let $S$ be a connected simply connected Lie group with Lie algebra $\fs$ and $\varphi\colon S \times S \to \mathbb{R}$ a $2$-cocycle on $S$ with values in $\mathbb{R}$ such that the corresponding $2$-cocycle 
on $\fs$ with values in $\mathbb{R}$ is just $\sigma$ (see Section \ref{central-ext-alg-group}). Then, the central extension $H = \bR\cext_{\varphi}S$ is a connected simply connected contact Lie group with 
Lie algebra $\bR\cext_{\sigma}\fs$, contact structure $\lvec{\eta} = \lvec{(1,0)}$ and bi-invariant Reeb vector field $\lvec{V} = \lvec{(1,0)}$.

Next, proceeding as in the proof of Theorem \ref{Universal-covering-lcs}, we can take a contact representation
\[
\tphi\colon \mathbb{R} \to\Aut(\bR\cext_{\varphi}S)
\]
of $\mathbb{R}$ on the contact Lie group $\bR\cext_{\varphi}S$ such that the corresponding contact derivation $D\colon\bR\cext_\sigma \fs \to \bR\cext_\sigma \fs$ is given by
\[
D(u,X) = (0,D_{\fs}(X)).
\]
Then, the semidirect product $(\bR\cext_{\varphi}S)\rtimes_{\tphi}\mathbb{R}$ is a connected simply connected Lie group with Lie algebra the lcs extension $(\bR\cext_{\sigma}\fs) \rtimes_{D}\mathbb{R}$ of 
$(\fs, \sigma)$ by $D_{\fs}$. Thus, since $\fg$ is isomorphic to $(\bR\cext_{\sigma}\fs) \rtimes_{D}\mathbb{R}$, there exists a Lie group isomorphism between $G$ and 
$(\bR\cext_{\varphi} S) \rtimes_{\tphi}\mathbb{R}$ and, under this isomorphism, the Lee 1-form of $G$ is just the left-invariant $1$-form $\lvec{((0,0), 1)}$, with 
$((0,0), 1) \in \bR\oplus\fs^*\oplus\mathbb{R}$.

Now, let $\phi\colon\mathbb{R} \to\Aut(S)$ be a representation of the abelian Lie group $\mathbb{R}$ on $S$ such that the corresponding derivation on $\fs$ is just $D_{\fs}\colon\fs\to\fs$. Denote by ${\mathcal M}$ the 
multiplicative vector field on $S$ whose flow is $\{\phi_t\}_{t \in \mathbb{R}}$. Then, using \eqref{der-induced} and the fact that $D_{\fs}$ is a symplectic derivation, we deduce that
\[
{\mathcal L}_{\mathcal M}\lvec{\sigma} = 0,
\]
which implies that $\phi$ is a symplectic representation, i.e., 
\[
\phi_t^*\lvec{\sigma} = \lvec{\sigma}, \; \; \mbox{ for every } t \in \mathbb{R}.
\]
Finally, proceeding as in Example \ref{symplectic-lcs-first-kind}, one may see that
\[
\tphi_t(u,s) = ( u + \tchi_t(s),\phi_t(s)), \; \; \mbox{ for } t, u \in \mathbb{R} \mbox{ and } s \in S,
\]
with $\tchi\colon \bR\times S \to \mathbb{R}$ a smooth map satisfying (\ref{Chi-Tilde-1}), (\ref{Chi-Tilde-2}) and (\ref{Chi-Tilde-3}).
\end{proof}
\begin{remark}\label{co-compact-discrete}
Let $S$ be a connected simply connected symplectic Lie group, $\varphi\colon S \times S \to \mathbb{R}$ a symplectic $2$-cocycle on $S$ with values in $\mathbb{R}$ and $\phi\colon\mathbb{R} \to\Aut(S)$ a symplectic 
representation of $\mathbb{R}$ on $S$. Suppose that $\Gamma_S$ is a lattice in $S$ and $p, q$ are real numbers such that:
\begin{itemize}
\item
the restriction of $\varphi$ to $\Gamma_S \times \Gamma_S$ takes values in the integer lattice $p\bZ$ and
\item
the restriction to the integer lattice $q\bZ$ of the corresponding contact representation $\tphi\colon\mathbb{R} \to\Aut(\bR\cext_{\varphi} S)$ takes values in $\Aut(p\bZ\cext_{\varphi} \Gamma_S)$. 
\end{itemize}
Then, $(p\bZ\cext_{\varphi} \Gamma_S )\rtimes_{\tphi} q\bZ$ is a lattice in the lcs Lie group of the first kind $(\bR\cext_{\varphi} S) \rtimes_{\tphi}\mathbb{R}$ and 
$M = ((p\bZ\cext_{\varphi} \Gamma_S )\rtimes_{\tphi} q\bZ)\backslash ((\bR\cext_{\varphi} S) \rtimes_{\tphi}\mathbb{R})$ is a compact manifold. 
Furthermore, since the lcs structure of the first kind on $(\bR\cext_{\varphi} S) \rtimes_{\tphi}\mathbb{R}$ is left-invariant, it induces a lcs structure of the first kind on $M$. 
Thus, $M$ is a compact lcs manifold of the first kind.
\end{remark}

\subsubsection{Symplectic extensions of symplectic Lie groups and lcs Lie groups}\label{symp_ext_symp_lcs_groups}

In some cases, the symplectic Lie group $S$ of dimension $2n$ (in the previous section) may in turn be obtained from a symplectic Lie group $S_1$ of dimension $2n-2$.

In fact, {\em an integrated version} of the symplectic double extension of a symplectic Lie algebra (see Section \ref{contact-alg-lcs-alg}) produces $S$ from $S_1$.
This process may be described as follows.

Let $S_1$ be a Lie group of dimension $2n-2$ with Lie algebra $\fs_{1}$ and suppose that:
\begin{itemize}
\item there exists a $2$-cocycle on $S_1$ with values in $\mathbb{R}$, $\varphi_1\colon S_1 \times S_1 \to \mathbb{R}$, such that the corresponding $2$-cocycle on $\fs_{1}$ with values in $\mathbb{R}$, 
$\sigma_1\colon \fs_{1} \times \fs_{1} \to \mathbb{R}$ is non-degenerate and
\item there exists a representation of the abelian group $\mathbb{R}$ on $S_1$, $\phi_1\colon \mathbb{R} \to\Aut(S_1)$.
\end{itemize}

Under these conditions, the left-invariant $2$-form $\lvec{\sigma_1}$ is a symplectic structure on $S_1$.

Now, let $D_{\fs_{1}}\colon \fs_{1} \to \fs_{1}$ be the derivation on $\fs_{1}$ associated with the representation $\phi_1\colon\mathbb{R} \to\Aut(S_1)$ and 
$D_{\fs_{1}}^*\sigma_1\colon \fs_{1} \times \fs_1 \to \mathbb{R}$ the $2$-cocycle on $\fs_1$ with values in $\mathbb{R}$ given by \eqref{D-s1-star}. Then, we will define a $2$-cocycle on $S_1$ with values in
$\mathbb{R}$ such that the corresponding $2$-cocycle on $\fs_{1}$ is just $D_{\fs_{1}}^*\sigma_{1}$.

For this purpose, we will consider the map $(\varphi_{1}, \phi_{1})\colon S_1 \times S_1 \to \mathbb{R}$ given by
\[
(\varphi_1, \phi_1)(s_1, s'_1) = \displaystyle \frac{d}{dr}\Big|_{r =0}\varphi_1(\phi_1(r)(s_1), \phi_1(r)(s'_1)), \; \; \mbox{ for } s_1, s'_1 \in S_1.
\]
Using that $\varphi_1\colon S_1 \times S_1 \to \mathbb{R}$ is a $2$-cocycle on $S_1$ and the fact that $\phi_1\colon \mathbb{R} \to\Aut(S_1)$ is a representation, we have that $(\varphi_1, \phi_1)$ also is a $2$-cocycle. 
In addition, if $(\sigma_{1}, T_{\fe_1}\phi_{1})\colon \fs_{1} \times \fs_{1} \to \mathbb{R}$ is the $2$-cocycle on $\fs_{1}$ associated with $(\varphi_1, \phi_1)$ then, from \eqref{sigma}, it follows that
\[
(\sigma_1, T_{\fe_1}\phi_1)(X_1, X'_1) = \displaystyle \frac{d}{dr}\Big|_{r = 0} \sigma_1((T_{\fe_1}\phi_1(r))(X_1), (T_{\fe_1}\phi_1(r))(X'_1)).
\]
On the other hand, using \eqref{der-induced}, we obtain that
\[
\lvec{D_{\fs_{1}}X_1} = \displaystyle \frac{d}{dr}\Big|_{r=0}(T_{\fe_1}\phi_1(r))(X_1).
\]

This implies that
\[
(\sigma_1, T_{\fe_1}\phi_1)(X_1, X'_1) = \sigma_1(D_{\fs_{1}}X_1, X'_1) + \sigma_1(X_1, D_{\fs_{1}}X'_1).
\]
In other words, $(\sigma_1, T_{\fe_1}\phi_1)$ is just the $2$-cocycle $D^*_{\fs_{1}}\sigma_1$ and, thus, $(\varphi_1, \phi_1)$ is a $2$-cocycle on $S_1$ associated with $D^*_{\fs_{1}}\sigma_1$. Therefore, we may consider the 
central extension $H_1 = \mathbb{R} \cext_{(\varphi_1, \phi_1)}S_1$. We remark that the multiplication in $H_1$ is given by
\begin{equation}\label{mult-H1}
(r, s_1)(r', s'_1) = (r + r' + (\varphi_1, \phi_1)(s_1, s'_1), s_1 s'_1).
\end{equation}
We will denote by $\fh_{1}$ the Lie algebra of $H_1$. It follows that $\fh_1 = \mathbb{R} \cext_{D_{\fs_1}^*\sigma_1}\fs_1$ and, moreover,
\begin{equation}\label{left-invariant}
\lvec{(1, 0)} = \displaystyle \frac{\partial}{\partial r}, \; \; \; \lvec{(0, X_1)}(r, s_1) = \displaystyle \frac{d}{du}\Big|_{u =0}(\varphi_1, \phi_1)(s_1,\exp(uX_1))\frac{\partial}{\partial r}\Big|_{(r, s_1)} + \lvec{X_1}(s_1)
\end{equation}
for $X_1 \in \fs_1$ and $(r, s_1) \in \mathbb{R} \cext_{(\varphi_1, \phi_1)}S_1 = H_1$.

Now, as in Section \ref{sym-Lie-alg-lcs-Lie}, suppose that $Z_1 \in \fs_1$ and that the map $(-i_{Z_1}\sigma_1, -D_{\fs_1}) \colon \fh_1 \to \fh_1$ given by
\begin{equation}\label{Dh1}
(-i_{Z_1}\sigma_1, -D_{\fs_1})(r, X_1) = (-\sigma_1(Z_1, X_1), -D_{\fs_1}X_1), \; \; \mbox{ for } r \in \mathbb{R} \mbox{ and } X_1 \in \fs_1,
\end{equation}
is a derivation on the Lie algebra $\fh_1$. Denote by 
\[
\phi_{H_1}\colon \mathbb{R} \to\Aut(H_1) =\Aut(\mathbb{R} \cext_{(\varphi_1, \phi_1)}S_1)
\]
the representation of the abelian Lie group $\mathbb{R}$ on $H_1$ associated with the derivation $(-i_{Z_1}\sigma_1, D_{\fs_1})\colon \fh_1 \to \fh_1$ and by ${\mathcal M}_{H_1} \in \fX(H_1)$ the 
multiplicative vector field on $H_1$ whose flow is $\phi_{H_1}$. Then,
\[
{\mathcal M}_{H_1}(r, s_1) = \displaystyle \psi(r, s_1) \frac{\partial}{\partial r}\Big|_{(r, s_1)} + {\mathcal M}_{S_1}(r, s_1), \; \; \mbox{ for } (r, s_1) \in \mathbb{R} \cext_{(\varphi_1, \phi_1)} S_1 = H_1,
\]
with $\psi \in\mathscr{C}^{\infty}(H_1)$ and ${\mathcal M}_{S_1}\colon H_1 \to TS_1$ a time-dependent vector field on $S_1$. Since
\[
0 = \lvec{(Z_1, D_{\fs_1})(1, 0)} = [\lvec{(1, 0)}, {\mathcal M}_{H_1}],
\]
we conclude from \eqref{left-invariant} that
\[
{\mathcal M}_{H_1}(r, s_1) = \displaystyle \psi(s_1) \frac{\partial}{\partial r}\Big|_{(r, s_1)} + {\mathcal M}_{S_1}(s_1),
\]
with $\psi \in\mathscr{C}^{\infty}(S_1)$ and ${\mathcal M}_{S_1} \in \fX(S_1)$. Now, using \eqref{mult-H1} and the fact that ${\mathcal M}_{H_1}$ is multiplicative, 
it follows that ${\mathcal M}_{S_1}$ also is multiplicative. In addition, from \eqref{left-invariant} and \eqref{Dh1}, 
we deduce that the derivation on $\fs_1$ associated with ${\mathcal M}_{S_1}$ is just $-D_{\fs_1}\colon \fs_1 \to \fs_1$. Therefore, ${\mathcal M}_{S_1} = -{\mathcal M}_1$, with ${\mathcal M}_1$ the multiplicative vector 
field on $S_1$ whose flow is $\phi_1$, and
\[
{\mathcal M}_{H_1}(r, s_1) = \psi(s_1) \frac{\partial}{\partial r}\Big|_{(r, s_1)} - {\mathcal M}_1(s_1).
\]
This implies that the flow $\phi_{H_1}\colon \mathbb{R} \to\Aut(H_1)$
of ${\mathcal M}_{H_1}$ has the form
\[
\phi_{H_1}(t)(r, s_1) = (r + \chi_t(s_1), \phi_1(-t)(s_1))
\]
and
\begin{equation}\label{MH1}
{\mathcal M}_{H_1}(r, s_1) = \displaystyle \frac{d}{dt}\Big|_{t=0}(\chi_t(s_1)) \frac{\partial}{\partial r}\Big|_{(r, s_1)} - {\mathcal M}_1(s_1)
\end{equation}
with $\chi \in \mathscr{C}^{\infty}(\mathbb{R} \times S_1)$.

In what follows, we will denote by $(\chi, \phi_1)$ the representation $\phi_{H_1}$. Since $(\chi, \phi_1)$ is an action of $\mathbb{R}$ on $H_1$, we deduce that $\chi$ satisfies the relation
\begin{equation}\label{double-symp-1}
\chi_{t+t'}(s_1) = \chi_t(s_1) + \chi_{t'}(\phi_1(-t)s_1), \; \; \mbox{ for } t, t' \in \mathbb{R} \mbox{ and } s_1 \in S_1.
\end{equation}
Moreover, using \eqref{mult-H1} and the fact that $(\chi, \phi_1)(t) \in\Aut(H_1)$, for every $t \in \mathbb{R}$, we obtain that
\begin{equation}\label{double-symp-2}
\displaystyle \chi_t(s_1s'_1) - \chi_t(s_1) - \chi_t(s'_1) = \frac{d}{dr}\Big|_{r=-t} \varphi_1(\phi_1(r)s_1, \phi_1(r)s'_1) - \frac{d}{dr}\Big|_{r=0} \varphi_1(\phi_1(r)s_1, \phi_1(r)s'_1). 
\end{equation}
In addition, from \eqref{left-invariant} and \eqref{MH1}, it follows that
\[
[\lvec{(0, X_1)}, {\mathcal M}_{H_1}](0, \fe_1) = \displaystyle \frac{d}{dt}\Big|_{t=0} (d_{\fe_1}\chi_t)(X_1) \frac{\partial}{\partial r}\Big|_{(0, \fe_1)} - D_{\fs_1}X_1,
\]
which, using \eqref{Dh1}, implies that
\begin{equation}\label{double-symp-3}
\displaystyle \frac{d}{dt}\Big|_{t=0} (d_{\fe_1}\chi_t)(X_1) = -\sigma_1(Z_1, X_1), \; \; \mbox{ for } X_1 \in \fs_1.
\end{equation}
Next, we consider the Lie group
\[
S = H_1 \rtimes_{(\chi, \phi_1)} \mathbb{R} = (\mathbb{R} \cext_{(\varphi_1, \phi_1)}S_1) \rtimes_{(\chi, \phi_{1})} \mathbb{R}
\]
with Lie algebra
\[
\fs = \fh_1 \rtimes_{(-i_{Z_1}\sigma_1, -D_{\fs_1})} \mathbb{R} = (\mathbb{R} \cext_{D^*_{\fs_1}\sigma_1}\fs_1) \rtimes_{(-i_{Z_1}\sigma_1, -D_{\fs_1})}\mathbb{R}.
\]
Then, following the construction in Section \ref{contact-alg-lcs-alg}, we have that the $2$-cocycle $\sigma$ on $\fs$ given by \eqref{sym-str-double-ext} is non-degenerate and it defines a 
left-invariant symplectic $2$-form $\lvec{\sigma}$ on $S$. Thus, $S$ is a symplectic Lie group.

Using a similar terminology as in the Lie algebra case, we introduce the following definition.
\begin{definition} 
The symplectic Lie group $(S, \lvec{\sigma})$ is the \emph{double extension} of the symplectic Lie group $(S_1, \lvec{\sigma_1})$ by the symplectic $2$-cocycle $\varphi_1\colon S_1 \times S_1 \to \mathbb{R}$ on $S_1$, 
the representation $\phi_1\colon\mathbb{R} \to\Aut(S_1)$ of $\mathbb{R}$ on $S_1$ and the smooth map $\chi\colon\mathbb{R} \times S_1 \to \mathbb{R}$, 
the latter satisfying \eqref{double-symp-1}, \eqref{double-symp-2} and \eqref{double-symp-3}.
\end{definition}
We remark that the discussion in this section proves the following result
\begin{proposition}\label{sequence-sym-Lie-group}
Let $(S, \lvec{\sigma})$ be a connected simply connected Lie group with Lie algebra $\fs$ and suppose that $\fs$ is the double extension of the symplectic Lie 
algebra $(\fs_1, \sigma_1)$ by a derivation $D_{\fs_1}\colon \fs_1 \to \fs_1$ and an element $Z_1 \in \fs_1$ satisfying \eqref{der-ext-central}. Then, we can choose the following objects:
\begin{itemize}
\item
a connected, simply connected Lie group $S_1$ with Lie algebra $\fs_1$;
\item
a $2$-cocycle $\varphi_1\colon S_1 \times S_1 \to \mathbb{R}$ on $S_1$ such that the corresponding $2$-cocycle on $\fs_1$ is just $\sigma_1$;
\item
a representation $\phi_1\colon\mathbb{R} \to\Aut(S_1)$ of $\mathbb{R}$ on $S_1$ such that the corresponding derivation on $\fs_1$ is $D_{\fs_1}$ and
\item
a smooth map $\chi\colon\mathbb{R} \times S_1 \to \mathbb{R}$ which satisfies \eqref{double-symp-1}, \eqref{double-symp-2} and \eqref{double-symp-3}.
\end{itemize}
In addition, $(S, \lvec{\sigma})$ is the double extension of the symplectic Lie group $(S_1, \lvec{\sigma_1})$ by $\varphi_1$, $\phi_1$ and $\chi$.
\end{proposition}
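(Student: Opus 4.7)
The plan is to assemble the four pieces of data by successively integrating Lie algebra objects to Lie group objects, using the machinery recalled in Sections \ref{multiplicative} and \ref{central-ext-alg-group}, and then to read off from the explicit construction described in Section \ref{symp_ext_symp_lcs_groups} that the resulting symplectic double extension reproduces $(S,\lvec{\sigma})$.

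First, I take $S_1$ to be the unique connected simply connected Lie group with Lie algebra $\fs_1$. Since $S_1$ is connected simply connected and $\sigma_1\in\Lambda^2\fs_1^*$ is a $2$-cocycle on $\fs_1$, the discussion at the end of Section \ref{central-ext-alg-group} (see \cite{TuWi}) yields a smooth $2$-cocycle $\varphi_1\colon S_1\times S_1\to\mathbb{R}$ whose associated Lie-algebra $2$-cocycle, computed via \eqref{sigma}, is $\sigma_1$. Next, since $D_{\fs_1}$ is a derivation of $\fs_1$, the discussion after \eqref{extension_derivation} in Section \ref{multiplicative} produces a unique multiplicative vector field $\cM_1$ on $S_1$ whose flow $\phi_1\colon\mathbb{R}\to\Aut(S_1)$ is a representation of $\mathbb{R}$ on $S_1$ inducing $D_{\fs_1}$ via \eqref{der-induced}.

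Now I form the central extension $H_1=\mathbb{R}\cext_{(\varphi_1,\phi_1)}S_1$, which sits in a short exact sequence $1\to\mathbb{R}\to H_1\to S_1\to 1$ with $\mathbb{R}$ and $S_1$ both connected simply connected, hence $H_1$ is itself connected simply connected. The identification of the Lie algebra of $H_1$ carried out in Section \ref{symp_ext_symp_lcs_groups} shows that $\fh_1=\mathbb{R}\cext_{D^*_{\fs_1}\sigma_1}\fs_1$. Because \eqref{der-ext-central} holds by hypothesis, the linear map $(-i_{Z_1}\sigma_1,-D_{\fs_1})\colon\fh_1\to\fh_1$ defined in \eqref{Dh1} is a derivation on $\fh_1$; applying Section \ref{multiplicative} once more to the connected simply connected Lie group $H_1$, this derivation integrates to a unique representation $\phi_{H_1}\colon\mathbb{R}\to\Aut(H_1)$.

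The final step is to extract $\chi$ and verify its three properties. Writing the multiplicative vector field $\cM_{H_1}$ generating $\phi_{H_1}$ in the coordinates $(r,s_1)$ on $H_1$ and using, first, $[\lvec{(1,0)},\cM_{H_1}]=0$ to eliminate $r$-dependence in the $\partial_r$ coefficient and, second, the multiplicativity of $\cM_{H_1}$ together with the explicit multiplication \eqref{mult-H1} to force the $S_1$-component to coincide with $-\cM_1$, one arrives at the form $\phi_{H_1}(t)(r,s_1)=(r+\chi_t(s_1),\phi_1(-t)(s_1))$ with $\chi\in\mathscr{C}^\infty(\mathbb{R}\times S_1)$. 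Then condition \eqref{double-symp-1} expresses that $\phi_{H_1}$ is an $\mathbb{R}$-action; condition \eqref{double-symp-2} expresses that each $\phi_{H_1}(t)$ is a Lie group automorphism of $H_1=\mathbb{R}\cext_{(\varphi_1,\phi_1)}S_1$ with multiplication \eqref{mult-H1}; condition \eqref{double-symp-3} records exactly that the derivation induced by $\cM_{H_1}$ on $\fh_1$ equals $(-i_{Z_1}\sigma_1,-D_{\fs_1})$, by equating the two sides of \eqref{Dh1} through \eqref{left-invariant} and \eqref{MH1}. With these four objects in hand, the double-extension recipe of Section \ref{symp_ext_symp_lcs_groups} produces a symplectic Lie group whose Lie algebra is, tautologically, the double extension of $(\fs_1,\sigma_1)$ by $D_{\fs_1}$ and $Z_1$; since this Lie algebra is $\fs$ and the resulting Lie group is connected and simply connected (by two applications of the exact-sequence argument used for $H_1$), it is isomorphic to $(S,\lvec{\sigma})$, which is what was to be shown. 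The only place any real work is needed is the derivation/multiplicativity computation identifying the form of $\cM_{H_1}$; this is precisely the content of the displayed argument in Section \ref{symp_ext_symp_lcs_groups}, so I would simply cite that calculation rather than repeat it.
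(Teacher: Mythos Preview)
Your proposal is correct and follows the same approach as the paper: indeed, the paper does not give a separate proof of this proposition but simply states that ``the discussion in this section proves the following result,'' and your argument is precisely an explicit recapitulation of that discussion, integrating the Lie-algebra data step by step via Sections \ref{multiplicative} and \ref{central-ext-alg-group} and then citing the multiplicativity computation for $\cM_{H_1}$ to extract $\chi$ and its three properties. Your explicit remark that $H_1$ (and then the full double extension) is connected and simply connected via the short exact sequence is a welcome clarification that the paper leaves implicit.
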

\begin{remark}\label{co-compact-discrete-symplectic}
Let $(S, \lvec{\sigma})$ be a connected simply connected symplectic Lie group which is the double extension of the connected simply connected symplectic Lie group $(S_1, \lvec{\sigma_1})$ by 
$\varphi_1\colon S_1 \times S_1 \to \mathbb{R}$, $\phi_1\colon \mathbb{R} \to\Aut(S_1)$ and $\chi\colon\mathbb{R} \times S_1 \to \mathbb{R}$. Suppose that $\Gamma_{S_1}$ is a lattice in $S_1$ and $p_1, q_1$ are 
real numbers such that:
\begin{itemize}
\item
the restriction of $(\varphi_1, \phi_1)$ to $\Gamma_{S_1} \times \Gamma_{S_1}$ takes values in the integer lattice $p_1\bZ$ and
\item
the restriction to the integer lattice $q_1\bZ$ of the representation $(\chi, \phi_1)\colon\mathbb{R} \to\Aut(\bR\cext_{(\varphi_1, \phi_1)} S_1)$ takes values in $\Aut(p_1\bZ\cext_{(\varphi_1, \phi_1)} \Gamma_{S_1})$. 
\end{itemize}
Then $\Gamma=(p_1\bZ\cext_{(\varphi_1, \phi_1)} \Gamma_{S_1})\rtimes_{(\chi, \phi_1)} q_1\bZ$ is a lattice in the symplectic Lie group $S = (\bR\cext_{(\varphi_1, \phi_1)} S_1) \rtimes_{(\chi, \phi_1)}\mathbb{R}$ and 
$M = \Gamma\backslash S$ is a compact manifold. 
Furthermore, since the symplectic structure on $S$ is left-invariant, it induces a symplectic structure on $M$. 
Thus, $M$ is a compact symplectic manifold.
\end{remark}
Now, let $(S, \lvec{\sigma})$ be the double extension of the symplectic Lie group $(S_1, \lvec{\sigma_1})$ by the $2$-cocycle $\varphi_1\colon S_1 \times S_1 \to \mathbb{R}$, the representation $\phi_1\colon \mathbb{R} \to
\Aut(S_1)$ and the smooth map $\chi\colon\mathbb{R} \times S_1 \to \mathbb{R}$. Moreover, 
suppose that:
\begin{itemize}
\item $\varphi\colon S \times S \to \mathbb{R}$ is a $2$-cocycle on $S$ with values in $\mathbb{R}$ such that the corresponding $2$-cocycle on $\fs$ is just $\sigma$ and
\item $\phi\colon\mathbb{R} \to \Aut(S)$ is a symplectic representation of $\mathbb{R}$ on $S$.
\end{itemize}
Then, one may consider the lcs extension
\[
(\mathbb{R} \cext_{\varphi} S) \rtimes_{\tphi} \mathbb{R} = \mathbb{R} \cext_{\varphi} ((\mathbb{R} \cext_{(\varphi_1, \phi_1)}S_1) \rtimes_{(\chi, \phi_1)} \mathbb{R}\rtimes_{\tphi} \mathbb{R}
\]
of $S$ by $\varphi$ and $\phi$ which is a lcs Lie group of the first kind with bi-invariant Lee vector field by Example \ref{symplectic-lcs-first-kind}.

These results will be useful in the next section. In fact, we will see that every connected simply connected lcs nilpotent Lie group with non-zero Lee $1$-form of dimension $2n$ is obtained as the lcs extension by a nilpotent 
representation of a connected simply connected symplectic nilpotent Lie group $S$ of dimension $2n-2$ and, in turn, $S$ may be obtained by a sequence of $n-1$ symplectic double extensions by nilpotent representations 
from the abelian Lie group $\mathbb{R}^2$.

\subsection{Lcs nilpotent Lie groups}

In this section, we discuss lcs structures (with non-zero Lee $1$-form) on nilpotent Lie groups. 

\begin{example}
Let $(S, \lvec{\sigma})$ be the double extension of the symplectic Lie group $(S_1, \lvec{\sigma_1})$ by the symplectic $2$-cocycle $\varphi_1\colon S_1 \times S_1 \to \mathbb{R}$, 
the representation $\phi_1\colon\mathbb{R} \to \Aut(S_1)$ and the smooth map $\chi\colon \mathbb{R} \times S_1 \to \mathbb{R}$ satisfying \eqref{double-symp-1}, \eqref{double-symp-2} and \eqref{double-symp-3}.

Now, suppose that $S_1$ is nilpotent and that $\phi_1$ also is. Then, the corresponding derivation $D_{\fs_1}$ on the Lie algebra $\fs_1$ of $S_1$ is nilpotent. 
Furthermore, the Lie group $H_1 = \mathbb{R} \cext_{(\varphi_1, \phi_1)} S_1$ is nilpotent and the derivation $(-i_{Z_1}\sigma_1, -D_{\fs_1})$ on the Lie algebra of $H_1$ given by \eqref{Dh1} also is nilpotent. So, using that 
the Lie algebra $\fs$ of $S$ is $\fs = \fh_1 \rtimes_{(-i_{Z_1}\sigma_1, -D_{\fs_1})} \mathbb{R}$, we deduce that $(S, \lvec{\sigma})$ is a nilpotent symplectic Lie group, the nilpotent double extension of $(S_1, \lvec{\sigma_1})$ 
by $\varphi_1$, $\phi_1$ and $\chi$.

Next, let $\varphi\colon S \times S \to \mathbb{R}$ be a symplectic $2$-cocycle on $S$ and $\phi\colon\mathbb{R} \to\Aut(S)$ a symplectic representation of $\mathbb{R}$ on $S$. Denote by $D_{\fs}\colon 
\fs \to \fs$ the symplectic derivation associated with $\phi$. Then, we can consider the lcs extension
\[
G = (\mathbb{R} \cext_{\varphi} S) \rtimes_{\tphi} \mathbb{R} = \mathbb{R} \cext_{\varphi} ((\mathbb{R} \cext_{(\varphi_1, \phi_1)}S_1) \rtimes_{(\chi, \phi_1)} \mathbb{R}) \rtimes_{\tphi} \mathbb{R}
\]
It is a lcs Lie group of the first kind with bi-invariant Lee vector field and with Lie algebra the lcs extension of $\fs$ by $\sigma$ and $D$,
\[
\fg = (\mathbb{R} \cext_{\sigma} \fs) \rtimes_{D} \mathbb{R},
\]
$D$ being the contact derivation on $\mathbb{R} \cext_{\sigma} \fs$ associated with the contact representation $\tphi\colon\bR\to\mathbb{R} \cext_{\varphi} S$ lifting $\phi$.

Assume that $\phi$ is nilpotent. Then, the derivation $D_{\fs}$ is nilpotent and, using Theorem \ref{first-descrip-lcs-nilpotent}, it follows that $\fg$ is a lcs nilpotent Lie algebra with non-zero central Lee vector. 
Therefore, $G$ is a lcs nilpotent Lie group with non-zero bi-invariant Lee vector field.
\end{example} 
Next, we will see that the Lie group $G$ in the previous example is the model of a connected simply connected lcs nilpotent Lie group with non-zero Lee $1$-form. In fact, we will prove the following result.
\begin{theorem}\label{structure_nilpotent}
Let $G$ be a connected simply connected lcs nilpotent Lie group of dimension $2n+2$ with non-zero Lee $1$-form. Then, $G$ is isomorphic to a lcs extension of a connected simply connected symplectic nilpotent Lie group $S$ and, 
in turn, $S$ is isomorphic to a symplectic Lie group which may be obtained as a sequence of $(n-1)$ symplectic nilpotent double extensions from the abelian Lie group $\mathbb{R}^2$.
\end{theorem}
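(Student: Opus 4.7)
The plan is to lift the algebraic structure theorem (Theorem \ref{description-Lie-alg-nil}) to the Lie group level using the standard correspondence between connected simply connected Lie groups and their Lie algebras, together with the integration results developed in Sections \ref{relacion-contact} and \ref{rel_symplectic}.

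First, let $\fg$ denote the Lie algebra of $G$. Since $G$ is nilpotent and carries a left-invariant lcs structure with non-zero Lee $1$-form, $\fg$ is a lcs nilpotent Lie algebra with non-zero Lee form. By Theorem \ref{first-descrip-lcs-nilpotent}, the lcs structure on $\fg$ is of the first kind, the Lee vector $V\in\fg$ is central, and $\fg$ is the lcs extension of a symplectic nilpotent Lie algebra $(\fs,\sigma)$ of dimension $2n$ by a symplectic nilpotent derivation $D_\fs\colon\fs\to\fs$. Because $V$ is central in $\fg$, the left-invariant Lee vector field $\lvec{V}$ on $G$ is bi-invariant.

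Second, $G$ is connected and simply connected with bi-invariant Lee vector field, so Theorem \ref{symplectic-lcs-Lie-group} applies. Letting $S$ be the connected simply connected Lie group with Lie algebra $\fs$ (which is nilpotent because $\fs$ is), one obtains a $2$-cocycle $\varphi\colon S\times S\to\bR$ integrating $\sigma$ and a symplectic representation $\phi\colon\bR\to\Aut(S)$ integrating $D_\fs$, so that $G$ is isomorphic to the lcs extension $(\bR\cext_\varphi S)\rtimes_{\tphi}\bR$ of $S$ by $\varphi$ and $\phi$, as defined in Section \ref{rel_symplectic}.

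Third, to express $S$ itself as an iterated symplectic double extension, apply the Lie algebra part of Theorem \ref{description-Lie-alg-nil}: $\fs$ is obtained from $\bR^2$ by a sequence of $(n-1)$ symplectic double extensions by nilpotent derivations, producing a chain $\bR^2=\fs_0\subset\fs_1\subset\cdots\subset\fs_{n-1}=\fs$ where each $\fs_k$ is the symplectic double extension of $\fs_{k-1}$ by data $(D_{\fs_{k-1}}, Z_{k-1})$ with $D_{\fs_{k-1}}$ nilpotent. Let $S_0=\bR^2$ and inductively let $S_k$ be the connected simply connected Lie group with Lie algebra $\fs_k$; each $S_k$ is nilpotent. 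Applying Proposition \ref{sequence-sym-Lie-group} at each step, we may choose a $2$-cocycle $\varphi_{k-1}\colon S_{k-1}\times S_{k-1}\to\bR$ integrating $\sigma_{k-1}$, a representation $\phi_{k-1}\colon\bR\to\Aut(S_{k-1})$ integrating $D_{\fs_{k-1}}$, and a smooth map $\chi_{k-1}\colon\bR\times S_{k-1}\to\bR$ satisfying \eqref{double-symp-1}, \eqref{double-symp-2} and \eqref{double-symp-3}, so that $S_k$ is the symplectic double extension of $S_{k-1}$ by this data. After $n-1$ steps we identify $S=S_{n-1}$ with the desired iterated symplectic nilpotent double extension of $\bR^2$.

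The main obstacle is purely a bookkeeping matter: ensuring that Proposition \ref{sequence-sym-Lie-group} can be applied inductively and that each intermediate $S_k$ is nilpotent, connected and simply connected, with all integrated data mutually compatible. This reduces to the standard fact that the exponential map of a connected simply connected nilpotent Lie group is a global diffeomorphism, so that nilpotent derivations and Lie-algebra $2$-cocycles integrate uniquely to nilpotent smooth representations and smooth group $2$-cocycles, and that central extensions and semidirect products of nilpotent Lie groups by $\bR$ (via nilpotent data) remain nilpotent.
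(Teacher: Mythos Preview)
Your proof is correct and follows essentially the same route as the paper: reduce to the Lie algebra via Theorem \ref{first-descrip-lcs-nilpotent} and Theorem \ref{description-Lie-alg-nil}, then integrate back to the group level using Theorem \ref{symplectic-lcs-Lie-group} and Proposition \ref{sequence-sym-Lie-group} applied inductively along the chain of symplectic double extensions. The only differences are cosmetic (your indexing $\fs_0=\bR^2,\dots,\fs_{n-1}=\fs$ versus the paper's $\fs_{n-1}=\bR^2,\dots,\fs_1,\fs$, and your more explicit final paragraph on why the inductive integration is unobstructed).
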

\begin{proof}
Let $\fg$ be the Lie algebra of $G$. Then, using Theorem \ref{first-descrip-lcs-nilpotent}, we deduce that $\fg$ is a lcs Lie algebra of the first kind with non-zero central Lee vector. Thus, $G$ is lcs Lie group of the first 
kind with non-zero bi-invariant Lee vector field.

On the other hand, from Theorem \ref{description-Lie-alg-nil}, it follows that $\fg$ is the lcs extension of a symplectic nilpotent Lie algebra $\fs$ (of dimension $2n$) by a symplectic nilpotent derivation and, in turn, $\fs$ 
may be obtained as a sequence of $(n-1)$ symplectic double extensions by nilpotent derivations from the abelian Lie algebra $\mathbb{R}^2$.

Denote by $\fs_1, \dots ,\fs_{n-2}, \fs_{n-1} = \mathbb{R}^2$ the corresponding sequence of symplectic nilpotent Lie algebras and by $S$ (respectively, $S_i$, with $i = 1, \dots, n-1$) a connected simply connected nilpotent Lie 
group with Lie algebra $\fs$ (respectively, $\fs_i$, with $i = 1, \dots, n-1$).

Then, using Theorem \ref{symplectic-lcs-Lie-group} and Proposition \ref{sequence-sym-Lie-group}, we deduce that $G$ is isomorphic to a lcs extension of $S$ and, in turn, $S$ is isomorphic to a symplectic nilpotent Lie group which 
may be obtained as a sequence of $(n-1)$ symplectic nilpotent double extensions from the abelian Lie group $\mathbb{R}^2$. The corresponding sequence of $(n-1)$ connected simply connected nilpotent symplectic Lie groups is 
$S_1, \dots, S_{n-2}, S_{n-1} = \mathbb{R}^2$. 
\end{proof}

\begin{remark}\label{discrete-subgroup-lcs-nilpotent_1}
Let $S$ be a connected simply connected symplectic nilpotent Lie group with Lie algebra $\fs$, let $\varphi\colon S \times S \to \mathbb{R}$ be a symplectic $2$-cocycle on $S$ with values in $\mathbb{R}$ and 
let $\phi\colon\mathbb{R} \to\Aut(S)$ be a symplectic nilpotent representation of $\mathbb{R}$ on $S$. Moreover, suppose that the structure constants of $\fs$ with respect to a basis are rational numbers. 
It follows from Theorem \ref{Maltsev} that $S$ admits a lattice $\Gamma_S$. In addition, we will assume that we can choose two real numbers $p$ and $q$ such that $\Gamma_S$, $p$ and $q$ satisfy the conditions in Remark 
\ref{co-compact-discrete}. Under these conditions,
\[
M = ((p\bZ\cext_{\varphi} \Gamma_S )\rtimes_{\tilde{\phi}} q\bZ)\backslash ((\bR\cext_{\varphi} S) \rtimes_{\tilde{\phi}}\mathbb{R})
\] 
is a compact lcs nilmanifold of the first kind.
\end{remark}

\begin{remark}\label{discrete-subgroup-lcs-nilpotent_2}
A particular case of the previous situation is the following one. Suppose that $S$ is the nilpotent double extension of a connected simply connected symplectic nilpotent Lie group $S_1$ with Lie algebra $\fs_1$. 
Moreover, suppose that the structure constants of $\fs_1$ with respect to a basis are rational numbers. Then it follows from Theorem \ref{Maltsev} that $S_1$ admits a lattice $\Gamma_{S_1}$. 

In addition, we will assume that we can choose two real numbers $p_1$ and $q_1$ such that $\Gamma_{S_1}$, $p_1$ and $q_1$ satisfy the conditions in Remark \ref{co-compact-discrete-symplectic}. 
Under these conditions, we have that
\[
\Gamma_S = (p_1\bZ\cext_{(\varphi_1, \phi_1)} \Gamma_{S_1})\rtimes_{(\chi, \phi_1)} q_1\bZ
\]
is a lattice of $S$. Finally, if $p$ and $q$ are real numbers as in Remark \ref{discrete-subgroup-lcs-nilpotent_1}, we deduce that
\[
M = (p \bZ \cext_{\varphi}((p_1\bZ\cext_{(\varphi_1, \phi_1)} \Gamma_{S_1})\rtimes_{(\chi, \phi_1)} q_1\bZ) \rtimes_{\tphi} q\bZ)\backslash 
((\bR\cext_{\varphi} ((\bR\cext_{(\varphi_1, \phi_1)} S_1) \rtimes_{(\chi, \phi_1)}\mathbb{R}) \rtimes_{\tphi}\mathbb{R})
\]
is a compact lcs nilmanifold of the first kind.
\end{remark}

To conclude, we show how to recover the examples of Section \ref{sec:examples} in the framework of \thmref{structure_nilpotent} and Remarks \ref{discrete-subgroup-lcs-nilpotent_1} and \ref{discrete-subgroup-lcs-nilpotent_2}.

We start with the 4-dimensional nilpotent Lie group $G$ constructed in Section \ref{Dimension:4}. 
In this case, $S$ is $\bR^2$ with its structure of abelian Lie group and $H$ is the central extension $\bR\cext_\varphi\bR^2$ with respect to the 2-cocycle $\varphi\colon\bR^2\times\bR^2\to\bR$, 
$((x,y),(x',y'))\mapsto yx'$. Using \eqref{sigma}, $\varphi$ determines the 2-cocycle $\sigma$ on $\bR^2$, which is the standard symplectic form $\sigma=dx\wedge dy$ on $\bR^2$. Further, one can see that the 
symplectic nilpotent representation $\phi\colon\bR\to\Aut(\bR^2)$ is given by $\phi_t(x,y)=(x+ty,y)$. In the notation of Section \ref{rel_symplectic}, the function $\tchi\colon\bR\times\bR^2\to\bR$ determining the lift of the 
symplectic representation $\phi_t$ to a contact representation $\tphi\colon\bR\to\Aut(\bR\cext_\varphi\bR^2)$ is $\tchi_t(x,y)=t\frac{y^2}{2}$, which verifies properties \eqref{Chi-Tilde-1}-\eqref{Chi-Tilde-3}. Thus 
$G=(\bR\cext_\varphi\bR^2)\rtimes_{\tphi}\bR$ is the lcs extension of $S$ by $\varphi$ and $\phi$.
Concerning the lattice, we consider $\bZ\times 2\bZ\subset\bR^2$; one sees that $\varphi|_{\bZ\times 2\bZ}$ takes values in $2\bZ$. Then $\Xi=(2\bZ\cext_\varphi(\bZ\times 2\bZ))\rtimes_{\tphi}\bZ\subset G$ is a lattice.

Let us now move to the examples of Section \ref{Dimension:6}. In both of them, $H=\bR\cext_\varphi S$ where:
\begin{itemize}
 \item $S$ is the 4-dimensional nilpotent Lie group with multiplication
 \[
  (x,y,z,t)\cdot(x',y',z',t')=(x+x',y+y',z+z'+xx',t+t'+yx')
 \]
\item $\varphi\colon S\times S\to\bR$ is the 2-cocycle
\[
 \varphi((x,y,z,t),(x',y',z',t'))=yz'+tx'.
\]
\end{itemize}

First of all, we describe $S$ as a symplectic double extension of $S_1=\bR^2$. We consider $\bR^2$ with global coordinates $(x,z)$ and symplectic form $\sigma_1=dx\wedge dz$. The data associated to the symplectic 
double extension of $\fs_1=\bR^2$ are, in the notation of Section \ref{sym-Lie-alg-lcs-Lie}, the vector $Z_1= (0, 1)$ and the trivial derivation $D_{\fs_1}\colon\bR^2\to\bR^2$. Hence 
$\fs=(\bR\oplus\bR^2)\rtimes_{(-i_{Z_1}\sigma_1,0)}\bR$. As for the group structure of Section 
\ref{symp_ext_symp_lcs_groups}, we see that the 2-cocycle $(\varphi_1,\phi_1)$ is trivial, hence $S=(\bR\times\bR^2)\rtimes_{(\chi,\Id)}\bR$, where $\chi_y(x,z)=yx$, which verifies \eqref{double-symp-1}, \eqref{double-symp-2} 
and \eqref{double-symp-3}.

In the first example we chose the trivial symplectic representation on $S$ and lifted 
it to the trivial contact representation on $H=\bR\cext_\varphi S$. Hence, the nilpotent Lie group we work with is $G=(\bR\cext_\varphi S)\times\bR$. 
Concerning the lattice, we start with $\bZ^2\subset \bR^2$; then $\Gamma_S=(\bZ\times\bZ^2)\rtimes_{(\chi,\Id)}\bZ \subset S$ is a lattice.
The lattice we take in $G$ is $(\bZ\cext_\varphi \Gamma_S)\times \bZ$.

In the second example, the symplectic representation $\phi\colon\bR\to\Aut(S)$ is non-trivial and given by 
$\phi_s(x,y,z,t)=(x,y+sx,z+sy+\frac{1}{2}s^2x,t+sz+\frac{1}{2}s^2y+\frac{1}{6}s^3x)$; the function $\tchi\colon\bR\times S\to\bR$ which determines the lift of $\phi$ to a contact representation 
$\tphi\colon\bR\to\Aut(\bR\cext_\varphi S)$ is 
\[
\tchi_s(x,y,z,t)=s\left(xz+\frac{1}{2}y^2-\frac{1}{3}x^3\right)+s^2xy+\frac{1}{3}s^3x^2
\]
which verifies properties \eqref{Chi-Tilde-1}, \eqref{Chi-Tilde-2} and \eqref{Chi-Tilde-3}. For the lattice, we start with $\bZ\times 6\bZ\times \bZ\subset\bR\times \bR^2$
and $\Lambda_S=(\bZ\times 6\bZ\times \bZ)\rtimes_{(\chi,\Id)}2\bZ\subset S$; the restriction of $\varphi$ to such subgroup takes values in $2\bZ$. Then
$(2\bZ\cext_\varphi\Lambda_S)\rtimes_{\tphi}\bZ$ is the desired lattice.

Finally, we consider the example of Section \ref{Dimension:higher}. There one starts with the nilpotent Lie group $S_{2n-2}$ of dimension $2n-2$ ($n\geq 4$), whose group operation is given by
\[
\begin{array}{c}
(x_1, y_1, \dots, x_{n-1}, y_{n-1})\cdot(x'_1, y'_1, \dots, x'_{n-1}, y'_{n-1}) \\
= (x_1 + x'_1, y_1 + y'_1 + x_{n-1}x'_{n-1}, x_2 + x'_2, y_2 + y'_2, \dots, x_{n-2} + x'_{n-2}, y_{n-2} + y'_{n-2}, \\
x_{n-1} + x'_{n-1}, y_{n-1} + y'_{n-1} + x_1x'_{n-1})
\end{array}
\]
in terms of a global system of coordinates $(x_1, y_1, \dots, x_{n-1}, y_{n-1})$. The 2-cocycle $\varphi\colon S_{2n-2}\times S_{2n-2}\to\bR$ is given by
\[
 \varphi((x_1, y_1, \dots, x_{n-1}, y_{n-1}),(x'_1, y'_1, \dots, x'_{n-1}, y'_{n-1}))=\sum_{j=1}^{n-2}x_jy'_j+y_{n-1}x'_{n-1}
\]
and $H_{2n-1}=\bR\cext_\varphi S_{2n-2}$. 

We describe $S_{2n-2}$ as a symplectic double extension; in the notation of Section \ref{Dimension:higher}, we consider 
the basis $\{\alpha_1,\ldots,\alpha_{n-1},\beta_1,\ldots,\beta_{n-1}\}$ of $\fs^*_{2n-2}$ with structure equations
\[
 d\alpha_i=0, \ i=1,\ldots,n-1, \quad d\beta_i=0, \ i=1,\ldots,n-2 \quad \mathrm{and} \quad d\beta_{n-1}=\alpha_1\wedge\alpha_{n-1}.
\]
Hence $\fs_{2n-2}$ is the direct sum of the abelian Lie algebra $\bR^{2n-6}$ with a 4-dimensional Lie algebra, whose dual is spanned by $\{\alpha_1,\alpha_{n-1},\beta_1,\beta_{n-1}\}$, which is the direct sum of 
the 3-dimensional Heisenberg algebra and a 1-dimensional factor. We explained in the discussion of the 6-dimensional examples how this 4-dimensional Lie algebra can be described as a symplectic double extension of $\bR^2$.

In this case as well we have chosen the trivial symplectic representation on $S_{2n-2}$ and lifted it to the trivial contact representation on $H_{2n-1}$. Hence $G=(\bR\cext_\varphi S_{2n-2})\times\bR$ in this case. 
There is a lattice $\Xi_{2n-2}\subset S_{2n-2}$, consisting of points with integer coordinates. $\varphi$ restricted to $\Xi_{2n-2}\times\Xi_{2n-2}$ takes values into $\bZ$, hence 
$\Gamma_{2n-1}=\bZ\cext_\varphi\Xi_{2n-2}\subset H_{2n-1}$ is a lattice. The lattice in $G$ is $(\bZ\cext_\varphi\Xi_{2n-2})\times\bZ$.

\section*{Acknowledgements}
This work was partially supported by SFB 701 - Spectral Structures and
Topological Methods in Mathematics (Universit\"at Bielefeld) (G.\ B.) and MICINN (Spain) grant MTM2012-34478 (J.\ C.\ M.). The authors would like to thank 
Daniele Angella (Universit\`a di Firenze, Italy), Marisa Fern\'andez (EHU, Spain), Dieter Kotschick (LMU, Germany) and Luis Ugarte (Universidad de Zaragoza, Spain) for discussion on locally conformal symplectic structures, 
David Mart{\'\i}nez Torres (PUC-Rio, Brazil) for his comments on equivariant versions of Martinet's result in contact geometry and 
Jesus Alvarez L\'opez (USC, Spain) for his comments on foliations of codimension one.

\end{document}